\documentclass[12pt,twoside]{amsart}
\usepackage{amssymb}
\usepackage{verbatim}
\usepackage{amsmath}
\usepackage{bm}
\usepackage{a4wide}
\usepackage[latin1]{inputenc}
\usepackage[T1]{fontenc}
\usepackage{times}
\usepackage{latexsym}
\usepackage{enumerate}
\usepackage{xcolor}
\usepackage[
  colorlinks=true,
  linkcolor=blue,
  citecolor=blue,
  urlcolor=blue
]{hyperref}

\makeatletter
\newcommand{\sumprime}{\if@display\sideset{}{'}\sum%
            \else\sum'\fi}
\makeatother

\begin{document}

\numberwithin{equation}{section}

\newtheorem{theorem}{Theorem}[section]
\newtheorem{proposition}[theorem]{Proposition}
\newtheorem{conjecture}[theorem]{Conjecture}
\def\theconjecture{\unskip}
\newtheorem{corollary}[theorem]{Corollary}
\newtheorem{lemma}[theorem]{Lemma}
\newtheorem{observation}[theorem]{Observation}
\newtheorem{definition}{Definition}
\numberwithin{definition}{section} 
\theoremstyle{remark}
\newtheorem{remark}[theorem]{Remark}
\newtheorem{kl}{Key Lemma}
\def\thekl{\unskip}
\newtheorem{question}{Question}
\def\thequestion{\unskip}
\newtheorem{example}{Example}
\def\theexample{\unskip}
\newtheorem{problem}{Problem}

\thanks{The first author is partially supported by the DFG funded projects
SFB TRR 191 `Symplectic Structures in Geometry, Algebra and Dynamics'
(Project-ID 281071066\,--\,TRR 191), DFG Priority Program 2265 `Random
Geometric Systems' (Project-ID 422743078), and the ANR--DFG project
`QuasiDy\,--\,Quantization, Singularities, and Holomorphic Dynamics'
(Project-ID 490843120).}
\thanks{The second author is supported by the National Natural Science Foundation of Sichuan, No. 2025ZNSFSC0796, the National Natural Science Foundation of China, No. 12401103, and the Fundamental Research Fund for the Central Universities, Southwest Minzu University, No. ZYN2025007.}

\title{Parameter Dependence of Weighted Bergman Kernels Beyond Smoothness}
 \author[George Marinescu]{George Marinescu}
 \author[Xu Xing]{Xu Xing}
  
\begin{abstract}
We study the parameter dependence of weighted Bergman kernels on fixed
bounded domains in $\mathbb C^n$. Our main result establishes
real-analytic dependence on $t\in(-1,\infty)$ for the kernels associated
with $\delta^t\,dV$ on bounded pseudoconvex domains with $C^2$ boundary,
where $\delta$ is the Euclidean distance to the boundary.
The parameter derivatives satisfy factorial estimates in
$C^\ell(S\times S)$ for every $S\Subset\Omega$ and $\ell\ge0$,
uniformly on compact parameter intervals.
The proof combines weighted $L^2$ estimates for $\bar\partial$ with a
holomorphic family of bounded operators and gives a local holomorphic
extension with values in a fixed weighted Bergman space.
For weight families smooth jointly in space and parameter up to the
boundary, we also express all parameter derivatives in terms of
iterated weighted Bergman projections and complete exponential Bell
polynomials. This formula implies preservation of the Gevrey class
$G^s$, $s\ge1$, under uniform parameter estimates up to the boundary.
We show that the same derivative formula holds for the weights
$-t\log\delta$.
 \bigskip

 \noindent{{\sc Mathematics Subject Classification} (2020): 32A10, 32A36.}

  \smallskip

  \noindent{{\sc Keywords}: weighted Bergman kernel, parameter dependence, Gevrey regularity, real analyticity}
\end{abstract}
\address[George Marinescu]{Department of Mathematics and Computer Science,
\newline\mbox{\quad}\,University of Cologne, Cologne, 50931, Germany
\newline\mbox{\quad}\,Institute of Mathematics `Simion Stoilow', 
Romanian Academy, Bucharest, Romania}
\email{gmarines@math.uni-koeln.de}
\address[Xu Xing] {School of Mathematics, Southwest Minzu University, Chengdu, 610041, China,
\newline\mbox{\quad}\,Department of Mathematics and Computer Science,
University of Cologne, Cologne, 50931, 
\newline\mbox{\quad}\,Germany
}
\email{xingxu@swun.edu.cn}

\maketitle

\section{Introduction}
Let $\Omega$ be a bounded domain in $\mathbb C^n$, let
$D\subset\mathbb R$ be open, and let $\varphi_t$ be a real-valued
measurable function on $\Omega$ for each $t\in D$.
Define
\[
A^2(\Omega,\varphi_t)
:=\left\{f\in\mathcal O(\Omega):
\|f\|_{\varphi_t}^2
:=\int_\Omega|f|^2e^{-\varphi_t}\,dV<\infty\right\},
\]
where $dV$ denotes Lebesgue measure. We call $\varphi_t$
admissible if the density $e^{-\varphi_t}$ is an admissible
weight, that is, $A^2(\Omega,\varphi_t)$ is closed in
$L^2(\Omega,\varphi_t)$ and every point evaluation is
continuous on it.
For an admissible weight, we denote the reproducing kernel and
the orthogonal Bergman projection by
$K_{\Omega,\varphi_t}$ and $P_{\Omega,\varphi_t}$,
respectively.

Our main result concerns $\varphi_t=-t\log\delta$, $t>-1$, where
$\delta=\delta_\Omega$ is the Euclidean distance to $\partial\Omega$.
On every bounded pseudoconvex domain with $C^2$ boundary, we prove
real-analytic dependence on $t$ by extending each kernel section
holomorphically in the parameter with values in a fixed weighted
Bergman space. Parameter Cauchy estimates, followed by interior
holomorphic estimates, give bounds locally uniform in the spatial
variables and for all spatial derivatives.
For the dependence of unweighted Bergman kernels on varying domains,
we refer to
\cite{Ramadanov67,Hamilton77,Hamilton79,GreeneKrantz82,Komatsu82,Chen16}.

For weights on a fixed domain, Pasternak-Winiarski proved
real-analytic dependence of the Bergman projection and reproducing
kernel under bounded $L^\infty$ perturbations of an admissible
weight \cite{PasternakWiniarski90}. He also obtained higher
differential and Taylor formulas for their dependence on the weight.
The boundary-distance family is not a bounded perturbation of any
fixed member of the family: if $t\ne t_0$, then
\[
\varphi_t-\varphi_{t_0}=-(t-t_0)\log\delta
\]
is unbounded near $\partial\Omega$. Equivalently, the quotient of
the densities is $\delta^{t-t_0}$, and either this quotient or its
reciprocal is unbounded near the boundary. Thus the densities
$\delta^t$ and $\delta^{t_0}$ are not uniformly comparable, so the
bounded-perturbation result of \cite{PasternakWiniarski90} does not
apply directly.
Without assuming parameter differentiability, H\"older estimates for diagonal Bergman kernels with plurisubharmonic weights were obtained under uniform H\"older control of $e^{\varphi_t}$ and suitable singularity assumptions \cite[Theorem~1.3]{Chen16}.

We begin with an explicit scalar-parameter differentiation formula
for weights smooth up to the boundary. Its formulation in terms of
iterated projections will also be used for the boundary-distance
family.
Since no boundary regularity is assumed in the next two theorems,
$\varphi\in C^\infty(\overline\Omega\times D)$ means that
$\varphi$ extends smoothly to a neighborhood of
$\overline\Omega\times J$ for every compact interval $J\Subset D$.
In particular, its derivatives are uniformly bounded on
$\overline\Omega\times J$.
\begin{theorem}\label{th:formula-1}
Let $\Omega$ be a bounded domain in $\mathbb C^n$ and let
$D\subset\mathbb R$ be open. Suppose that $\varphi$ is real-valued
and belongs to $C^\infty(\overline\Omega\times D)$, and set
$\varphi_t=\varphi(\cdot,t)$. Then
$K_{\varphi_t}(z,w)$ depends smoothly on $t$. Moreover,
for every $(z,w,t)\in\Omega\times\Omega\times D$,
\begin{equation}\label{fo:derivation-1}
\partial_t^mK_{\varphi_t}(z,w)
=
\sum_{\alpha\in\Lambda_m}
\frac{m!}{\alpha!}
P_{\varphi_t,\alpha}
\bigl(K_{\varphi_t}(\cdot,w)\bigr)(z),
\ \ \  m=1,2,\ldots.
\end{equation}
\end{theorem}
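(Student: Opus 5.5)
The plan is to compare the kernels at two nearby parameter values through a resolvent-type identity, expand it as a Neumann series in powers of $u=s-t$, and read off the Taylor coefficients. Throughout, I read $\Lambda_m$ as the set of compositions $\alpha=(\alpha_1,\dots,\alpha_k)$ of $m$, with $\alpha_i\ge1$, $\sum\alpha_i=m$ and $\alpha!=\prod\alpha_i!$. I read $P_{\varphi_t,\alpha}$ as the iterated operator
\[
P_{\varphi_t,\alpha}f=P_{\varphi_t}\bigl(g_{\alpha_1}P_{\varphi_t}(g_{\alpha_2}\cdots P_{\varphi_t}(g_{\alpha_k}f))\bigr),
\qquad
g_j:=-\partial_s^j e^{\varphi_t-\varphi_s}\big|_{s=t}.
\]
Here $g_j$ is, up to sign, the complete exponential Bell polynomial in $-\partial_t\varphi_t,\dots,-\partial_t^j\varphi_t$. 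If the paper's normalization differs, the same argument should adapt with only bookkeeping changes.

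\textbf{Step 1 (fixed Hilbert space).} Fix $t\in D$ and a compact interval $J\ni t$. Since $\varphi$ is bounded on $\overline\Omega\times J$, every space $A^2(\Omega,\varphi_s)$ with $s\in J$ is the same space $A^2(\Omega)$ as a set, with uniformly equivalent norms. In particular all these weights are admissible.

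\textbf{Step 2 (the identity).} Using only the reproducing property and $K_s(z,w)=\overline{K_s(w,z)}$, one checks
\[
K_{\varphi_t}(z,w)=P_{\varphi_t}\bigl[e^{\varphi_t-\varphi_s}K_{\varphi_s}(\cdot,w)\bigr](z).
\]
Since $P_{\varphi_t}K_{\varphi_s}(\cdot,w)=K_{\varphi_s}(\cdot,w)$, this rearranges to
\[
K_{\varphi_s}(\cdot,w)=K_{\varphi_t}(\cdot,w)+T_sK_{\varphi_s}(\cdot,w),
\qquad T_s:=P_{\varphi_t}\circ M_{1-e^{\varphi_t-\varphi_s}},
\]
where $M_h$ denotes multiplication by $h$. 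As an operator on $A^2(\Omega,\varphi_t)$ we have $\|T_s\|\le\sup_\Omega|1-e^{\varphi_t-\varphi_s}|=O(|s-t|)$. Hence, for $|s-t|$ small,
\[
K_{\varphi_s}(\cdot,w)=\sum_{k\ge0}T_s^kK_{\varphi_t}(\cdot,w)
\]
in $A^2(\Omega,\varphi_t)$.

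\textbf{Step 3 (expansion in $u$).} By Taylor's theorem in $s$, uniformly on $\overline\Omega$,
\[
1-e^{\varphi_t-\varphi_s}=\sum_{j=1}^{m}g_j\frac{u^j}{j!}+R_m(\cdot,s),
\qquad \sup_\Omega|R_m(\cdot,s)|=O(|u|^{m+1}).
\]
This uses that all $t$-derivatives of $\varphi$ are bounded on $\overline\Omega\times J$. Substituting this into the Neumann series, truncating at $k\le m$ (the tail is $O(|u|^{m+1})$ in operator norm), and collecting the coefficient of $u^m$ gives
\[
K_{\varphi_s}(\cdot,w)=\sum_{j=0}^m\frac{u^j}{j!}\sum_{\alpha\in\Lambda_j}\frac{j!}{\alpha!}P_{\varphi_t,\alpha}K_{\varphi_t}(\cdot,w)+O(|u|^{m+1}),
\]
with the error measured in $A^2$ norm. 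The combinatorics is that a product $\prod_i g_{\alpha_i}u^{\alpha_i}/\alpha_i!$ arises exactly from the ordered choice of the composition $\alpha$.

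\textbf{Step 4 (pointwise conclusion).} Point evaluation at $z$ is continuous on $A^2$, uniformly for $z$ in compacts, by the Bergman inequality. So the expansion above holds pointwise, and $s\mapsto K_{\varphi_s}(z,w)$ has a Taylor expansion of every order at each $t$.

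The main obstacle is upgrading "Taylor expansion of every order at each point" to genuine $C^\infty$ dependence with the stated derivatives. Pointwise Peano expansions alone do not give this. To handle it, I will show that the coefficient maps $t\mapsto P_{\varphi_t,\alpha}K_{\varphi_t}(\cdot,w)$ are continuous in $A^2$. This follows from Step 2 itself: $K_{\varphi_s}\to K_{\varphi_t}$ in norm, and $P_{\varphi_s}\to P_{\varphi_t}$ in operator norm on the common space, since $P_{\varphi_s}$ is the orthogonal projection for an equivalent inner product that depends continuously on $s$. Then I argue by induction on $m$. Once the first-order expansion holds at every $t$ with coefficient continuous in $t$, the function is $C^1$ with derivative equal to that coefficient. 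I then apply the same identity to the coefficient functions, which are built from the same ingredients, to differentiate again; this reproduces the Bell-polynomial recursion. The remaining care is in verifying that the differentiated iterated projections regroup exactly into \eqref{fo:derivation-1}.
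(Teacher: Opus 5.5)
Your argument is correct, but it follows a different route from the paper.

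The paper works by hand. Difference-quotient estimates give Lipschitz continuity, then $C^1$ and $C^2$. An induction then proves the Leibniz-type integral formula \eqref{fo:Leibniz-1}, together with norm convergence of the difference quotients. The commutation identity of Lemma~\ref{le:commutation-1}, applied with cutoffs $\chi_k$, turns those integrals into iterated projections. Finally, a regrouping by a distinguished position $j$ produces the coefficient $\sum_j p!\gamma_j/\gamma!=(p+1)!/\gamma!$.

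Your resolvent identity $K_{\varphi_s}(\cdot,w)=(\mathrm{Id}-T_s)^{-1}K_{\varphi_t}(\cdot,w)$ does all of this at once. The sum over ordered compositions with weights $1/\alpha!$ comes straight out of the geometric series, so you need neither the commutation lemma nor the regrouping. Your ordering of the factors is the reverse of the paper's, which is harmless because $\alpha\mapsto\alpha^-$ preserves $\alpha!$. This is essentially the bounded-perturbation mechanism behind Pasternak-Winiarski's theorem, which the paper cites.

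The obstacle you flag closes more simply than by your induction. All $t$-derivatives of $\varphi$ are bounded on $\overline\Omega\times J$, so $s\mapsto 1-e^{\varphi_t-\varphi_s}$ is $C^\infty$ into $L^\infty(\Omega)$, and hence $s\mapsto T_s$ is $C^\infty$ in operator norm. Inversion is analytic, so $s\mapsto K_{\varphi_s}(\cdot,w)$ is $C^\infty$ in norm near $t$. Uniqueness of Taylor coefficients then identifies its derivatives with $m!$ times your Step~3 coefficients. Alternatively, your remainder is $O(|u|^{m+1})$ uniformly in the base point, so the converse of Taylor's theorem applies. If you do run the induction, it works via $\partial_tP_{\varphi_t}=P_{\varphi_t}M_{g_1}P_{\varphi_t}-P_{\varphi_t}M_{g_1}$ and $\partial_tg_j=g_{j+1}+g_1g_j$, and it reproduces the paper's count.

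What the paper's route buys is portability. For $\varphi_t=-t\log\delta$ one has $1-e^{\varphi_t-\varphi_s}=1-\delta^{s-t}$. Its sup norm is at least $1$ for $s>t$ and infinite for $s<t$, so $T_s$ is not a small perturbation and no Neumann series in a fixed space is available. Section~\ref{sec:logarithmic} reuses the Leibniz, commutation, and regrouping argument almost verbatim. There, norm smoothness is supplied instead by the complexification of Section~\ref{sec:analyticity}.
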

Here $\Lambda_m$ is the set of ordered compositions
$\alpha=(\alpha_1,\ldots,\alpha_r)$ of $m$ into positive integers,
$\alpha!:=\prod_{j=1}^r\alpha_j!$, and the operators
$P_{\varphi_t,\alpha}$ are defined in Section~2 using complete
exponential Bell polynomials.
To state the resulting derivative estimates, we recall the
definition of Gevrey regularity.

\begin{definition}\label{de:Gevrey-s}
Let $D\subset\mathbb R^d$ be open and let $s\ge1$.
A function $f\in C^\infty(D)$ belongs to the Gevrey class $G^s(D)$ if,
for every compact set $K\Subset D$, there exist constants $C_K,R_K>0$
such that
\begin{equation}\label{eq:Gevrey-definition}
\sup_{x\in K}|\partial^\mu f(x)|
\le C_K R_K^{|\mu|}(|\mu|!)^s
\end{equation}
for every multi-index $\mu\in\mathbb N_0^d$.
In particular, $G^1(D)$ is the class of real-analytic functions.
\end{definition}

For open sets $V\subset\mathbb R^{d_1}$ and
$D\subset\mathbb R^{d_2}$, we write
$f\in G_y^s(V\times D)$ if $f\in C^\infty(V\times D)$ and, for every
compact set $K\Subset V\times D$, there exist $C_K,R_K>0$ such that
\begin{equation}\label{eq:partial-Gevrey}
\sup_{(x,y)\in K}|\partial_y^\mu f(x,y)|
\le C_K R_K^{|\mu|}(|\mu|!)^s
\end{equation}
for every $\mu\in\mathbb N_0^{d_2}$.
For a function smooth up to $\partial\Omega$, the notation
$G_t^s(\overline\Omega\times D)$ means that these parameter estimates
hold uniformly on $\overline\Omega\times J$ for every compact
$J\Subset D$.

Theorem~\ref{th:formula-1} yields the following preservation of Gevrey
regularity. This consequence also follows from
Pasternak-Winiarski's analytic dependence theorem by composition
with a Gevrey curve in $L^\infty(\Omega)$.
\begin{theorem}\label{th:Bounded-weight}
Let $\Omega$ be a bounded domain in $\mathbb C^n$ and let
$D\subset\mathbb R$ be open. Suppose that $\varphi$ is real-valued
and
\[
\varphi\in
C^\infty(\overline\Omega\times D)
\cap
G_t^s(\overline\Omega\times D)
\]
for some $s\ge1$. Then
\[
K_{\varphi_t}(z,w)
\in
G_t^s(\Omega\times\Omega\times D).
\]
\end{theorem}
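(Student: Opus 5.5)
We will derive the Gevrey estimates directly from the derivative formula \eqref{fo:derivation-1} of Theorem~\ref{th:formula-1}. We need the structure of $P_{\varphi_t,\alpha}$, which Section~2 builds from complete exponential Bell polynomials. We expect $P_{\varphi_t,\alpha}$ for $\alpha=(\alpha_1,\dots,\alpha_r)$ to be a composition of the form $P_{\varphi_t}M_{\alpha_1}P_{\varphi_t}M_{\alpha_2}\cdots P_{\varphi_t}M_{\alpha_r}$, up to signs. Here $M_k$ is multiplication by $\alpha_k!^{-1}$ times the Bell polynomial $B_k(-\dot\varphi_t,\dots,-\varphi_t^{(k)})$, i.e.\ by $e^{\varphi_t}\partial_t^k e^{-\varphi_t}$. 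Since $P_{\varphi_t}$ has norm $1$ on $L^2(\Omega,\varphi_t)$, it suffices to bound the multipliers in $L^\infty(\Omega)$ uniformly for $t$ in a compact $J\Subset D$, and then count compositions.

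\textbf{Step 1 (Bell polynomial bound).} Assume $\sup_{\overline\Omega\times J}|\partial_t^k\varphi|\le CR^k(k!)^s$. By Fa\`a di Bruno (or Gevrey stability under composition with $\exp$, valid for $s\ge1$), $|e^{\varphi}\partial_t^k e^{-\varphi}|\le C_1R_1^k(k!)^s$ on $\overline\Omega\times J$. A direct route uses the generating function: the Bell polynomials are the Taylor coefficients of $\exp(\sum_j x_j\tau^j/j!)$, and majorizing $x_j$ by $CR^j(j!)^s$ gives the bound, since $(j!)^s(l!)^s\le((j+l)!)^s$.

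\textbf{Step 2 (norm of the kernel term).} Fix $w$ in a compact $S\Subset\Omega$. Then $\|K_{\varphi_t}(\cdot,w)\|_{\varphi_t}^2=K_{\varphi_t}(w,w)$, which is bounded uniformly in $t\in J$ and $w\in S$ because the weights $e^{-\varphi_t}$ are uniformly comparable to $dV$ (the $\varphi_t$ are uniformly bounded). Hence
\[
\|P_{\varphi_t,\alpha}K_{\varphi_t}(\cdot,w)\|_{\varphi_t}\le C_2\prod_{j}C_1R_1^{\alpha_j}(\alpha_j!)^{s-1}\cdot\alpha!,
\]
with the $\alpha!$ normalization matched to the definition. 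This step requires care with exactly how the $1/\alpha!$ factors enter.

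\textbf{Step 3 (summation).} Combining with the factor $m!/\alpha!$, the term indexed by $\alpha$ is bounded by $m!\,C_1^{r}R_1^m\prod_j(\alpha_j!)^{s-1}\le C_1^{r}R_1^m(m!)^s$. The number of compositions of $m$ with $r$ parts is $\binom{m-1}{r-1}$, so the sum over $\alpha\in\Lambda_m$ is at most $(1+C_1)^mR_1^m(m!)^s$.

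\textbf{Step 4 (pointwise bound).} The resulting function $\partial_t^mK_{\varphi_t}(\cdot,w)$ lies in $A^2(\Omega,\varphi_t)$. For $z\in S$, the pointwise value is controlled by $|f(z)|\le K_{\varphi_t}(z,z)^{1/2}\|f\|_{\varphi_t}$, which is uniform on $S\times J$. This yields $\sup_{S\times S\times J}|\partial_t^mK|\le C R'^m(m!)^s$, i.e.\ membership in $G^s_t$.

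\textbf{Main obstacle.} The crux is Step 1 together with the combinatorial bookkeeping of Steps 2--3: we must verify that the precise normalization of $P_{\varphi_t,\alpha}$ in Section~2 produces only geometric growth $C^r$ beyond $(m!)^s$, rather than an extra factorial. We would first write the Bell-polynomial multiplier explicitly and check the case $s=1$ via generating functions, where the estimates become a convergent majorant series in $\tau$.
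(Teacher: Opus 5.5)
Your proposal is correct and is essentially the paper's argument: bound the Bell multipliers by \eqref{eq:Bell-polynomial}, then iterate the contractivity of $P_{\varphi_t}$ as in Lemma~\ref{le:Openness-1} to get $\|P_{\varphi_t,\alpha}K_{\varphi_t}(\cdot,w)\|_{\varphi_t}\le C R_0^{|\alpha|}(\alpha!)^s$. In the paper $P_{\varphi_t,k}$ carries no $1/k!$, so your $(\alpha!)^s$ form is the right one, and the $1/\alpha!$ enters only through the coefficient $m!/\alpha!$. The proof then sums over the $2^{m-1}$ compositions and finishes with point evaluation and the uniform equivalence of the weighted norms on compact parameter intervals. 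The one item you leave implicit is that membership in $G_t^s$ also requires joint $C^\infty$ regularity in $(z,w,t)$; the paper obtains this from the norm-continuity in $t$ of the parameter derivatives together with interior Cauchy estimates.
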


Uniform control up to the boundary cannot simply be replaced by
interior parameter regularity, even when the weights are uniformly
bounded. On the unit disc $\mathbb D$, consider
\[
e^{-\varphi_t(z)}
=1+\frac{t^2}{(1-|z|^2)^2+t^2}.
\]
Then $-\log2\le\varphi_t\le0$,
$\varphi\in C^\infty(\mathbb D\times\mathbb R)
\cap G_t^1(\mathbb D\times\mathbb R)$, and, for each fixed $t$,
$\varphi_t$ extends smoothly to $\overline{\mathbb D}$.
Radial symmetry gives
\[
K_{\varphi_t}(0,0)
=\left(\int_{\mathbb D}e^{-\varphi_t}\,dA\right)^{-1}
=
\begin{cases}
\displaystyle\frac{1}{\pi\bigl(1+|t|\arctan(1/|t|)\bigr)},&t\ne0,\\[3mm]
\displaystyle\frac1\pi,&t=0.
\end{cases}
\]
Consequently,
$K_{\varphi_t}(0,0)=\pi^{-1}-\tfrac12|t|+O(t^2)$ as $t\to0$,
so the kernel is not differentiable at $0$.
This example leads to the following question.
\begin{question}\label{qu:boundary-smoothness}
Under what boundary control does prescribed Gevrey regularity of
$t\mapsto\varphi_t$ imply the same Gevrey regularity of
$K_{\varphi_t}$, locally uniformly in the spatial variables?
In particular, when does this implication hold for
boundary-degenerate densities?
\end{question}

We now turn to boundary-distance weights as an important test case for this
question. Set $\delta=\delta_\Omega$ and consider
$\varphi_t=-t\log\delta$, $t>-1$.
We write $A^2_t(\Omega)$, $K_t$, and $P_t$ for the corresponding
Bergman space, kernel, and projection, and put
\[
\|f\|_t^2:=\int_\Omega|f|^2\delta^t\,dV,\ \ \ 
\langle f,g\rangle_t:=\int_\Omega f\overline g\,\delta^t\,dV.
\]
We abbreviate $K_t(w,w)$ to $K_t(w)$ and
$K_{\varphi_t}(w,w)$ to $K_{\varphi_t}(w)$. For $t\ne0$, the weight $-t\log\delta$ is unbounded near
$\partial\Omega$.
For a bounded $C^2$ domain, the constant function belongs to
$A^2_t(\Omega)$ when $t>-1$. For $t\le-1$, one has
$A^2_t(\Omega)=\{0\}$ on every bounded domain
\cite[Theorem~1.9(1)]{ChenBergmanSpace}.

We use the H\"ormander-type weighted $L^2$ existence theorem in
\cite{ChenBergmanSpace} to obtain boundedness of the Bergman projection
on nearby weighted spaces. A holomorphic conjugation argument then
gives a local extension of the kernel sections with values in a
fixed weighted Bergman space. The resulting norm control yields
the analytic estimates and justifies the derivative formula in the
following theorem.

\begin{theorem}\label{th:Gevrey-1}
Let $\Omega$ be a bounded pseudoconvex domain in $\mathbb C^n$ with
$C^2$ boundary.
For every compact interval $J\Subset(-1,\infty)$, there exists
$R_J>0$ such that, for every $S\Subset\Omega$ and
$\ell\in\mathbb N_0$,
\begin{equation}\label{eq:analytic-main}
\sup_{t\in J}\|\partial_t^mK_t\|_{C^\ell(S\times S)}
\le C_{J,S,\ell}R_J^{-m}m!,\ \ \ m\in\mathbb N_0.
\end{equation}
In particular,
$K_t\in G_t^1\bigl(\Omega\times\Omega\times(-1,\infty)\bigr)$:
the dependence on $t$ is real analytic, locally uniformly together
with all spatial derivatives. The singular-weight derivative formula
\eqref{fo:derivation} also holds.
\end{theorem}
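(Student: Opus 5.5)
Fix $t_0\in J$ and write $t=t_0+\zeta$, aiming to extend $\zeta\mapsto K_{t_0+\zeta}(\cdot,w)$ holomorphically to a complex disc $|\zeta|<r$ with values in a fixed weighted Bergman space. The natural complex object is the operator family $T_\zeta f:=\delta^{\zeta}f$ (with $\delta^\zeta=e^{\zeta\log\delta}$). For complex $\zeta$ the multiplier $\delta^\zeta$ has modulus $\delta^{\operatorname{Re}\zeta}$ and is not holomorphic in the spatial variable. It is, however, holomorphic in $\zeta$ as a map into bounded operators $L^2(\Omega,\delta^{a})\to L^2(\Omega,\delta^{b})$ whenever $b+2\operatorname{Re}\zeta>a-1$ with a small margin. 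The starting point is the identity, valid for real $t$ near $t_0$,
\[
K_t(\cdot,w)=P_t\bigl(\delta^{t_0-t}K_{t_0}(\cdot,w)\bigr)\cdot\text{(normalizing functional)}\quad\text{or more directly}\quad \langle f,K_t(\cdot,w)\rangle_t=f(w),
\]
and I will construct $K_t$ as the solution of the reproducing equation in a fixed space $A^2_{t_0-\epsilon}(\Omega)$. Here $\epsilon>0$ is chosen so that $t_0-\epsilon>-1$.

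The first step is the key analytic input. Using the H\"ormander-type weighted $L^2$ existence theorem of \cite{ChenBergmanSpace} on bounded pseudoconvex $C^2$ domains, I show that for $|s-t_0|<\epsilon$ the projection $P_s$ extends to a bounded operator on $L^2(\Omega,\delta^{t_0-\epsilon})$, with norm uniform in $s\in J$. The argument runs as follows. For $u$ in the smaller space, write $P_s u=u-v$, where $v$ is the $L^2_s$-minimal solution of $\bar\partial v=\bar\partial u$. Then twist the weight by $-\log\delta$ (which is plurisubharmonic-like near the boundary: $-\log\delta$ is psh modulo a bounded correction on pseudoconvex $C^2$ domains) to control $v$ in $L^2(\delta^{t_0-\epsilon})$. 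This is the step I expect to be the main obstacle. One needs uniform constants for a Kohn-type estimate with weight $\delta^{s}\cdot\delta^{-\eta}$, together with duality to move between exponents, and the $C^2$ hypothesis enters exactly through the Diederich--Forn\ae ss/Oka-type behavior of $-\log\delta$.

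Second, given this uniform bound, I define for complex $\zeta$ the operator $Q_\zeta:=P_{t_0}\circ M_{\delta^{\zeta}}$ on $A^2_{t_0-\epsilon}$. I look for $k_\zeta\in A^2_{t_0-\epsilon}$ solving $P_{t_0}(\delta^{\zeta}k_\zeta)=K_{t_0}(\cdot,w)$. For real $\zeta$ this equation characterizes $K_{t_0+\zeta}(\cdot,w)$: indeed $\langle f,\delta^{\zeta}k\rangle_{t_0}=\langle f,k\rangle_{t_0+\zeta}$ for holomorphic $f$. At $\zeta=0$ the operator is the identity, and $\zeta\mapsto Q_\zeta$ is a holomorphic family of bounded operators for $|\zeta|<r$. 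Here $r$ depends only on $\epsilon$, hence only on $J$. It is invertible for $\zeta$ near $0$, or along the real segment by the real theory, which gives invertibility on a complex neighborhood of it. Then $\zeta\mapsto k_\zeta=Q_\zeta^{-1}K_{t_0}(\cdot,w)$ is holomorphic into $A^2_{t_0-\epsilon}$, with norm bounded by $C\|K_{t_0}(\cdot,w)\|$, and this is uniform for $w\in S$.

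Third, Cauchy estimates in $\zeta$ give $\|\partial_t^m K_t(\cdot,w)\|_{A^2_{t_0-\epsilon}}\le C r^{-m}m!$. The mean-value inequality on polydiscs then converts $L^2(\delta^{t_0-\epsilon})$ bounds into $C^\ell(S)$ bounds in $z$ (Cauchy inequalities). Conjugate symmetry $K_t(z,w)=\overline{K_t(w,z)}$, or holomorphy in $\bar w$ applied to the two-variable function, handles derivatives in $w$. A finite cover of $J$ finishes \eqref{eq:analytic-main}. Finally, expanding $Q_\zeta^{-1}$ in a Neumann/Taylor series and writing $\delta^\zeta=\sum\zeta^k(\log\delta)^k/k!$ yields the iterated-projection expression. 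Its coefficients are exactly the Bell-polynomial operators of Theorem~\ref{th:formula-1}, now with $\varphi_t=-t\log\delta$, which justifies \eqref{fo:derivation}.
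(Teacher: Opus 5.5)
\paragraph{The gap is in your Step~2.} Multiplication by $\delta^{\zeta}$ maps $L^2_a(\Omega)$ boundedly into $L^2_b(\Omega)$ only when $b+2\operatorname{Re}\zeta\ge a$, not when $b+2\operatorname{Re}\zeta>a-1$. So for $\operatorname{Re}\zeta<0$ the operator $Q_\zeta=P_{t_0}M_{\delta^\zeta}$ does not act on $A^2_{t_0-\epsilon}(\Omega)$; it only lands in $A^2_{t_0-\epsilon+2|\operatorname{Re}\zeta|}(\Omega)$. Where it is bounded, it is neither invertible nor norm-close to $\mathrm{Id}$.

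The unit disc with $\delta(z)=1-|z|$ already shows this. The weights are radial, so $Q_\zeta z^n=c_n(\zeta)z^n$, where $c_n(\zeta)$ is the ratio of $\int_0^1r^{2n+1}(1-r)^{t_0+\zeta}\,dr$ to $\int_0^1r^{2n+1}(1-r)^{t_0}\,dr$. Hence $c_n(\zeta)\sim\frac{\Gamma(t_0+\zeta+1)}{\Gamma(t_0+1)}(2n)^{-\zeta}$. Consequently:
\begin{itemize}
\item for $\operatorname{Re}\zeta<0$, $Q_\zeta$ is unbounded;
\item for $\operatorname{Re}\zeta>0$, $\|Q_\zeta-\mathrm{Id}\|\ge1$ and $Q_\zeta$ has no bounded inverse.
\end{itemize}
So there is no holomorphic family of bounded operators on a disc around $\zeta=0$ to which a Neumann series or a Cauchy formula could be applied. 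Using different source and target spaces does not help, because $Q_0$ then becomes a non-surjective inclusion. Your appeal to ``invertibility along the real segment by the real theory'' is also unfounded. The real theory gives existence and uniqueness of $K_{t_0+\zeta}(\cdot,w)$, not a bounded inverse of $Q_\zeta$, and the example shows that none exists for real $\zeta>0$. This is exactly the obstruction stressed in the paper's introduction: $\delta^t$ is not a bounded perturbation of $\delta^{t_0}$, so a one-sided multiplier cannot be handled perturbatively in a fixed space.

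\paragraph{The missing idea is to shift the weight symmetrically.} Set $A(\tau)=\delta^{(\tau-t_0)/2}P_{t_0}\delta^{(t_0-\tau)/2}$. The two multipliers are isometries between $L^2_{t_0+\sigma}$ and $L^2_{t_0+\sigma\pm(\operatorname{Re}\tau-t_0)}$. Hence $A(\tau)$ is uniformly bounded on a fixed space for $\tau$ in a full complex strip, provided $P_{t_0}$ is bounded on $L^2_{t_0-\eta}$ \emph{and} on $L^2_{t_0+\eta}$ (Theorem~\ref{th:Projection} plus duality, Lemma~\ref{le:ra-extension}). Uniform bounds together with weak holomorphy give norm holomorphy and $\|A(\tau)-P_{t_0}\|=O(|\tau-t_0|)$.

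For real $t$, $A(t)$ is a non-orthogonal idempotent onto $\delta^{(t-t_0)/2}A^2_t$. The orthogonal projection onto that range is $A(A+A^*-\mathrm{Id})^{-1}$ (Lemma~\ref{le:ra-projection-identity}). Here $A(t)^*$ continues holomorphically as $\delta^{(t_0-\tau)/2}P_{t_0}\delta^{(\tau-t_0)/2}$, and the bracket at $t_0$ is the involution $2P_{t_0}-\mathrm{Id}$, so a Neumann series now does apply. Applying this to $\delta^{-(\tau+t_0)/2}f(\cdot-w)$ with $f$ compactly supported, and multiplying back by $\delta^{(t_0-\tau)/2}$, gives the continuation in $A^2_{t_0-\eta_1}$.

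\paragraph{Step~1 also needs repair.} Taking $\bar\partial u$ for an arbitrary $u\in L^2$ gives nothing to estimate. You need the Berndtsson--Charpentier device: replace $f$ by $h=e^{-\lambda\psi}P_{t,\lambda\psi}(e^{\lambda\psi}f)$, for which $\bar\partial h=-\lambda h\bar\partial\psi$. Take $\psi=-\log(-\rho+\varepsilon)$ with $\rho$ a Diederich--Forn\ae ss function, since $-\log\delta$ itself need not satisfy \eqref{eq:DF}.

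\paragraph{What survives.} Once holomorphy in a fixed space is established, your Steps~3--4 are sound and partly simpler than the paper's argument.
\begin{itemize}
\item Holomorphy in $(z,\bar w)$ together with Cauchy estimates handles the $w$-derivatives.
\item Differentiate the real identity $P_{t_0}(\delta^{t-t_0}K_t(\cdot,w))=K_{t_0}(\cdot,w)$ at $t=t_0$. This gives $K_{t_0}^{(m)}(\cdot,w)=\sum_{j<m}\binom{m}{j}P_{t_0,m-j}K_{t_0}^{(j)}(\cdot,w)$. Induction on the last part of the composition then yields \eqref{fo:derivation} without the cutoff and commutation lemmas.
\end{itemize}
Phrase this as differentiation of that identity, not as a Neumann expansion of $Q_\zeta^{-1}$.
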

Boundary-distance weights also arise in boundary asymptotics.
Building on work of Fefferman and Boutet de Monvel--Sj\"ostrand
\cite{Fefferman74,Fefferman79,BS75}, Hirachi studied the relation
between weighted Bergman and Szeg\H{o} kernel expansions for powers
of a smooth defining function on strictly pseudoconvex domains
\cite{Hirachi2004}. Engli\v{s} obtained power--logarithmic expansions
and meromorphic continuation in the exponent in this smooth-boundary
setting \cite{Englis2010,EnglisAnalytic2010}; Chen related the
endpoint behavior of distance-weighted kernels to the Szeg\H{o}
kernel \cite{ChenBergmanSpace}. By contrast,
Theorem~\ref{th:Gevrey-1} treats the Euclidean boundary-distance
weight on bounded pseudoconvex domains with $C^2$ boundary and
establishes local analyticity for $t>-1$ by weighted $L^2$ methods.

Bergman kernels also arise in large tensor-power asymptotics.
The theory initiated by Tian was developed by Catlin, Zelditch,
Lu, Ma--Marinescu, and Berman--Berndtsson--Sj\"ostrand
\cite{Tian90,Catlin99,Zelditch98,Lu00,MaMarinescu2007,MaMarinescu2008,BBS08}.
Zelditch's proof uses the Boutet de Monvel--Sj\"ostrand parametrix.
These results concern a large-parameter asymptotic regime, whereas
our results concern local regularity at finite parameter values.

The paper is organized as follows. Section~2 introduces the
Bell-polynomial notation, recalls elementary facts about holomorphic
operator families, and establishes the weighted openness properties
used later. Section~3 proves
Theorems~\ref{th:formula-1} and \ref{th:Bounded-weight}.
Section~\ref{sec:analyticity} first proves holomorphy in a fixed
weighted norm and then derives the local uniform analytic estimates.
Section~\ref{sec:logarithmic} identifies all parameter derivatives
by the Leibniz and composition argument of Section~3 and completes
the proof of Theorem~\ref{th:Gevrey-1}.

\section{Preliminaries}
We write $\mathbb N_0=\{0,1,2,\ldots\}$ and
$\mathbb N_+=\{1,2,\ldots\}$. 
Real parameters are denoted by $t,t_0,t_1,\ldots$, and complex
parameters by $\tau,\tau_0,\tau_1,\ldots$. 
\subsection{Bell polynomials}

Let $f$ and $g$ be $C^\infty$ functions such that
the composition $h=g\circ f$ is well defined. Fa\`a di Bruno's formula takes the following form (cf. \cite{Fraenkel78}):
\begin{eqnarray*}
h^{(m)}(x)=\sum_{k=1}^m g^{(k)}(f(x))\cdot B_{m,k}(f'(x),f''(x),\ldots,f^{(m-k+1)}(x)),
\ \ \  m=1,2,\ldots,
\end{eqnarray*}
where
\[
B_{m,k}(x_1,x_2,\ldots,x_{m-k+1})=\sum \frac{m!}{n_1!n_2!\cdots n_{m-k+1}!}\prod_{j=1}^{m-k+1}\left(\frac{x_j}{j!}\right)^{n_j},
\]
and the summation is taken over all nonnegative integers $(n_1,\ldots,n_{m-k+1})$ satisfying
\[
 n_1+n_2+\cdots+n_{m-k+1}=k
\]
and
\[
1\cdot n_1+2\cdot n_2+\cdots+ (m-k+1)\cdot n_{m-k+1}=m.
\]
In particular, when $h=e^f$, we have
\[
h^{(m)}(x)=e^{f(x)}\sum_{k=1}^m B_{m,k}(f'(x),f''(x),\ldots,f^{(m-k+1)}(x))=:e^{f(x)}\,B_m(f',\ldots,f^{(m)}),
\]
where $B_m$ is the $m$th complete exponential Bell polynomial.

If there exist constants $C,R>0$ and $s\ge1$ such that
$|x_j|\le CR^j (j!)^s$ for $j=1,2,\ldots,m$, then
for every term occurring in $B_{m,k}$,
\begin{eqnarray*}
\prod_{j=1}^{m-k+1}
\left|\frac{x_j}{j!}\right|^{n_j}
&\le&
C^kR^m
\prod_{j=1}^{m-k+1}(j!)^{(s-1)n_j}\\
&\le&
C^kR^m(m!)^{s-1},
\end{eqnarray*}
since $(p+q)!\ge p!q!$ for nonnegative integers $p,q$. Set $C_1:=\max\{1,C\}$. It follows that
\begin{eqnarray}\label{eq:Bell-polynomial}
|B_m(x_1,\ldots,x_m)|
&\le&
C_1^mR^m(m!)^s
\sum_{k=1}^m
\sum
\frac{1}{n_1!n_2!\cdots n_{m-k+1}!}\nonumber\\
&\le&
e(2C_1R)^m(m!)^s.
\end{eqnarray}
Indeed,
\[
1+\sum_{m\ge1}
\left(
\sum_{k=1}^m
\sum
\frac{1}{n_1!n_2!\cdots n_{m-k+1}!}
\right)t^m
=
\exp\left(\frac{t}{1-t}\right),
\]
and evaluation at $t=1/2$ gives the stated bound.
For an ordered tuple $\alpha=(\alpha_1,\ldots,\alpha_r)\in\mathbb N_+^r$ with $r\in\mathbb N_+$, we introduce
\[
|\alpha|:=\alpha_1+\cdots+\alpha_r,\ \ \  \alpha!:=\prod_{j=1}^r \alpha_j!,
\]
and
\[
\Lambda_{m,r}:=\{\alpha\in\mathbb N_+^r:|\alpha|=m\},
\ \ \ 
\Lambda_m:=\bigcup_{r=1}^m\Lambda_{m,r},
\ \ \  m\in\mathbb N_+.
\]
Note that $\#\Lambda_m=2^{m-1}$ for $m\ge1$. We also set
$\Lambda_0=\{\varnothing\}$, $|\varnothing|=0$, and
$\varnothing!=1$.

Motivated by Fa\`a di Bruno's formula, we define
\begin{eqnarray*}
&&P_{\varphi_t,k}(f)(z)
:=
P_{\varphi_t}
\left(-f B_k(-\varphi_t)\right)(z),\ \ \  k\in\mathbb N_+,\\
&&P_{\varphi_t,\alpha}:=P_{\varphi_t,\alpha_r}\circ\cdots\circ P_{\varphi_t,\alpha_1},
\ \ \  P_{\varphi_t,\varnothing}:=\mathrm{Id},
\end{eqnarray*}
provided that the right-hand side is well defined, where
\[
B_k(-\varphi_t)
:=B_k\left(\partial_t(-\varphi_t),\ldots,
                         \partial_t^k(-\varphi_t)\right),
\ \ \ 
(-\varphi_t)^{(j)}=\partial_t^j(-\varphi_t).
\]

\subsection{Holomorphic operator families}

For a Hilbert space $H$, $\mathcal L(H)$ denotes its bounded linear
operators and $\mathrm{Id}$ denotes the identity operator, with
\[
\|A\|_{\mathcal L(H)}
 =\sup_{\|u\|_H\le1}\|Au\|_H,\ \ \ 
\langle Au,v\rangle_H=\langle u,A^*v\rangle_H.
\]
Let $U\subset\mathbb C$ be open.
A map $F:U\to H$ is called holomorphic if, for every $\tau\in U$,
the limit
\[
F'(\tau):=
\lim_{\substack{\Delta\tau\to0\\\Delta\tau\in\mathbb C\setminus\{0\}}}
\frac{F(\tau+\Delta\tau)-F(\tau)}{\Delta\tau}
\]
exists in the norm of $H$.
For $F:U\to\mathcal L(H)$, holomorphy is defined by the same
condition, with convergence in operator norm.

The following elementary projection identity is related to the
Kerzman--Stein formula \cite{KerzmanStein78}.
We include the proof of the precise form used below.

\begin{lemma}\label{le:ra-projection-identity}
Let $A\in\mathcal L(H)$ satisfy $A^2=A$, and let $Q$ be the
orthogonal projection onto $\operatorname{Ran}A$. Then
$A+A^*-\mathrm{Id}$ is invertible,
\[
\|(A+A^*-\mathrm{Id})^{-1}\|\le1,
\]
and
\begin{equation}\label{ra:abstract-projection}
Q=A(A+A^*-\mathrm{Id})^{-1}.
\end{equation}
\end{lemma}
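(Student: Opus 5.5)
Write $T:=A+A^*-\mathrm{Id}$, which is self-adjoint. The plan is to show $T^2\ge\mathrm{Id}$ in the sense of quadratic forms; invertibility and the norm bound then follow. After that, the formula for $Q$ comes from two algebraic identities.

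\emph{Step 1: expand $T^2$.} Using $A^2=A$ and $(A^*)^2=A^*$, we get
\[
T^2=A+AA^*-A+A^*A+A^*-A^*-A-A^*+\mathrm{Id}
=\mathrm{Id}+AA^*+A^*A-A-A^*.
\]
This can be rearranged as
\[
T^2=\mathrm{Id}+(A-A^*)^*(A-A^*)\,,
\]
since $(A^*-A)(A-A^*)=A^*A-A^*A^*-AA+AA^*=A^*A+AA^*-A^*-A$. Hence
\[
\langle T^2u,u\rangle=\|u\|^2+\|(A-A^*)u\|^2\ge\|u\|^2 .
\]
Because $T$ is self-adjoint, this reads $\|Tu\|^2\ge\|u\|^2$. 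So $T$ is injective with closed range. Its range has orthogonal complement $\ker T^*=\ker T=\{0\}$, so $T$ is surjective. Therefore $T$ is invertible with $\|T^{-1}\|\le1$.

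\emph{Step 2: the formula.} A direct computation with $A^2=A$ gives
\[
AT=A+AA^*-A=AA^*,\qquad TA^*=AA^*+A^*-A^*=AA^*.
\]
Hence $AT=TA^*$, and so $T^{-1}A=A^*T^{-1}$. Taking adjoints of $AT=TA^*$ gives $TA=A^*T$, which yields $AT^{-1}=T^{-1}A^*$. Set $Q':=AT^{-1}$. Then:
\begin{itemize}
\item $(Q')^*=T^{-1}A^*=AT^{-1}=Q'$, so $Q'$ is self-adjoint.
\item $(Q')^2=AT^{-1}AT^{-1}=A\,A^*T^{-1}T^{-1}$. Here $AA^*=AT$, so this equals $AT\,T^{-1}T^{-1}=AT^{-1}=Q'$, so $Q'$ is idempotent.
\end{itemize}
Thus $Q'$ is an orthogonal projection.

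\emph{Step 3: identify the range.} Clearly $\operatorname{Ran}Q'\subset\operatorname{Ran}A$. Conversely, for $v=Av'\in\operatorname{Ran}A$ we have
\[
Q'(Tv)=AT^{-1}Tv=Av=A^2v'=v,
\]
so $v\in\operatorname{Ran}Q'$. An orthogonal projection is determined by its range, so $Q'=Q$, which is \eqref{ra:abstract-projection}.

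The only slightly delicate points are the identity $T^2=\mathrm{Id}+(A-A^*)^*(A-A^*)$ and the intertwining $AT=TA^*$; everything else is routine. Note also that $\operatorname{Ran}A=\ker(\mathrm{Id}-A)$ is closed, so $Q$ is well defined.
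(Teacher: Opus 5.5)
Your Step~1 is exactly the paper's argument. In Step~2, however, one justification is wrong. Taking adjoints of $AT=TA^*$ does not give $TA=A^*T$. Since $T^*=T$, one has $(AT)^*=TA^*$ and $(TA^*)^*=AT$, so the adjoint of $AT=TA^*$ is the same identity; it only says that $AT=AA^*$ is self-adjoint. Likewise, the adjoint of $T^{-1}A=A^*T^{-1}$ is that same relation again.

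The relation $AT^{-1}=T^{-1}A^*$, on which the self-adjointness of your $Q'$ rests, therefore needs its own argument. The relation is true, and the repair takes one line: $TA=A^2+A^*A-A=A^*A$ and $A^*T=A^*A+(A^*)^2-A^*=A^*A$. Equivalently, apply $AT=TA^*$ to the idempotent $A^*$, which leaves $T$ unchanged. With this fix, Steps~2 and~3 are correct: your idempotence computation and range identification check out.

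Your route to the formula differs from the paper's. You build $Q'=AT^{-1}$ and verify that it is a self-adjoint idempotent with range $\operatorname{Ran}A$, which requires both intertwining relations. The paper starts from $Q$ instead. Since $A$ fixes $\operatorname{Ran}A=\operatorname{Ran}Q$, one has $QA=A$ and $AQ=Q$, and taking adjoints of the latter gives $QA^*=Q$. Hence $QT=A+Q-Q=A$, and multiplying on the right by $T^{-1}$ gives $Q=AT^{-1}$ at once. This avoids the intertwining identities, the idempotence check, and the range identification, which is where your slip occurred. Your version does produce the relations $T^{-1}A=A^*T^{-1}$ and $AT^{-1}=T^{-1}A^*$ as a byproduct, but nothing later in the paper uses them.
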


\begin{proof}
The range is closed because
$\operatorname{Ran}A=\ker(\mathrm{Id}-A)$.
Set $X=A+A^*-\mathrm{Id}$. Direct multiplication gives
\begin{align*}
X^*&=X,\\
X^2&=\mathrm{Id}+(A-A^*)^*(A-A^*),\\
\|Xu\|^2&=\|u\|^2+\|(A-A^*)u\|^2\ge\|u\|^2.
\end{align*}
Hence $X$ is injective and has closed range. Since
\[
(\operatorname{Ran}X)^\perp=\ker X^*=\ker X=\{0\},
\]
its range is also dense, and therefore equals $H$.
The same inequality gives $\|X^{-1}\|\le1$.
Finally,
\[
QA=A,\ \ \  AQ=Q,\ \ \  QA^*=Q,
\]
so
\[
Q(A+A^*-\mathrm{Id})=A+Q-Q=A.
\]
Multiplication by $X^{-1}$ proves \eqref{ra:abstract-projection}.
\end{proof}

\begin{lemma}[{cf. \cite[Section~2.3]{EnglisAnalytic2010}}]
\label{le:ra-weak-holomorphy}
Let $H$ be a complex Hilbert space, $U\subset\mathbb C$ open,
and $F:U\to\mathcal L(H)$ locally bounded.
Suppose that $\tau\mapsto\langle F(\tau)u,v\rangle_H$ is holomorphic
for all $u,v$ in a fixed dense linear subspace of $H$.
Then $F$ is holomorphic in operator norm.
Moreover, if $R>0$ and $\{|\tau-\tau_0|\le R\}\Subset U$, then
\begin{equation}\label{ra:operator-coefficients}
\|F(\tau)-F(\tau_0)\|
\le
\frac{|\tau-\tau_0|}{R-|\tau-\tau_0|}
\sup_{|\tau_1-\tau_0|=R}\|F(\tau_1)\|,
\ \ \  |\tau-\tau_0|<R.
\end{equation}
\end{lemma}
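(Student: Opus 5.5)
The plan is to reduce operator-norm holomorphy to scalar holomorphy on the dense subspace, using the Cauchy integral formula and local boundedness. Fix $\tau_0\in U$ and $R>0$ with $\overline{\Delta}:=\{|\tau-\tau_0|\le R\}\Subset U$. By local boundedness and compactness of the closed disc, $M:=\sup_{\overline\Delta}\|F(\tau)\|<\infty$. All estimates will first be proved for matrix coefficients $\langle F(\tau)u,v\rangle$ with $u,v$ in the dense subspace $E$, and then extended to all of $H$.

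\textbf{Step 1: Cauchy formula on $E$.} For $u,v\in E$, the function $f_{u,v}(\tau)=\langle F(\tau)u,v\rangle$ is holomorphic on $U$. Hence, for $|\tau-\tau_0|<R$,
\[
f_{u,v}(\tau)-f_{u,v}(\tau_0)=\frac{1}{2\pi i}\oint_{|\tau_1-\tau_0|=R}f_{u,v}(\tau_1)\Bigl(\frac{1}{\tau_1-\tau}-\frac{1}{\tau_1-\tau_0}\Bigr)d\tau_1 .
\]
Since $\bigl|\frac{\tau-\tau_0}{(\tau_1-\tau)(\tau_1-\tau_0)}\bigr|\le\frac{|\tau-\tau_0|}{R(R-|\tau-\tau_0|)}$, the circle has length $2\pi R$, and $|f_{u,v}(\tau_1)|\le M\|u\|\|v\|$, this gives
\[
|\langle (F(\tau)-F(\tau_0))u,v\rangle|\le\frac{|\tau-\tau_0|}{R-|\tau-\tau_0|}\,M'\,\|u\|\|v\|,
\]
with $M':=\sup_{|\tau_1-\tau_0|=R}\|F(\tau_1)\|$.

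\textbf{Step 2: density.} For fixed $\tau$, both sides of this inequality are continuous in $(u,v)$, because $F(\tau)-F(\tau_0)$ is a bounded operator. So the inequality extends to all $u,v\in H$, and taking the supremum over unit vectors yields \eqref{ra:operator-coefficients}. The same argument applies with $\tau_0$ replaced by any point of $U$, so $F$ is norm-continuous. Consequently $f_{u,v}$ is continuous on $U$ for arbitrary $u,v\in H$. Moreover, $f_{u,v}$ is a locally uniform limit of the holomorphic functions $f_{u_k,v_k}$ with $u_k\to u$, $v_k\to v$ in $E$, since $|f_{u,v}-f_{u_k,v_k}|\le M(\|u-u_k\|\|v\|+\|u_k\|\|v-v_k\|)$ on $\overline\Delta$. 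Hence $f_{u,v}$ is holomorphic for all $u,v\in H$.

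\textbf{Step 3: norm holomorphy.} Define the candidate derivative
\[
G(\tau)=\frac{1}{2\pi i}\oint_{|\tau_1-\tau_0|=R}\frac{F(\tau_1)}{(\tau_1-\tau)^2}\,d\tau_1 ,
\]
a Riemann integral in $\mathcal L(H)$, which is legitimate since $F$ is norm-continuous by Step 2. Weak Cauchy formulas, valid for all $u,v\in H$, identify
\[
\frac{F(\tau+h)-F(\tau)}{h}-G(\tau)
\]
weakly with the contour integral of $F(\tau_1)$ against the scalar kernel $k_h(\tau_1):=\frac{1}{h}\Bigl(\frac{1}{\tau_1-\tau-h}-\frac{1}{\tau_1-\tau}\Bigr)-\frac{1}{(\tau_1-\tau)^2}$. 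This kernel tends to $0$ uniformly on the circle as $h\to0$, so the operator norm of the difference quotient minus $G(\tau)$ is bounded by $R\,M'\sup_{|\tau_1-\tau_0|=R}|k_h(\tau_1)|\to0$. This gives holomorphy in operator norm.

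The main obstacle is Step 2: passing from holomorphy of matrix coefficients on $E$ alone to all of $H$. This is exactly where local boundedness is needed, to make the coefficient functions uniformly controlled and to justify the uniform limit.
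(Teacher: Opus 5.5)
Your proof is correct, and it takes a more self-contained route than the paper's.

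\textbf{The paper's route.} The paper first extends holomorphy of $\langle F(\tau)u,v\rangle_H$ from the dense subspace to all of $H$; this is exactly your local-uniform-limit argument in Step~2. It then cites \cite[Section~2.3]{EnglisAnalytic2010} for the fact that a weakly holomorphic, locally bounded operator family is holomorphic in operator norm. Only after that does it derive \eqref{ra:operator-coefficients}, from the operator-valued Cauchy formula \eqref{ra:cauchy-operator}, which presupposes that norm holomorphy.

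\textbf{Your route.} You reverse the order. You obtain \eqref{ra:operator-coefficients} directly from the scalar Cauchy formula on the dense subspace and extend the sesquilinear inequality to $H\times H$ by density. From this you read off local Lipschitz, hence norm, continuity of $F$. You then use that continuity to make the Riemann integral defining $G(\tau)$ legitimate. Finally, you prove norm differentiability from the uniform smallness on the circle of $k_h(\tau_1)=h/\bigl((\tau_1-\tau-h)(\tau_1-\tau)^2\bigr)$.

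\textbf{Comparison.} Your argument needs no external weak-to-norm holomorphy theorem, because the assumed local boundedness does the work usually supplied by the uniform boundedness principle. It also shows the estimate already holds at the level of matrix coefficients. The paper's proof is shorter only because it outsources the norm holomorphy to the citation.
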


\begin{proof}
For $u,v\in H$, choose $u_j,v_j$ in the stated dense subspace
such that $u_j\to u$ and $v_j\to v$. For every $E\Subset U$,
\begin{align*}
&\sup_{\tau\in E}
\left|
\langle F(\tau)u_j,v_j\rangle_H
-\langle F(\tau)u,v\rangle_H
\right|\\
&\quad\le
\sup_{\tau\in E}\|F(\tau)\|
\bigl(
\|u_j-u\|\|v_j\|
+\|u\|\|v_j-v\|
\bigr)
\longrightarrow0.
\end{align*}
Thus $\tau\mapsto\langle F(\tau)u,v\rangle_H$ is holomorphic
for all $u,v\in H$. By
\cite[Section~2.3]{EnglisAnalytic2010},
$F$ is holomorphic in operator norm.

The Cauchy formula gives
\begin{equation}\label{ra:cauchy-operator}
F(\tau)-F(\tau_0)
=
\frac{\tau-\tau_0}{2\pi i}
\int_{|\tau_1-\tau_0|=R}
\frac{F(\tau_1)}{(\tau_1-\tau)(\tau_1-\tau_0)}\,d\tau_1,
\ \ \  |\tau-\tau_0|<R.
\end{equation}
Since $|\tau_1-\tau|\ge R-|\tau-\tau_0|$ on the circle,
\[
\|F(\tau)-F(\tau_0)\|
\le
\frac{|\tau-\tau_0|}{R-|\tau-\tau_0|}
\sup_{|\tau_1-\tau_0|=R}\|F(\tau_1)\|,
\]
which proves \eqref{ra:operator-coefficients}.
\end{proof}

\begin{lemma}[{cf. \cite[Section~2.3]{EnglisAnalytic2010}}]
\label{le:ra-inversion}
Let $F(\tau)$ be holomorphic with values in $\mathcal L(H)$ near
$\tau_0$, and suppose that $F(\tau_0)$ is invertible. Whenever
\[
\|(F(\tau)-F(\tau_0))F(\tau_0)^{-1}\|\le \theta<1,
\]
one has
\begin{align}
F(\tau)^{-1}
 &=F(\tau_0)^{-1}
   \sum_{j=0}^\infty
       \big[-(F(\tau)-F(\tau_0))F(\tau_0)^{-1}\big]^j,
\label{ra:neumann-series}\\
\|F(\tau)^{-1}\|&\le\frac{\|F(\tau_0)^{-1}\|}{1-\theta}.
\notag
\end{align}
The inverse is holomorphic in operator norm.
\end{lemma}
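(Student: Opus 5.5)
We prove Lemma~\ref{le:ra-inversion} with the standard Neumann-series argument, then get holomorphy of the inverse from the series itself. Write $A:=F(\tau_0)$, which is invertible, and $E(\tau):=(F(\tau)-F(\tau_0))A^{-1}$. This gives the factorization
\[
F(\tau)=A+(F(\tau)-A)=(\mathrm{Id}+E(\tau))A .
\]
The hypothesis $\|E(\tau)\|\le\theta<1$ makes the series $\sum_{j\ge0}(-E(\tau))^j$ converge absolutely in $\mathcal L(H)$, since $\|(-E)^j\|\le\theta^j$. Its sum $S(\tau)$ satisfies
\[
S(\tau)(\mathrm{Id}+E(\tau))=(\mathrm{Id}+E(\tau))S(\tau)=\mathrm{Id},
\]
which follows by telescoping the partial sums, whose remainder $(-E)^{N+1}$ tends to $0$ in norm.

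Consequently $F(\tau)$ is invertible, with two-sided inverse $A^{-1}S(\tau)$. Explicitly,
\[
A^{-1}S(\tau)\,(\mathrm{Id}+E(\tau))A=A^{-1}A=\mathrm{Id},
\qquad
(\mathrm{Id}+E(\tau))A\,A^{-1}S(\tau)=\mathrm{Id}.
\]
This is exactly \eqref{ra:neumann-series}. The norm bound follows from $\|S(\tau)\|\le\sum_j\theta^j=(1-\theta)^{-1}$ and submultiplicativity of the operator norm.

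For holomorphy, we first show the hypothesis holds near $\tau_0$. Since $F$ is holomorphic in operator norm, it is continuous, so $\|E(\tau)\|\to0$ as $\tau\to\tau_0$. Hence $\|E(\tau)\|\le\theta$ for some $\theta<1$ on a small disc around $\tau_0$. On that disc, $E$ is holomorphic because it is $F$ minus a constant, composed with right multiplication by the fixed operator $A^{-1}$. Each power $E^j$ is then holomorphic, since products of norm-holomorphic maps are holomorphic by the usual Leibniz difference-quotient argument.

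The series $\sum_j(-E)^j$ converges uniformly on the disc, because its $j$th term is bounded by $\theta^j$. A locally uniform limit of Banach-valued holomorphic maps is holomorphic. To see this, note that the vector-valued Cauchy integral formula passes to uniform limits, or alternatively apply Lemma~\ref{le:ra-weak-holomorphy} using weak holomorphy and local boundedness. Multiplying on the left by $A^{-1}$ preserves holomorphy, so $F(\tau)^{-1}$ is holomorphic near $\tau_0$.

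The same argument applies at any other point where $F$ is invertible, so the inverse is holomorphic on the whole open set where it exists. The only point needing care is justifying that uniform limits of holomorphic operator families are holomorphic; we handle this through the Cauchy integral representation \eqref{ra:cauchy-operator}, which is stable under uniform convergence.
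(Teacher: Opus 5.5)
Your proof is correct. The factorization $F(\tau)=(\mathrm{Id}+E(\tau))F(\tau_0)$, the Neumann series for $(\mathrm{Id}+E(\tau))^{-1}$, and holomorphy via locally uniform convergence of the holomorphic partial sums are the standard argument encoded in \eqref{ra:neumann-series}; the paper states this lemma without proof and simply defers to \cite[Section~2.3]{EnglisAnalytic2010}, so there is no different route to compare.
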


\subsection{Openness properties}
For convenience, we define $L^2_t(\Omega,\varphi)$ to be the Hilbert
space of measurable functions $f$ such that
\[
 \int_\Omega |f|^2e^{-\varphi}\delta^t<\infty.
\]
We write $L^2_t(\Omega)=L^2_t(\Omega,0)$.
We recall the following H\"ormander-type $L^2$ estimate for $\bar\partial$.

\begin{theorem}[cf. \cite{ChenBergmanSpace}]\label{th:Chen}
Let $\Omega$ be a bounded pseudoconvex domain with $C^2$ boundary and let $\varphi$ be a $C^2$ strictly plurisubharmonic (psh) function on $\Omega$. Then, for any $t>-1$ and any $\bar\partial$-closed $(0,1)$-form $v$ satisfying
$
\int_\Omega |v|^2_{i\partial\bar\partial\varphi}e^{-\varphi}\delta^t<\infty,
$
there exists a solution $u$ of $\bar\partial u=v$ such that
\[
 \int_\Omega |u|^2e^{-\varphi}\delta^t\le C(t) \int_\Omega |v|^2_{i\partial\bar\partial\varphi}e^{-\varphi}\delta^t.
\]
The constants may be chosen so that $\sup_{t\in J}C(t)<\infty$
for every compact interval $J\Subset(-1,\infty)$.
\end{theorem}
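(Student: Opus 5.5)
The plan is to treat $\delta^t=e^{-t(-\log\delta)}$ as an additional weight and to split into the cases $t\ge0$ and $-1<t<0$. Both cases rest on the fact that $\psi:=-\log\delta$ is plurisubharmonic on any pseudoconvex domain in $\mathbb C^n$, which is Oka's lemma for the Euclidean boundary distance.

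\emph{Case $t\ge0$.} Here $e^{-\varphi}\delta^t=e^{-(\varphi+t\psi)}$, and $\varphi+t\psi$ is psh with $i\partial\bar\partial(\varphi+t\psi)\ge i\partial\bar\partial\varphi$. Hence $|v|^2_{i\partial\bar\partial(\varphi+t\psi)}\le|v|^2_{i\partial\bar\partial\varphi}$. Since $\psi$ is only continuous, I would use H\"ormander's estimate for singular psh weights in Demailly's form. Alternatively, one can regularize $\psi$ on an exhaustion $\Omega_\varepsilon\uparrow\Omega$, solve there, and pass to a weak limit. Either way this gives the estimate with $C(t)=1$, which is trivially uniform on compact $J$.

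\emph{Case $-1<t<0$.} I would use the Berndtsson (Donnelly--Fefferman type) twisted estimate. Suppose $\psi$ is psh and satisfies
\[
i\partial\psi\wedge\bar\partial\psi\le i\partial\bar\partial\psi+i\partial\bar\partial\chi
\]
for a bounded smooth $\chi$, and let $0\le a<1$. Then the $L^2(e^{-\varphi})$-minimal solution $u$ of $\bar\partial u=v$ satisfies
\[
\int|u|^2e^{a\psi-\varphi}\le \frac{C}{(1-a)^2}\int|v|^2_{i\partial\bar\partial\varphi}e^{a\psi-\varphi}.
\]
Taking $a=-t\in(0,1)$ gives $e^{a\psi}=\delta^{t}$, which is exactly the required weight. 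The bounded $\chi$ can be absorbed: replace $\varphi$ by $\varphi+\chi$. This changes $e^{-\varphi}$ only by a bounded factor, and it increases the curvature, so the norm $|v|_{i\partial\bar\partial\varphi}$ only improves. The constant $C/(1+t)^2$ is uniform on compact $J\Subset(-1,0)$. Combining the two ranges gives $\sup_{t\in J}C(t)<\infty$.

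\emph{Main obstacle: the self-bounded gradient condition for $\psi=-\log\delta$ with constant $1$.} The constant must be exactly $1$ (up to the bounded correction $\chi$), not merely some $r<1$ as in Diederich--Fornaess; otherwise the argument reaches only $t>-\eta$ for some $\eta<1$ instead of all $t>-1$. On a $C^2$ boundary, $\delta$ is $C^2$ in a collar $\{\delta<\varepsilon_0\}$, and there
\[
i\partial\bar\partial\psi=\frac{-i\partial\bar\partial\delta}{\delta}+\frac{i\partial\delta\wedge\bar\partial\delta}{\delta^2}.
\]
The second term is exactly $i\partial\psi\wedge\bar\partial\psi$. So what must be shown is that $-i\partial\bar\partial\delta/\delta\ge -K\,i\partial\bar\partial|z|^2$ as forms, not just in complex-tangential directions. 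I would prove this in three steps. First, in directions tangent to the level sets of $\delta$, pseudoconvexity (Levi form of $-\delta$ nonnegative on $\partial\Omega$, propagated to level sets via Oka's lemma) gives $-\partial\bar\partial\delta\ge -O(\delta)|\xi|^2$. Second, the mixed and normal components of $\partial\bar\partial\delta$ are bounded because $\delta\in C^2$. Their contribution is $O(1/\delta)$ cross-terms, which must be absorbed by Cauchy--Schwarz against $\delta^{-2}|\partial\delta\cdot\xi|^2$. Third, this absorption costs a factor $(1+\epsilon)$ in front of $i\partial\psi\wedge\bar\partial\psi$. That forces working with $a<1/(1+\epsilon)$; since $\epsilon$ is arbitrary, every $a<1$ is still reachable, with constants uniform on compact $a$-ranges. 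Away from the collar, $\psi$ is bounded and smooth after regularization, so the condition holds there after enlarging $\chi=K|z|^2$.
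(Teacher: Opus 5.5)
Your route is correct in substance, but it differs from the paper's in both ranges of $t$. For $t\ge0$, the paper takes a global Diederich--Forn\ae ss function $\rho$, applies H\"ormander's estimate to $\varphi-(t/\kappa)\log(-\rho)$, and pays the comparison factor $(c_2/c_1)^{t/\kappa}$. Your use of Oka's lemma with the singular-weight H\"ormander estimate for $\varphi-t\log\delta$ gives $C(t)=1$ directly and does not use the $C^2$ boundary at all.

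For $-1<t<0$, the paper follows Chen: a finite cover by local Diederich--Forn\ae ss functions with exponents $\beta<1$ close to $1$, and the twisted a priori estimate with $\eta_{j,\alpha}=(-\rho_j)^{\alpha/\beta}$ on $\Omega_\varepsilon$. These are glued by a partition of unity, the lower-order errors are absorbed by $T|z|^2$, and duality and weak limits finish. You instead twist the minimal solution in the manner of Berndtsson--Charpentier (the device of Proposition~\ref{prop:DFB}) with the single global function $\psi=-\log\delta$. Your only geometric input is the collar inequality
\[
(1-\epsilon)\,i\partial\psi\wedge\bar\partial\psi\le i\partial\bar\partial\psi+K_\epsilon\,i\partial\bar\partial|z|^2 .
\]
Take $\Theta=i\partial\bar\partial(\varphi+M|z|^2+a\psi)$ with $M\ge K_\epsilon$. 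Then $\Theta\ge i\partial\bar\partial\varphi+a(1-\epsilon)\,i\partial\psi\wedge\bar\partial\psi$, so $a^2|\bar\partial\psi|^2_\Theta\le a/(1-\epsilon)<1$ whenever $a<1-\epsilon$, and the absorption closes. This replaces the local exponent-near-one input and the patching by one pointwise computation, and it gives explicit constants. It also shows clearly why every $t>-1$ is reached even though global Diederich--Forn\ae ss exponents can be small: the correction $K_\epsilon|z|^2$ enters only on the Hessian side, not in the gradient term.

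One statement must be corrected. The exact-constant inequality you first say must be shown, $-i\partial\bar\partial\delta/\delta\ge-K\,i\partial\bar\partial|z|^2$ as forms, is false in general. For the annulus $\{1<|z|<2\}\subset\mathbb C$, near the inner circle one has $\delta=|z|-1$, and $-\partial_z\partial_{\bar z}\delta/\delta=-1/(4|z|\delta)\to-\infty$. What your three steps actually prove is the $(1-\epsilon)$ version displayed above; Oka's lemma even gives $-\partial\bar\partial\delta\ge0$ exactly on $\ker\partial\delta$. That version is all the twisting needs, so drop the claim that the constant must be exactly $1$.

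Also make the routine points explicit:
\begin{enumerate}
\item[(i)] $\chi=K|z|^2$ must be plurisubharmonic for ``replace $\varphi$ by $\varphi+\chi$'' to increase the curvature.
\item[(ii)] Away from the collar, $-\log\delta$ is only Lipschitz. Replace it there by a $C^2$ function; the inequality then holds on all of $\Omega$ with a larger $K_\epsilon$.
\item[(iii)] The absorption step needs $\int|u|^2e^{a\psi-\varphi}<\infty$ a priori. So work on the pseudoconvex domains $\{\delta>\varepsilon\}$, where the twist is bounded (as with $\psi_\varepsilon$ in Proposition~\ref{prop:DFB}), and pass to a weak limit.
\end{enumerate}
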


\begin{remark}[Uniformity of Chen's estimate]
Fix a compact interval $J\Subset(-1,\infty)$ and choose
$0<a<\beta<1$ and $b\ge0$ with $J\subset[-a,b]$.
For $t=-\alpha$, $0<\alpha\le a$, use one finite local
Diederich--Forn\ae ss cover, including an interior patch, as in
\cite[Section~2]{ChenBergmanSpace}, with negative $C^2$
plurisubharmonic functions $\rho_j$ satisfying
$-\rho_j\asymp\delta^\beta$ on the corresponding patches.
Set
\[
\eta_{j,\alpha}=(-\rho_j)^{\alpha/\beta},\ \ \ 
c(s)=\frac{\beta-a}{a}s^{-1}.
\]
Then $\eta_{j,\alpha}\asymp\delta^\alpha$ uniformly in $\alpha$,
$\eta_{j,\alpha}+c(\eta_{j,\alpha})^{-1}
=\beta\eta_{j,\alpha}/(\beta-a)$, and
\begin{align*}
-i\bigl(\partial\bar\partial\eta_{j,\alpha}
 +c(\eta_{j,\alpha})\partial\eta_{j,\alpha}
             \wedge\bar\partial\eta_{j,\alpha}\bigr)
=\frac{\alpha\eta_{j,\alpha}}{\beta}
 \left[
 \frac{i\partial\bar\partial\rho_j}{-\rho_j}
 +\left(1-\frac{\alpha}{a}\right)
   \frac{i\partial\rho_j\wedge\bar\partial\rho_j}{(-\rho_j)^2}
 \right]\ge0.
\end{align*}
Because the cover and a subordinate partition of unity are fixed,
while the comparison constants above and the derivatives of the
cutoffs are uniform for $0<\alpha\le a$, the constants produced by
the localization and product-rule estimates in
\cite[(2.2)--(2.3)]{ChenBergmanSpace} are independent of
$\alpha,\varepsilon,\varphi$, and $T$.
Put $\varphi_T=\varphi+T|z|^2$. On
$\Omega_\varepsilon=\{\delta>\varepsilon\}$, for sufficiently small
$\varepsilon>0$, Chen's localization
argument \cite[(2.1)--(2.3)]{ChenBergmanSpace} therefore gives
\begin{equation}\label{eq:Chen-uniform-basic}
\begin{aligned}
\int_{\Omega_\varepsilon}
 \langle i\partial\bar\partial\varphi_Tu,u\rangle
 e^{-\varphi_T}\delta^\alpha\,dV
\le C_J\int_{\Omega_\varepsilon}
 \bigl(|\bar\partial u|^2
       +|\bar\partial_{\varphi_T}^{*}u|^2+|u|^2\bigr)
 e^{-\varphi_T}\delta^\alpha\,dV
\end{aligned}
\end{equation}
for the $C^1$ test $(0,1)$-forms satisfying the
$\bar\partial$-Neumann boundary condition on
$\partial\Omega_\varepsilon$. Here $C_J$ depends only on $J$ and
$\Omega$, and is independent of $\alpha,\varepsilon,\varphi,T$.
Choosing $T>C_J$ absorbs the last term, leaving
$i\partial\bar\partial\varphi$ on the left. Chen's duality and
weak-limit argument then gives a uniform bound for $C(-\alpha)$;
comparison of $e^{-\varphi_T}$ and $e^{-\varphi}$ introduces only
the fixed factor $\exp(T\sup_\Omega|z|^2)$.

For $0\le t\le b$, fix a negative Diederich--Forn\ae ss function
$\rho$ with $c_1\delta^\kappa\le-\rho\le c_2\delta^\kappa$.
Applying H\"ormander's estimate to
$\Phi_t=\varphi-(t/\kappa)\log(-\rho)$ gives
$C(t)\le(c_2/c_1)^{t/\kappa}$ by comparison of the densities.
Hence $\sup_{t\in J}C(t)<\infty$.
\end{remark}
As a consequence of Theorem \ref{th:Chen}, we obtain the following Donnelly--Fefferman--Berndtsson-type estimate.

\begin{proposition}\label{prop:DFB}
Let $\Omega$ be a bounded pseudoconvex domain with $C^2$ boundary and let $\varphi$ be a $C^2$ psh function on $\Omega$. Let $\psi$ be a bounded $C^2$ strictly psh function on $\Omega$ satisfying
\begin{equation}\label{eq:DF}
|\bar\partial \psi|^2_{i\partial\bar\partial\psi}\le 1.
\end{equation}
Then, for any $t>-1$ and any $\bar\partial$-closed $(0,1)$-form $v$ for which the right-hand side below is finite, the $L^2_t(\Omega,\varphi)$-minimal solution of $\bar\partial u=v$ satisfies
\begin{equation}\label{eq:DonnellyFefferman}
 \int_\Omega |u|^2e^{-\varphi+\lambda\psi}\delta^t\le 4C(t)\int_\Omega |v|^2_{i\partial\bar\partial(\varphi+\lambda\psi)}e^{-\varphi+\lambda\psi}\delta^t,\ \ \  0<\lambda\le \frac1{4C(t)}.
\end{equation}
\end{proposition}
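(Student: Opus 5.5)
The plan is the standard Berndtsson/Błocki twisting argument, with Theorem~\ref{th:Chen} as the basic estimate. Set $\phi:=\varphi+\lambda\psi$. Then $\phi$ is $C^2$ and strictly psh, and $i\partial\bar\partial\phi\ge\lambda\,i\partial\bar\partial\psi$. Let $u$ be the $L^2_t(\Omega,\varphi)$-minimal solution of $\bar\partial u=v$, that is, $u\perp\ker\bar\partial$ in $L^2_t(\Omega,\varphi)$. The key observation is that $w:=ue^{\lambda\psi}$ is then orthogonal to $\ker\bar\partial$ in $L^2_t(\Omega,\phi)$. Indeed, for holomorphic $h$ in the relevant space,
\[
\int_\Omega w\bar h\,e^{-\phi}\delta^t=\int_\Omega u\bar h\,e^{-\varphi}\delta^t=0 .
\]
Since $\psi$ is bounded, the spaces $L^2_t(\Omega,\varphi)$ and $L^2_t(\Omega,\phi)$ coincide with equivalent norms, so there are no membership issues. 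Hence $w$ is the minimal solution in $L^2_t(\Omega,\phi)$ of
\[
\bar\partial w=e^{\lambda\psi}\bigl(v+\lambda u\,\bar\partial\psi\bigr).
\]

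Next we apply Theorem~\ref{th:Chen} with the weight $\phi$. The minimal solution has the smallest norm, so it obeys the same estimate as the solution produced by the theorem:
\[
\int_\Omega |u|^2e^{-\varphi+\lambda\psi}\delta^t=\int_\Omega|w|^2e^{-\phi}\delta^t
\le C(t)\int_\Omega |v+\lambda u\bar\partial\psi|^2_{i\partial\bar\partial\phi}\,e^{-\varphi+\lambda\psi}\delta^t .
\]
This uses $e^{2\lambda\psi}e^{-\phi}=e^{-\varphi+\lambda\psi}$. To split the right-hand side, we use $|a+b|^2\le 2|a|^2+2|b|^2$ together with
\[
|\bar\partial\psi|^2_{i\partial\bar\partial\phi}\le\lambda^{-1}|\bar\partial\psi|^2_{i\partial\bar\partial\psi}\le\lambda^{-1}
\]
from \eqref{eq:DF}. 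The second term is then bounded by
\[
2C(t)\lambda\int_\Omega|u|^2e^{-\varphi+\lambda\psi}\delta^t\le\tfrac12\int_\Omega|u|^2e^{-\varphi+\lambda\psi}\delta^t
\]
when $\lambda\le 1/(4C(t))$. This term is finite, since $u\in L^2_t(\Omega,\varphi)$ and $\psi$ is bounded, so it can be absorbed into the left-hand side. That yields the constant $4C(t)$.

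The main obstacle is the hypotheses of Theorem~\ref{th:Chen}. First, $\phi$ must be strictly psh. This holds because $\lambda>0$ and $\psi$ is strictly psh while $\varphi$ is psh. Second, the right-hand side must be finite and the form must be $\bar\partial$-closed. Closedness is automatic, since $\bar\partial\bigl(e^{\lambda\psi}(v+\lambda u\bar\partial\psi)\bigr)=\bar\partial\bar\partial w=0$ in the sense of distributions. Finiteness follows from the hypothesis on $v$ and the pointwise bound on $|\bar\partial\psi|_{i\partial\bar\partial\phi}$ above.

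A further subtlety is that Theorem~\ref{th:Chen} is stated only for existence of some solution. To pass to the minimal one, note that the minimal solution is the projection of any solution onto $(\ker\bar\partial)^\perp$, so its norm is no larger than that of the solution the theorem provides. Finally, for $\lambda$ we need only $0<\lambda\le1/(4C(t))$, with no lower bound; we keep $\lambda>0$ so that the bound $\lambda^{-1}$ makes sense.
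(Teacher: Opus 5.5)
Your proposal is correct and matches the paper's proof step for step: the Berndtsson--Charpentier twist $u\mapsto ue^{\lambda\psi}$ and its orthogonality to $\ker\bar\partial$ in $L^2_t(\Omega,\varphi+\lambda\psi)$, Theorem~\ref{th:Chen} for the weight $\varphi+\lambda\psi$, the splitting $|a+b|^2\le2|a|^2+2|b|^2$ with $|\bar\partial\psi|^2_{i\partial\bar\partial(\varphi+\lambda\psi)}\le\lambda^{-1}$ from \eqref{eq:DF}, and absorption using $\lambda\le1/(4C(t))$. Your extra remarks spell out steps the paper leaves implicit: the minimal solution is dominated by the one Theorem~\ref{th:Chen} produces, and the twisted datum is $\bar\partial$-closed with finite weighted norm.
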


\begin{proof}
Fix $0<\lambda\le \frac{1}{4C(t)}$. We use the argument of Berndtsson--Charpentier \cite{BerndtssonCharpentier00}. Since $u\perp \ker\bar\partial$ in $L^2_t(\Omega,\varphi)$ and $\psi$ is bounded, we have $ue^{\lambda\psi}\perp \ker\bar\partial$ in $L^2_t(\Omega,\varphi+\lambda\psi)$. It follows from Theorem \ref{th:Chen} and \eqref{eq:DF} that
\begin{eqnarray*}
\int_\Omega |u|^2 e^{-\varphi+\lambda\psi}\delta^t & \le & C(t)\int_\Omega |\bar\partial(ue^{\lambda\psi})|^2_{i\partial\bar\partial(\varphi+\lambda\psi)} e^{-\varphi-\lambda\psi}\delta^t\\
& \le & 2 C(t)\left(\int_\Omega |v|^2_{i\partial\bar\partial(\varphi+\lambda\psi)}e^{-\varphi+\lambda\psi}\,\delta^t+\lambda \int_\Omega |u|^2 e^{-\varphi+\lambda\psi}\delta^t\right),
\end{eqnarray*}
from which \eqref{eq:DonnellyFefferman} follows immediately.
\end{proof}

We next establish the following openness property for $P_t$.

\begin{theorem}\label{th:Projection}
Let $\Omega$ be a bounded pseudoconvex domain with $C^2$ boundary. Given $t_0>-1$, there exist $\eta_0>0$ and $C_0>0$ such that, for any $0\leq\eta\leq \eta_0$, $t\in [t_0-\eta_0,t_0+\eta_0]$, and $f\in L^2_t(\Omega)$,
\[
 \int_\Omega |P_t(f)|^2\delta^{t-\eta}\le C_0 \int_\Omega |f|^2 \delta^{t-\eta}.
\]
\end{theorem}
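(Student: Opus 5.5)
The plan is a Berndtsson--Charpentier type argument. I would realize the weight change $\delta^{-\eta}$ as $e^{\lambda\psi}$, where $\psi$ is a psh function with self-bounded gradient. I would then compare $P_t$ with the Bergman projection for the twisted density $e^{-\lambda\psi}\delta^t$, and absorb the error by Theorem~\ref{th:Chen}.

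\emph{Choice of $\psi$.} Fix a negative strictly psh Diederich--Forn\ae ss function $\rho\in C^2(\Omega)$ with $c_1\delta^\kappa\le-\rho\le c_2\delta^\kappa$. Strict plurisubharmonicity can be arranged by the usual $e^{-K|z|^2}$ modification. For $\varepsilon>0$ put $\psi_\varepsilon=-\log(-\rho+\varepsilon)$. Then $\psi_\varepsilon$ is bounded, $C^2$ and strictly psh, and
\[
i\partial\bar\partial\psi_\varepsilon=\frac{i\partial\bar\partial\rho}{-\rho+\varepsilon}+i\partial\psi_\varepsilon\wedge\bar\partial\psi_\varepsilon\ge i\partial\psi_\varepsilon\wedge\bar\partial\psi_\varepsilon .
\]
In particular \eqref{eq:DF} holds, i.e.\ $|\bar\partial\psi_\varepsilon|^2_{i\partial\bar\partial\psi_\varepsilon}\le1$. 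Moreover $e^{\lambda\psi_\varepsilon}=(-\rho+\varepsilon)^{-\lambda}$ increases to $(-\rho)^{-\lambda}\asymp\delta^{-\kappa\lambda}$ as $\varepsilon\downarrow0$. Given $0\le\eta\le\eta_0$, I take $\lambda=\eta/\kappa$.

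\emph{Main identity.} Let $f\in L^2_t(\Omega)$ with $\int|f|^2\delta^{t-\eta}<\infty$; otherwise there is nothing to prove. Let $P'$ be the Bergman projection for the density $e^{-\lambda\psi_\varepsilon}\delta^t$, which is comparable to $\delta^t$ because $\psi_\varepsilon$ is bounded. For every $a\in A^2_t$,
\[
\langle fe^{\lambda\psi_\varepsilon},a\rangle_{\lambda\psi_\varepsilon}=\langle f,a\rangle_t=\langle P_tf,a\rangle_t=\langle P_tf\,e^{\lambda\psi_\varepsilon},a\rangle_{\lambda\psi_\varepsilon},
\]
so $P'(fe^{\lambda\psi_\varepsilon})=P'(P_tf\,e^{\lambda\psi_\varepsilon})$. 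Hence
\[
u:=P_tf\,e^{\lambda\psi_\varepsilon}-P'(fe^{\lambda\psi_\varepsilon})
\]
is the minimal solution in $L^2(e^{-\lambda\psi_\varepsilon}\delta^t)$ of
\[
\bar\partial u=\lambda P_tf\,e^{\lambda\psi_\varepsilon}\bar\partial\psi_\varepsilon .
\]
Since $P'$ is a contraction,
\[
\int|P_tf|^2e^{\lambda\psi_\varepsilon}\delta^t\le2\int|u|^2e^{-\lambda\psi_\varepsilon}\delta^t+2\int|f|^2e^{\lambda\psi_\varepsilon}\delta^t .
\]
Now apply Theorem~\ref{th:Chen} with the strictly psh weight $\lambda\psi_\varepsilon$. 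Using $|\bar\partial\psi_\varepsilon|^2_{i\partial\bar\partial(\lambda\psi_\varepsilon)}\le\lambda^{-1}$, this gives
\[
\int|u|^2e^{-\lambda\psi_\varepsilon}\delta^t\le C(t)\lambda\int|P_tf|^2e^{\lambda\psi_\varepsilon}\delta^t .
\]
The left side of the previous display is finite because $\psi_\varepsilon$ is bounded. So if $2C(t)\lambda\le\frac12$, we can absorb and get
\[
\int|P_tf|^2e^{\lambda\psi_\varepsilon}\delta^t\le4\int|f|^2e^{\lambda\psi_\varepsilon}\delta^t .
\]

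\emph{Conclusion and main obstacle.} Let $\varepsilon\downarrow0$ by monotone convergence, and compare $(-\rho)^{-\lambda}$ with $\delta^{-\eta}$. The comparison constants $c_1^{-\lambda},c_2^{\lambda}$ are uniformly bounded for $\lambda\le\lambda_0$, and this yields the claim. For uniformity in $t$, choose $\eta_0$ so small that $[t_0-2\eta_0,t_0+\eta_0]\Subset(-1,\infty)$, and invoke the Remark after Theorem~\ref{th:Chen} for $\sup_{t\in J}C(t)<\infty$. Then take $\lambda_0=\eta_0/\kappa\le 1/(4\sup_JC)$, shrinking $\eta_0$ if needed. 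The step I expect to be delicate is exactly this absorption. It needs a priori finiteness, which the bounded truncation $\psi_\varepsilon$ provides. It also needs the $\bar\partial$-estimate constant to be uniform in both $t$ and the auxiliary weight $\lambda\psi_\varepsilon$. This is the point where the uniformity statement in Theorem~\ref{th:Chen} and its Remark, whose constants are independent of $\varphi$, is essential.
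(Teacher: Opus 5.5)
Your proposal is correct and is essentially the paper's proof. Both use the Berndtsson--Charpentier twist by $\psi_\varepsilon=-\log(-\rho+\varepsilon)$, Chen's estimate with a constant independent of the bounded auxiliary weight, absorption, monotone convergence in $\varepsilon$, and the Diederich--Forn\ae ss comparison with $\lambda=\eta/\kappa$. Your twisted minimal solution $u$ is exactly $-e^{\lambda\psi_\varepsilon}u_t$ for the paper's $L^2_t$-minimal solution $u_t$ of $\bar\partial u=\bar\partial h$, so the difference is only bookkeeping: you absorb $\int|P_tf|^2e^{\lambda\psi_\varepsilon}\delta^t$ directly instead of routing the absorption through Proposition~\ref{prop:DFB}, and this also lets you treat every $f\in L^2_t(\Omega)$ without the density step.
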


\begin{proof}
By a theorem of Diederich--Forn\ae ss \cite{DiederichFornaess77}, there exists a negative $C^2$ strictly psh function $\rho$ on $\Omega$ such that
$
c_1\delta^\kappa\le -\rho \le c_2\delta^\kappa
$
for suitable $\kappa,c_1,c_2>0$; enlarging $c_2$ if necessary, assume
$c_2\geq c_1$.

Choose $0<\eta_*<(t_0+1)/2$ and set
\[
J_*=[t_0-\eta_*,t_0+\eta_*],\ \ \ 
C_*:=\sup_{t\in J_*}C(t)<\infty,\ \ \ 
\lambda_0:=\min\left\{1,\frac{1}{4C_*}\right\}.
\]
The finiteness of $C_*$ follows from Theorem~\ref{th:Chen} and its
uniformity remarks, and this choice makes
$0<\lambda\leq1/(4C(t))$ whenever $t\in J_*$ and
$0<\lambda\leq\lambda_0$.
For fixed $\varepsilon>0$, set
\[
\psi=\psi_\varepsilon=-\log(-\rho+\varepsilon),
\]
which satisfies \eqref{eq:DF}. For $\lambda>0$, let $P_{t,\lambda\psi}$ denote the Bergman projection on $L^2_t(\Omega,\lambda\psi)$.
We first take $f\in C_c^\infty(\Omega)$. For fixed $\varepsilon>0$,
the function $\psi_\varepsilon$ is bounded, so every expression below
belongs to the indicated $L^2$ space. The final estimate will be
extended to $L^2_{t-\eta}(\Omega)$ by density.

Since $\psi$ is bounded on $\Omega$, for any $g\in A^2_t(\Omega)$ we have
\begin{eqnarray*}
\int_\Omega P_t(f)\cdot\bar{g}\delta^t & = & \int_\Omega f\cdot \bar{g} \delta^t = \int_\Omega (e^{\lambda\psi} f)\cdot \bar{g} e^{-\lambda\psi} \delta^t\\
& = & \int_\Omega P_{t,\lambda\psi}(e^{\lambda\psi} f)\cdot\bar{g}e^{-\lambda\psi}\delta^t\\
& = & \int_\Omega P_t(e^{-\lambda\psi}P_{t,\lambda\psi}(e^{\lambda\psi} f))\cdot\bar{g}\delta^t.
\end{eqnarray*}
By uniqueness of the orthogonal projection,
\[
P_t(f)=P_t(e^{-\lambda\psi}P_{t,\lambda\psi}(e^{\lambda\psi}f)),\ \ \  f\in L^2_t(\Omega).
\]
Set $h=e^{-\lambda\psi}P_{t,\lambda\psi}(e^{\lambda\psi}f)$. Since $P_t(h)=h-u_t$, where $u_t$ is the $L^2_t(\Omega)$-minimal solution of $\bar\partial u=\bar\partial h$, Proposition \ref{prop:DFB} gives
\begin{eqnarray*}
\int_\Omega |P_t(f)|^2 e^{\lambda\psi}\delta^t & = & \int_\Omega |P_t(h)|^2 e^{\lambda\psi}\delta^t\\
&\le & 2 \int_\Omega |h|^2 e^{\lambda\psi}\delta^t+ 2\int_\Omega |u_t|^2 e^{\lambda\psi}\delta^t\\
& \le & 2\int_\Omega |P_{t,\lambda\psi}(e^{\lambda\psi}f)|^2 e^{-\lambda\psi}\delta^t
 + C_0' \int_\Omega |\bar\partial h |^2_{\lambda i\partial\bar\partial\psi} e^{\lambda\psi}\delta^t
\end{eqnarray*}
This holds for every $t\in J_*$ and $0<\lambda\leq\lambda_0$,
with the uniform choice $C_0'=8C_*$: indeed, the second term above
is twice the left-hand side of \eqref{eq:DonnellyFefferman}.
Since $\bar\partial h=-\lambda h \bar\partial\psi$, we have $|\bar\partial h |^2_{\lambda i\partial\bar\partial\psi}\le \lambda |h|^2$, and hence
\[
\int_\Omega |P_t(f)|^2 e^{\lambda\psi}\delta^t \le (2+\lambda C_0')\int_\Omega |P_{t,\lambda\psi}(e^{\lambda\psi}f)|^2 e^{-\lambda\psi}\delta^t\le (2+\lambda C_0')\int_\Omega |f|^2 e^{\lambda\psi}\delta^t.
\]
For $f\in C_c^\infty(\Omega)$, the right-hand side stays finite as
$\varepsilon\downarrow0$. Since
$e^{\lambda\psi_\varepsilon}=(-\rho+\varepsilon)^{-\lambda}$
increases pointwise to $(-\rho)^{-\lambda}$, the monotone
convergence theorem on both sides gives
\[
\int_\Omega |P_t(f)|^2 (-\rho)^{-\lambda}\delta^t \le (2+\lambda C_0')\int_\Omega |f|^2 (-\rho)^{-\lambda}\delta^t.
\]
Choose $\eta_0>0$ so that
\[
\eta_0\le\min\{\kappa\lambda_0,\eta_*\},\ \ \  t_0-2\eta_0>-1,
\]
and enlarge
$C_0=(2+\lambda_0C_0')(c_2/c_1)^{\lambda_0}$ if necessary so that $C_0\ge1$.
For $0<\eta\le\eta_0$, take $\lambda=\eta/\kappa$.
By $c_1\delta^\kappa\le-\rho\le c_2\delta^\kappa$,
\[
\|P_tf\|_{t-\eta}^2
\le (2+\lambda C_0')\left(\frac{c_2}{c_1}\right)^\lambda
       \|f\|_{t-\eta}^2
\le C_0\|f\|_{t-\eta}^2,
\ \ \  f\in C_c^\infty(\Omega).
\]
Density extends this estimate to $L^2_{t-\eta}(\Omega)$.
The continuous inclusion $L^2_{t-\eta}(\Omega)\hookrightarrow L^2_t(\Omega)$
and the boundedness of $P_t$ on $L^2_t$ identify this extension
with the original projection. The case $\eta=0$ follows from the contractivity of $P_t$
on $L^2_t$.
\end{proof}

We also use the following openness property for the Bergman kernel.

\begin{theorem}\label{th:Openness}
Let $\Omega$ be a bounded pseudoconvex domain with $C^2$ boundary, and let $S$ be a compact subset of $\Omega$. Given $t_0>-1$, there exists $\eta_0>0$ such that, for any $w\in S$ and $t\in [t_0-\eta_0,t_0+\eta_0]$,
\[
\int_\Omega |K_t(\cdot,w)|^2 \delta^{t-\eta_0}\le \widetilde C_0,
\]
where $\widetilde C_0$ depends only on $S$, $\Omega$, and $t_0$.
\end{theorem}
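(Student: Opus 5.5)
We prove Theorem~\ref{th:Openness} by realizing $K_t(\cdot,w)$ as $P_t$ applied to a compactly supported smooth function and then invoking Theorem~\ref{th:Projection}. The point is that such functions have finite norm in every weighted space, so no boundary information about $K_t(\cdot,w)$ itself is needed.

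First choose $\eta_0$. Let $\eta_1>0$ and $C_0$ be the constants of Theorem~\ref{th:Projection} for the given $t_0$, and set $\eta_0:=\eta_1$. Then $t\in[t_0-\eta_0,t_0+\eta_0]$ and $\eta=\eta_0$ are admissible there, and the proof of Theorem~\ref{th:Projection} gives $t_0-2\eta_0>-1$.

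Next fix $r>0$ with $S_r:=\{z:\operatorname{dist}(z,S)\le 2r\}\Subset\Omega$. Choose a radial $\chi\in C_c^\infty(B(0,r))$, $\chi\ge0$, with $\int\chi\,dV=1$. For $w\in S$ set
\[
\chi_{w,t}:=\frac{\chi(\cdot-w)}{\delta^t}.
\]
For $g\in A^2_t(\Omega)$ the mean value property gives
\[
\langle g,\chi_{w,t}\rangle_t=\int_\Omega g\,\chi(\cdot-w)\,dV=g(w)=\langle g,K_t(\cdot,w)\rangle_t .
\]
Since $\chi_{w,t}\in C_c^\infty(\Omega)\subset L^2_t(\Omega)$, uniqueness of the orthogonal projection yields
\[
P_t(\chi_{w,t})=K_t(\cdot,w).
\]
Note that $\chi_{w,t}$ is real, so no conjugation issue arises.

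Now apply Theorem~\ref{th:Projection} with $f=\chi_{w,t}$ and $\eta=\eta_0$:
\[
\int_\Omega|K_t(\cdot,w)|^2\delta^{t-\eta_0}\le C_0\int_\Omega \chi(\cdot-w)^2\,\delta^{-t-\eta_0}\,dV .
\]
On $\operatorname{supp}\chi(\cdot-w)\subset S_r$ the function $\delta$ lies between $\operatorname{dist}(S_r,\partial\Omega)>0$ and $\operatorname{diam}\Omega$. Since $|t|+\eta_0$ is bounded by $|t_0|+2\eta_0$, the factor $\delta^{-t-\eta_0}$ is bounded above uniformly in $w\in S$ and $t$. Hence the right-hand side is at most
\[
\widetilde C_0:=C_0\,\|\chi\|_{L^2}^2\,\max\bigl\{\operatorname{dist}(S_r,\partial\Omega)^{-|t_0|-2\eta_0},\,(\operatorname{diam}\Omega)^{|t_0|+2\eta_0}\bigr\},
\]
which depends only on $S$, $\Omega$, and $t_0$.

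The main obstacle is the uniformity of the constant $C_0$ in Theorem~\ref{th:Projection} over $t$ in the parameter interval. That theorem already supplies this, via the uniform bounds for Chen's constant $C(t)$ on compact intervals. With that in hand, the remaining step is the reproducing identity $P_t(\chi_{w,t})=K_t(\cdot,w)$, which is routine.
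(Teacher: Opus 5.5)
Your proof is correct and is essentially the paper's: both write $K_t(\cdot,w)=P_tf$ with $f$ supported in a fixed compact subset of $\Omega$ and apply Theorem~\ref{th:Projection} with $\eta=\eta_0$. The paper takes $f=\chi_0K_{\Omega_{\varepsilon_0},t}(\cdot,w)$ and bounds it by the submean inequality for the subdomain kernel, while your mollifier $\chi(\cdot-w)\delta^{-t}$ (the test function the paper later uses in \eqref{ra:kernel-test}) makes the constant explicit; one small slip is that $\chi_{w,t}$ need not be $C^\infty$, since $\delta$ is only Lipschitz away from $\partial\Omega$, but it is bounded with compact support, which is all Theorem~\ref{th:Projection} requires.
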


\begin{proof}
For $\varepsilon>0$, set $\Omega_\varepsilon=\{z\in \Omega: \delta(z)>\varepsilon\}$. Fix $\varepsilon_0>0$ such that $S\subset \Omega_{\varepsilon_0}$, and let $\chi_0$ be the characteristic function of $\Omega_{\varepsilon_0}$. Let $K_{\Omega_{\varepsilon_0},t}$ denote the Bergman kernel for the density $\delta_\Omega^t\,dV$ on $\Omega_{\varepsilon_0}$. Note that
\[
\int_\Omega \chi_0 K_{\Omega_{\varepsilon_0},t}(\cdot,w) \overline{K_t(\cdot,z)}\delta^t
=
\overline{\int_{\Omega_{\varepsilon_0}} K_t(\cdot,z) \overline{K_{\Omega_{\varepsilon_0},t}(\cdot,w)}\delta^t}
=K_t(z,w).
\]
Applying Theorem \ref{th:Projection} with $f=\chi_0K_{\Omega_{\varepsilon_0},t}(\cdot,w)$, we obtain
\[
\int_\Omega |K_t(\cdot,w)|^2 \delta^{t-\eta_0}= \int_\Omega |P_t(f)|^2 \delta^{t-\eta_0}\le C_0 \int_\Omega |f|^2 \delta^{t-\eta_0}
=C_0 \int_{\Omega_{\varepsilon_0}} |K_{\Omega_{\varepsilon_0},t}(\cdot,w)|^2 \delta^{t-\eta_0}\le \widetilde C_0
\]
for all $t\in [t_0-\eta_0,t_0+\eta_0]$ and $w\in S$.
Indeed, balls $B(w,r)\subset\Omega_{\varepsilon_0}$ of a common radius
$r>0$ exist for all $w\in S$, so the submean inequality and the
uniform positive lower bound for $\delta^t$ on
$\Omega_{\varepsilon_0}$ give
$K_{\Omega_{\varepsilon_0},t}(w,w)\leq C$ uniformly in $w,t$;
the reproducing norm identity and
$\delta^{-\eta_0}\leq\varepsilon_0^{-\eta_0}$ then bound the last
integral by $\varepsilon_0^{-\eta_0}C$.
\end{proof}

\section{Weights smooth up to the boundary}
\subsection{Parameter dependence}
Fix $z,w\in\Omega$, $t_0\in D$, and
$[t_0-\eta_0,t_0+\eta_0]\Subset D$.
In the proof of Theorem~\ref{th:Bounded-weight}, choose $C_0,R>0$ so that,
uniformly for $t\in[t_0-\eta_0,t_0+\eta_0]$,
\begin{equation}\label{es:Gevrey}
\sup_{\zeta\in\overline\Omega}|\varphi_t^{(j)}(\zeta)|
\le C_0R^j(j!)^s,\ \ \  j\ge1.
\end{equation}
The smoothness and derivative formula in Theorem~\ref{th:formula-1}
require only the bounds for each fixed finite number of derivatives.
We first verify the lower-order regularity following
\cite{ChenLecture}.
For a parameter-dependent function $f_t$, we write
\[
f_t^{(j)}:=\partial_t^j f_t\quad(j\in\mathbb N_0),\ \ \ 
\Delta f_t:=f_t-f_{t_0},\ \ \  \Delta t:=t-t_0.
\]
In particular, $f_t^{(0)}=f_t$ and
$K_{\varphi_t}^{(j)}:=\partial_t^jK_{\varphi_t}$.

Step 1. {\it Lipschitz continuity}. By the reproducing property of the Bergman kernel,
\begin{eqnarray}\label{eq:key-1}
 K_{\varphi_t}(z,w) & = & \int_\Omega K_{\varphi_t}(\cdot,w) \overline{K_{\varphi_{t_0}}(\cdot,z)} e^{-\varphi_{t_0}}\nonumber\\
 &  = & \int_\Omega K_{\varphi_t}(\cdot,w) \overline{K_{\varphi_{t_0}}(\cdot,z)} e^{-\varphi_t} \nonumber\\
 && + \int_\Omega K_{\varphi_t}(\cdot,w) \overline{K_{\varphi_{t_0}}(\cdot,z)} (e^{-\varphi_{t_0}}-e^{-\varphi_t})\nonumber\\
 & =: & K_{\varphi_{t_0}}(z,w)+I.
\end{eqnarray}
Since the weighted norms $\|\cdot\|_{\varphi_t}$ and $\|\cdot\|_{\varphi_{t_0}}$ are uniformly equivalent on the fixed parameter interval,
\begin{eqnarray*}
|I|  &\le& C|\Delta t| \, \|K_{\varphi_t}(\cdot,w)\|_{\varphi_t}\, \|K_{\varphi_{t_0}}(\cdot,z)\|_{\varphi_{t_0}} \\
&=& C|\Delta t| \, K_{\varphi_t}(w)^{\frac12}\, K_{\varphi_{t_0}}(z)^{\frac12}\\
& \le& C|\Delta t|.
\end{eqnarray*}
Here and below, $C$ denotes a generic constant depending only on $z,w,\Omega,t_0$ and the fixed parameter interval.

Step 2. {\it $C^1$ differentiability}. We write
\begin{eqnarray*}
 I & = & \int_\Omega K_{\varphi_{t_0}}(\cdot,w) \overline{K_{\varphi_{t_0}}(\cdot,z)} (e^{-\varphi_{t_0}}-e^{-\varphi_t})\\
 & &+ \int_\Omega (K_{\varphi_t}(\cdot,w)- K_{\varphi_{t_0}}(\cdot,w)) \overline{K_{\varphi_{t_0}}(\cdot,z)}
 (e^{-\varphi_{t_0}}-e^{-\varphi_t})\\
 & =: & II+III.
\end{eqnarray*}
By \eqref{eq:key-1},
\begin{eqnarray*}
|III| & \le & C |\Delta t|\, \int_\Omega |K_{\varphi_t}(\cdot,w)- K_{\varphi_{t_0}}(\cdot,w)|\,|K_{\varphi_{t_0}}(\cdot,z)| e^{-\varphi_{t_0}}\\
& \le & C|\Delta t| \, \|K_{\varphi_t}(\cdot,w)-K_{\varphi_{t_0}}(\cdot,w)\|_{\varphi_{t_0}} \, \|K_{\varphi_{t_0}}(\cdot,z)\|_{\varphi_{t_0}},
\end{eqnarray*}
while
\begin{eqnarray}\label{eq:integ-1}
&& \int_\Omega |K_{\varphi_t}(\cdot,w)-K_{\varphi_{t_0}}(\cdot,w)|^2 e^{-\varphi_{t_0}}\nonumber\\
& = & \int_\Omega |K_{\varphi_t}(\cdot,w)|^2 e^{-\varphi_{t_0}} + \int_\Omega |K_{\varphi_{t_0}}(\cdot,w)|^2 e^{-\varphi_{t_0}}\nonumber\\
&& -2\operatorname{Re} \int_\Omega K_{\varphi_t}(\cdot,w)\overline{K_{\varphi_{t_0}}(\cdot,w)} e^{-\varphi_{t_0}}\nonumber\\
& = &K_{\varphi_t}(w)(1+O(|\Delta t|))+K_{\varphi_{t_0}}(w) - 2 K_{\varphi_t}(w).
\end{eqnarray}
It follows from Step 1 that
\[
 | III | \le C |\Delta t|^{3/2}.
\]
Therefore,
\begin{eqnarray*}
\lim_{t\to t_0} \frac{K_{\varphi_t}(z,w)-K_{\varphi_{t_0}}(z,w)}{\Delta t} & = & \lim_{t\to t_0} \int_\Omega K_{\varphi_{t_0}}(\cdot,w)\overline{K_{\varphi_{t_0}}(\cdot,z)} \, \frac{e^{-\varphi_{t_0}}-e^{-\varphi_t} }{\Delta t}\\
& = & \int_\Omega K_{\varphi_{t_0}}(\cdot,w) \overline{K_{\varphi_{t_0}}(\cdot,z)} \varphi_{t_0}'\, e^{-\varphi_{t_0}},
\end{eqnarray*}
by the dominated convergence theorem. Thus $K_{\varphi_t}(z,w)$ is $C^1$ in $t$, and
\begin{eqnarray}\label{eq:C^1-1}
K_{\varphi_t}^{(1)}(z,w)  &  = & \int_\Omega K_{\varphi_t}(\cdot,w) \overline{K_{\varphi_t}(\cdot,z)} \varphi_t' \,e^{-\varphi_t}\nonumber\\
& = & P_{\varphi_t}\left(-K_{\varphi_t}(\cdot,w)\,(-\varphi_t)'\right)(z)\nonumber\\
& = & P_{\varphi_t,1}\bigl(K_{\varphi_t}(\cdot,w)\bigr)(z).
\end{eqnarray}

Step 3. {\it $C^2$ differentiability}. By \eqref{eq:C^1-1},
\[
\Delta K_{\varphi_t}^{(1)}(z,w) = \int_\Omega K_{\varphi_t}(\cdot,w)\overline{K_{\varphi_t}(\cdot,z)}\varphi_t'\,e^{-\varphi_t}
- \int_\Omega K_{\varphi_{t_0}}(\cdot,w)\overline{K_{\varphi_{t_0}}(\cdot,z)}\varphi_{t_0}'\,e^{-\varphi_{t_0}}.
\]
Set $g_{t,1}:=\varphi_t'\,e^{-\varphi_t}$. Then
\begin{eqnarray*}
&& K_{\varphi_t}(\zeta,w)\overline{K_{\varphi_t}(\zeta,z)} \, g_{t,1}(\zeta) \\
& = & \left(K_{\varphi_{t_0}}(\zeta,w)+\Delta K_{\varphi_t}(\zeta,w)\right) \overline{(K_{\varphi_{t_0}}(\zeta,z)+\Delta K_{\varphi_t}(\zeta,z))}\,(g_{t_0,1}+\Delta g_{t,1})\\
& = & \left(K_{\varphi_{t_0}}(\zeta,w)\overline{K_{\varphi_{t_0}}(\zeta,z)}+ K_{\varphi_{t_0}}(\zeta,w)\overline{\Delta K_{\varphi_t}(\zeta,z)}+\Delta K_{\varphi_t}(\zeta,w)
\overline{K_{\varphi_{t_0}}(\zeta,z)}\right) g_{t_0,1}\\
&& + K_{\varphi_{t_0}}(\zeta,w) \overline{K_{\varphi_{t_0}}(\zeta,z)}\, \Delta g_{t,1} + \mathcal R(z,w,\zeta,t),
\end{eqnarray*}
where $\mathcal R(z,w,\zeta,t)$ denotes the remainder. Hence
\begin{eqnarray*}
\Delta K_{\varphi_t}^{(1)}(z,w) & = & \int_\Omega K_{\varphi_{t_0}}(\cdot,w)\overline{\Delta K_{\varphi_t}(\cdot,z)}\, g_{t_0,1} + \int_\Omega \Delta K_{\varphi_t}(\cdot,w) \overline{K_{\varphi_{t_0}}(\cdot,z)}\, g_{t_0,1}\\
& & + \int_\Omega K_{\varphi_{t_0}}(\cdot,w) \overline{K_{\varphi_{t_0}}(\cdot,z)}\, \Delta g_{t,1} + \int_\Omega \mathcal R(z,w,\cdot,t).
\end{eqnarray*}
By \eqref{eq:key-1},
\[
\Delta K_{\varphi_t}(\zeta,w) =-\int_\Omega K_{\varphi_t}(\cdot,w) \overline{K_{\varphi_{t_0}}(\cdot,\zeta)} (\Delta e^{-\varphi_t}).
\]
Therefore,
\begin{eqnarray*}
&& \frac{\Delta K_{\varphi_t}(\zeta,w)}{\Delta t} - K_{\varphi_{t_0}}^{(1)}(\zeta,w)\\
& = & \int_\Omega (K_{\varphi_{t_0}}(\cdot,w)-K_{\varphi_t}(\cdot,w)) \overline{K_{\varphi_{t_0}}(\cdot,\zeta)} \, \frac{\Delta e^{-\varphi_t}}{\Delta t}\\
&& + \int_\Omega K_{\varphi_{t_0}}(\cdot,w) \overline{K_{\varphi_{t_0}}(\cdot,\zeta)}\left(
\left.\partial_t e^{-\varphi_t}\right|_{t=t_0}
-\frac{\Delta e^{-\varphi_t}}{\Delta t}
\right).
\end{eqnarray*}
Since the norm of the Bergman projection is at most one,
\begin{align*}
&\left\|\frac{\Delta K_{\varphi_t}(\cdot,w)}{\Delta t}
       -K_{\varphi_{t_0}}^{(1)}(\cdot,w)\right\|_{\varphi_{t_0}}\\
&\quad\le
 \left\|\frac{\Delta e^{-\varphi_t}}{\Delta t}
                      e^{\varphi_{t_0}}\right\|_{L^\infty(\Omega)}
 \|K_{\varphi_t}(\cdot,w)-K_{\varphi_{t_0}}(\cdot,w)\|_{\varphi_{t_0}}\\
&\ \ \ +
 \left\|\left(
 \left.\partial_t e^{-\varphi_t}\right|_{t=t_0}
       -\frac{\Delta e^{-\varphi_t}}{\Delta t}
 \right)e^{\varphi_{t_0}}\right\|_{L^\infty(\Omega)}
 \|K_{\varphi_{t_0}}(\cdot,w)\|_{\varphi_{t_0}}
 \longrightarrow0 .
\end{align*}
Consequently,
\[
\int_\Omega \frac{\Delta K_{\varphi_t}(\cdot,w)}{\Delta t} \overline{K_{\varphi_{t_0}}(\cdot,z)}g_{t_0,1}  \to  \int_\Omega K_{\varphi_{t_0}}^{(1)}(\cdot,w) \overline{K_{\varphi_{t_0}}(\cdot,z)}g_{t_0,1}.
\]
Similarly,
\[
\int_\Omega K_{\varphi_{t_0}}(\cdot,w) \overline{\frac{\Delta K_{\varphi_t}(\cdot,z)}{\Delta t}} g_{t_0,1} \to  \int_\Omega K_{\varphi_{t_0}}(\cdot,w) \overline{K_{\varphi_{t_0}}^{(1)}(\cdot,z)}g_{t_0,1},
\]
\[
\int_\Omega K_{\varphi_{t_0}}(\cdot,w) \overline{K_{\varphi_{t_0}}(\cdot,z)} \, \frac{\Delta g_{t,1}}{\Delta t} \to  \int_\Omega K_{\varphi_{t_0}}(\cdot,w) \overline{K_{\varphi_{t_0}}(\cdot,z)} \left.\partial_t g_{t,1}\right|_{t=t_0},
\]
and $\int_\Omega \mathcal R(z,w,\cdot,t)/\Delta t\to 0$ as $t\to t_0$. Thus $K_{\varphi_t}(z,w)$ is $C^2$ in $t$, and
\begin{eqnarray*}
K_{\varphi_t}^{(2)}(z,w) & = & \int_\Omega K_{\varphi_t}(\cdot,w) \overline{K_{\varphi_t}^{(1)}(\cdot,z)}\varphi_t'\, e^{-\varphi_t} \\
& & +\int_\Omega K_{\varphi_t}^{(1)}(\cdot,w)  \overline{K_{\varphi_t}(\cdot,z)}\varphi_t' \,e^{-\varphi_t}\\
&& + \int_\Omega K_{\varphi_t}(\cdot,w) \overline{K_{\varphi_t}(\cdot,z)} (\varphi_t''-(\varphi_t')^2) \,e^{-\varphi_t} \\
& =: & I_1(z,w)+I_2(z,w)+I_3(z,w).
\end{eqnarray*}
Therefore,
\[
I_2(z,w)=P_{\varphi_t}\left( K_{\varphi_t}^{(1)}(\cdot,w) \varphi_t' \right) (z)=P_{\varphi_t,(1,1)}\bigl(K_{\varphi_t}(\cdot,w)\bigr)(z),
\]
\[
I_3(z,w)=P_{\varphi_t} \left(-K_{\varphi_t}(\cdot,w)B_2(-\varphi_t) \right)(z).
\]
Selfadjointness of $P_{\varphi_t}$ and the reality of $\varphi_t'$
give
\begin{align*}
I_1(z,w)
 &=\left\langle \varphi_t'K_{\varphi_t}(\cdot,w),
        P_{\varphi_t}(\varphi_t'K_{\varphi_t}(\cdot,z))
                      \right\rangle_{\varphi_t}\\
 &=\left\langle P_{\varphi_t}(\varphi_t'K_{\varphi_t}(\cdot,w)),
        \varphi_t'K_{\varphi_t}(\cdot,z)\right\rangle_{\varphi_t}\\
 &=P_{\varphi_t}
   \left(\varphi_t'P_{\varphi_t}
                 (\varphi_t'K_{\varphi_t}(\cdot,w))\right)(z)
 =P_{\varphi_t,(1,1)}(K_{\varphi_t}(\cdot,w))(z).
\end{align*}
All factors belong to $L^2(\Omega,\varphi_t)$ because $\varphi_t'$ is bounded.
Together with the formulas for $I_2$ and $I_3$, this agrees with \eqref{fo:derivation-1} for $m=2$.

Step 4. {\it Gevrey regularity}. Since $P_{\varphi_t}$ is an orthogonal projection and the multipliers defining $P_{\varphi_t,\alpha}$ are bounded for smooth weights, we have
\begin{eqnarray}\label{inq:smooth-estimate}
\|P_{\varphi_t,\alpha}(f)\|_{\varphi_t}\le C_{t_0,\alpha} \|f\|_{\varphi_t}<\infty,\ \ \  f\in L^2(\Omega,\varphi_t),
\end{eqnarray}
for $t\in[t_0-\eta_0,t_0+\eta_0]$. Here $C_{t_0,\alpha}$ may depend on $\alpha$; the uniform dependence on $|\alpha|$ required below follows from the Gevrey estimate \eqref{es:Gevrey}. We obtain the following lemma.
\begin{lemma}\label{le:Openness-1}
Assume \eqref{es:Gevrey}. For any $t\in[t_0-\eta_0,t_0+\eta_0]$, fixed $w\in\Omega$, and $\alpha\in\Lambda_{m,r}$,
\begin{eqnarray}\label{inq:gevrey-estimate}
\int_\Omega |P_{\varphi_t,\alpha}(K_{\varphi_t}(\cdot,w))|^2 \,e^{-\varphi_t}
\le C R_0^{2|\alpha|}(\alpha!)^{2s},
\end{eqnarray}
where $C$ and $R_0$ depend only on $w,t_0$ and the fixed parameter interval.
\end{lemma}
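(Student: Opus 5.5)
The plan is to bound each factor $P_{\varphi_t,\alpha_j}$ separately in operator norm on $L^2(\Omega,\varphi_t)$, multiply the bounds, and then apply the result to $K_{\varphi_t}(\cdot,w)$. Everything reduces to a pointwise bound on the multipliers $B_k(-\varphi_t)$ that is uniform in $\zeta\in\Omega$ and in $t$.

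\textbf{Step 1: the multipliers.} For $k\in\mathbb N_+$ and $t\in[t_0-\eta_0,t_0+\eta_0]$, the Gevrey hypothesis \eqref{es:Gevrey} gives
\[
|\partial_t^j(-\varphi_t)(\zeta)|\le C_0R^j(j!)^s, \qquad \zeta\in\overline\Omega,\ 1\le j\le k.
\]
These are exactly the hypotheses of the Bell polynomial estimate \eqref{eq:Bell-polynomial} with $x_j=\partial_t^j(-\varphi_t)(\zeta)$. Hence
\[
\sup_{\zeta\in\Omega}|B_k(-\varphi_t)(\zeta)|\le e\,(2C_1R)^k(k!)^s, \qquad C_1=\max\{1,C_0\}.
\]

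\textbf{Step 2: one factor at a time.} Since $P_{\varphi_t}$ is an orthogonal projection on $L^2(\Omega,\varphi_t)$, for every $f\in L^2(\Omega,\varphi_t)$ we get
\[
\|P_{\varphi_t,k}f\|_{\varphi_t}\le\|fB_k(-\varphi_t)\|_{\varphi_t}\le e(2C_1R)^k(k!)^s\|f\|_{\varphi_t}.
\]
The output again lies in $A^2(\Omega,\varphi_t)\subset L^2(\Omega,\varphi_t)$, so the composition $P_{\varphi_t,\alpha}$ is well defined. Iterating over $\alpha=(\alpha_1,\dots,\alpha_r)\in\Lambda_{m,r}$ gives
\[
\|P_{\varphi_t,\alpha}f\|_{\varphi_t}\le e^r(2C_1R)^{|\alpha|}(\alpha!)^s\|f\|_{\varphi_t}.
\]
Because $r\le|\alpha|$, the factor $e^r$ is absorbed by setting $R_0:=2eC_1R$. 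The operator bound is then $R_0^{|\alpha|}(\alpha!)^s$.

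\textbf{Step 3: apply to the kernel.} Take $f=K_{\varphi_t}(\cdot,w)$, whose norm satisfies $\|f\|_{\varphi_t}^2=K_{\varphi_t}(w)$. It remains to bound $K_{\varphi_t}(w)$ uniformly for $t$ in the interval. Since $\varphi$ is bounded on $\overline\Omega\times[t_0-\eta_0,t_0+\eta_0]$, we have $e^{-\varphi_t}\ge c>0$ there. The submean value inequality on a ball $B(w,r)\Subset\Omega$ then gives
\[
|f(w)|^2\le\frac{C}{c\,r^{2n}}\|f\|_{\varphi_t}^2 \quad\text{for all } f\in A^2(\Omega,\varphi_t),
\]
so $K_{\varphi_t}(w)\le C_w$ independently of $t$. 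Squaring the bound from Step 2 yields \eqref{inq:gevrey-estimate} with $C=C_w$.

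\textbf{Main obstacle.} The only delicate point is bookkeeping. The bound \eqref{eq:Bell-polynomial} must be applied with constants uniform in $\zeta$ and $t$, and the $r$-dependent factor $e^r$ must be absorbed into $R_0^{|\alpha|}$. This absorption is harmless precisely because each $\alpha_j\ge1$, which forces $r\le|\alpha|$.
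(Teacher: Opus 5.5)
Your proof is correct and follows the paper's argument. Both use the Bell-polynomial bound \eqref{eq:Bell-polynomial} with $R_0=2e\max\{1,C_0\}R$, absorbing the factor $e$ because each $\alpha_j\ge1$. Both then apply contractivity of $P_{\varphi_t}$ factor by factor and finish with $\|K_{\varphi_t}(\cdot,w)\|_{\varphi_t}^2=K_{\varphi_t}(w)$.

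The only small difference is how you bound $K_{\varphi_t}(w)$ uniformly in $t$. You do it directly, via the submean value inequality and the uniform lower bound $e^{-\varphi_t}\ge c>0$. The paper instead invokes continuity of $t\mapsto K_{\varphi_t}(w)$. Your route is slightly more self-contained.
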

\begin{proof}
By \eqref{es:Gevrey},
\[
\left|\varphi_t^{(\alpha_j)}(z)\right|\le C_0 R^{\alpha_j} (\alpha_j!)^s,\ \ \  j=1,2,\ldots,r,
\]
for $t\in[t_0-\eta_0,t_0+\eta_0]$ and $z\in\Omega$. It follows from \eqref{eq:Bell-polynomial} that
\[
\left|B_{\alpha_j}(-\varphi_t)\right|\le R_0^{\alpha_j} (\alpha_j!)^s,\ \ \  j=1,2,\ldots,r,
\]
where $R_0=2e\max\{1,C_0\}R$.
Using the contractivity of the orthogonal projection,
\begin{eqnarray*}
\int_\Omega |P_{\varphi_t,\alpha}(K_{\varphi_t}(\cdot,w))|^2 \,e^{-\varphi_t}
&\le&
\int_\Omega |P_{\varphi_t,\alpha'}(K_{\varphi_t}(\cdot,w))|^2
|B_{\alpha_r}(-\varphi_t)|^2
 e^{-\varphi_t}\\
&\le&
R_0^{2\alpha_r} (\alpha_r!)^{2s}
 \int_\Omega |P_{\varphi_t,\alpha'}(K_{\varphi_t}(\cdot,w))|^2  \,e^{-\varphi_t},
\end{eqnarray*}
where $\alpha'=(\alpha_1,\ldots,\alpha_{r-1})$. Iterating this estimate gives
\begin{eqnarray*}
\int_\Omega |P_{\varphi_t,\alpha}(K_{\varphi_t}(\cdot,w))|^2 \,e^{-\varphi_t}
&\le&
K_{\varphi_t}(w)\prod_{j=1}^{r} R_0^{2\alpha_j} (\alpha_j!)^{2s}
=K_{\varphi_t}(w)R_0^{2|\alpha|}(\alpha!)^{2s}.
\end{eqnarray*}
By continuity of $K_{\varphi_t}(w)$ in $t$, it is uniformly bounded on $[t_0-\eta_0,t_0+\eta_0]$.
\end{proof}

We also use the following commutation identity.
\begin{lemma}\label{le:commutation-1}
Let $\chi_V$ be the characteristic function of a measurable set $V\subset\Omega$, and let $f\in A^2(\Omega,\varphi_t)$. For any ordered composition $\alpha\in\Lambda_{m,r}$ and $k\in\mathbb N_+$,
\begin{eqnarray*}
\int_\Omega
\chi_V f\cdot \overline{P_{\varphi_t,\alpha}(K_{\varphi_t}(\cdot,z))} B_{k}(-\varphi_t)
\,e^{-\varphi_t}
=-P_{\varphi_t,\alpha^-}\circ P_{\varphi_t,k}(\chi_V f)(z),
\end{eqnarray*}
where $\alpha^-=(\alpha_r,\alpha_{r-1},\ldots,\alpha_1)$ and $t\in [t_0-\eta_0,t_0+\eta_0]$.
\end{lemma}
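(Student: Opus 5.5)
The plan is to rewrite the integral as an inner product in $L^2(\Omega,\varphi_t)$ and move the operators $P_{\varphi_t,\alpha_j}$ one at a time from the second slot to the first, using the self-adjointness of $P_{\varphi_t}$ and the reality of the multipliers $B_j(-\varphi_t)$. Throughout, $\langle\cdot,\cdot\rangle$ denotes the inner product of $L^2(\Omega,\varphi_t)$. Since $\varphi$ is smooth up to the boundary, \eqref{es:Gevrey} shows that each $B_j(-\varphi_t)$ is a bounded real function on $\Omega$. Hence $\chi_Vf\,B_k(-\varphi_t)\in L^2(\Omega,\varphi_t)$. By \eqref{inq:smooth-estimate}, $h:=P_{\varphi_t,\alpha}(K_{\varphi_t}(\cdot,z))$ lies in $A^2(\Omega,\varphi_t)$, because its last operation is $P_{\varphi_t}$. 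So the left-hand side is the well-defined inner product $\langle \chi_Vf\,B_k(-\varphi_t),h\rangle$.

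\emph{First step.} Since $h=P_{\varphi_t}h$ and $P_{\varphi_t}$ is self-adjoint, we may insert $P_{\varphi_t}$ in the first slot:
\[
\langle \chi_Vf\,B_k(-\varphi_t),h\rangle=\langle P_{\varphi_t}(\chi_Vf\,B_k(-\varphi_t)),h\rangle=-\langle P_{\varphi_t,k}(\chi_Vf),h\rangle .
\]
Call $u_0:=P_{\varphi_t,k}(\chi_Vf)\in A^2(\Omega,\varphi_t)$.

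\emph{Peeling step.} For holomorphic $u\in A^2(\Omega,\varphi_t)$, $g\in A^2(\Omega,\varphi_t)$ and $j\in\mathbb N_+$, I claim
\[
\langle u,P_{\varphi_t,j}g\rangle=\langle P_{\varphi_t,j}u,g\rangle .
\]
To see this, write the left side as $\langle u,P_{\varphi_t}(-gB_j)\rangle$. Since $u=P_{\varphi_t}u$, this equals $\langle u,-gB_j\rangle$. Because $B_j$ is real, this is $\langle -uB_j,g\rangle$. Finally, since $g$ is holomorphic, this equals $\langle P_{\varphi_t}(-uB_j),g\rangle=\langle P_{\varphi_t,j}u,g\rangle$.

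\emph{Iteration.} Apply the peeling step successively with $j=\alpha_r,\alpha_{r-1},\dots,\alpha_1$. At each stage, take $g=P_{\varphi_t,(\alpha_1,\dots,\alpha_{i-1})}K_{\varphi_t}(\cdot,z)$, which is holomorphic, or $g=K_{\varphi_t}(\cdot,z)$ at the last stage. This gives
\[
-\langle u_0,h\rangle=-\langle P_{\varphi_t,\alpha_1}\circ\cdots\circ P_{\varphi_t,\alpha_r}u_0,\;K_{\varphi_t}(\cdot,z)\rangle .
\]
By the definition $P_{\varphi_t,\beta}=P_{\varphi_t,\beta_r}\circ\cdots\circ P_{\varphi_t,\beta_1}$, the operator here is exactly $P_{\varphi_t,\alpha^-}$. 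The reproducing property then yields $-P_{\varphi_t,\alpha^-}\circ P_{\varphi_t,k}(\chi_Vf)(z)$, as claimed.

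The only point needing care is bookkeeping rather than analysis. One must check that every function entering an inner product lies in $L^2(\Omega,\varphi_t)$, which holds by boundedness of the $B_j$ and contractivity of $P_{\varphi_t}$. One must also check that at each use of $P_{\varphi_t}$-invariance the relevant function is already holomorphic. I also expect the reversal of the order of the composition to be where sign and ordering errors are most likely, so I would verify the case $r=2$ explicitly.
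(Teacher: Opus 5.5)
Your proposal is correct and follows essentially the same argument as the paper. You rewrite the integral as $\langle \chi_V f\,B_k(-\varphi_t),h\rangle_{\varphi_t}$ and use $P_{\varphi_t}h=h$ to get $-\langle P_{\varphi_t,k}(\chi_Vf),h\rangle_{\varphi_t}$. You then transfer $P_{\varphi_t,\alpha_r},\dots,P_{\varphi_t,\alpha_1}$ one at a time via the symmetry $\langle P_{\varphi_t,j}u,v\rangle_{\varphi_t}=\langle u,P_{\varphi_t,j}v\rangle_{\varphi_t}$ on $A^2(\Omega,\varphi_t)$, which comes from the reality of $B_j(-\varphi_t)$, and finish with the reproducing property; your ordering check for $P_{\varphi_t,\alpha^-}$ is also right.

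One cosmetic remark: boundedness of the finitely many multipliers $B_j(-\varphi_t)$ already follows from $\varphi\in C^\infty(\overline\Omega\times D)$. So you do not need to invoke \eqref{es:Gevrey}, which in Section~3 is assumed only for the proof of Theorem~\ref{th:Bounded-weight}.
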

\begin{proof}
For $g\in L^2(\Omega,\varphi_t)$ and
$h\in A^2(\Omega,\varphi_t)$, the multiplier
$B_j(-\varphi_t)$ is bounded, so
\[
|\langle gB_j(-\varphi_t),h\rangle_{\varphi_t}|
\le \|B_j(-\varphi_t)\|_{L^\infty(\Omega)}
   \|g\|_{\varphi_t}\|h\|_{\varphi_t}<\infty.
\]
Since $P_{\varphi_t}h=h$ and
$P_{\varphi_t,j}g=-P_{\varphi_t}(gB_j(-\varphi_t))$,
\[
\langle gB_j(-\varphi_t),h\rangle_{\varphi_t}
=\langle P_{\varphi_t}(gB_j(-\varphi_t)),h\rangle_{\varphi_t}
=-\langle P_{\varphi_t,j}g,h\rangle_{\varphi_t}.
\]
For $u,v\in A^2(\Omega,\varphi_t)$, the reality of
$B_j(-\varphi_t)$ also gives
\[
\langle P_{\varphi_t,j}u,v\rangle_{\varphi_t}
=-\langle B_j(-\varphi_t)u,v\rangle_{\varphi_t}
=-\langle u,B_j(-\varphi_t)v\rangle_{\varphi_t}
=\langle u,P_{\varphi_t,j}v\rangle_{\varphi_t}.
\]
In the first transfer below, take $g=\chi_Vf$ and
$h=P_{\varphi_t,\alpha}(K_{\varphi_t}(\cdot,z))$;
the subsequent transfers use the second identity.
Consequently,
\begin{align*}
&\left\langle \chi_V f B_k(-\varphi_t),
 P_{\varphi_t,\alpha}(K_{\varphi_t}(\cdot,z))
 \right\rangle_{\varphi_t}\\
&\quad=-\left\langle P_{\varphi_t,k}(\chi_V f),
 P_{\varphi_t,\alpha}(K_{\varphi_t}(\cdot,z))
 \right\rangle_{\varphi_t}\\
&\quad=-\left\langle
 P_{\varphi_t,\alpha^-}\circ P_{\varphi_t,k}(\chi_V f),
 K_{\varphi_t}(\cdot,z)\right\rangle_{\varphi_t}\\
&\quad=-P_{\varphi_t,\alpha^-}\circ P_{\varphi_t,k}(\chi_V f)(z).
\end{align*}
Each transfer is justified by \eqref{inq:smooth-estimate}.
The identity also holds for the empty $\alpha$, with its composition
equal to the identity.
\end{proof}

Assuming \eqref{fo:derivation-1}, we first derive the Gevrey estimates
in Theorem~\ref{th:Bounded-weight}. For $m\ge1$,
Lemma~\ref{le:Openness-1} gives
\begin{eqnarray*}
\left\| K_{\varphi_t}^{(m)}(\cdot,w)\right\|_{\varphi_t}
&\le&
\sum_{\alpha\in\Lambda_m}\frac{m!}{\alpha!} \|P_{\varphi_t,\alpha}(K_{\varphi_t}(\cdot,w))\|_{\varphi_t}\\
&\le&
\sum_{\alpha\in\Lambda_m}m! \sqrt{C}R_0^{|\alpha|}(\alpha!)^{s-1}
\le \sqrt{C}(2R_0)^m (m!)^s.
\end{eqnarray*}
Indeed, $|\alpha|=m$, $\alpha!\le m!$, and $\#\Lambda_m=2^{m-1}$. Moreover, by boundedness of point evaluation,
\[
\left|
K_{\varphi_t}^{(m)}(z,w)
\right|
\le
K_{\varphi_t}(z,z)^{1/2}
\left\|
K_{\varphi_t}^{(m)}(\cdot,w)
\right\|_{\varphi_t}
\le C_{z,w,t_0}R_1^m(m!)^s.
\]
For every $S\Subset\Omega$, the uniform equivalence of the weighted
norms gives
\[
\sup_{\substack{t\in[t_0-\eta_0,t_0+\eta_0]\\w\in S}}
K_{\varphi_t}(w)<\infty.
\]
Hence the preceding estimates hold uniformly for $z,w\in S$.

Interior Cauchy estimates on a slightly larger compact set give the
corresponding bounds for all spatial derivatives of
$K_{\varphi_t}^{(m)}$. Once the induction below establishes the
existence and continuity in $t$ of these parameter derivatives in a
fixed weighted $L^2$ norm, interior evaluation and Cauchy estimates
give joint continuity of every mixed space--parameter derivative on
compact subsets of $\Omega\times\Omega\times D$. Hence the kernel is
jointly smooth, and it remains to prove \eqref{fo:derivation-1} for
every positive integer $m$.

\subsection{Proof of Theorem \ref{th:formula-1}}
We prove \eqref{fo:derivation-1} together with the following
Leibniz formula and convergence statement by induction:
\begin{eqnarray}\label{fo:Leibniz-1}
K_{\varphi_t}^{(m)}(z,w)
=
-\sum\frac{(m-1)!}{m_1!m_2!m_3!}
\int_\Omega
B_{m_3+1}(-\varphi_t)
K_{\varphi_t}^{(m_1)}(\cdot,w)
\overline{K_{\varphi_t}^{(m_2)}(\cdot,z)}
\,e^{-\varphi_t},
\end{eqnarray}
and
\begin{eqnarray}\label{fo:estimate-1}
\left\|
\frac{\Delta K_{\varphi_t}^{(m-2)}(\cdot,w)}{\Delta t}
-K_{\varphi_{t_0}}^{(m-1)}(\cdot,w)
\right\|_{\varphi_{t_0}}
\longrightarrow0
\ \ \ (t\to t_0),\quad m\ge3,
\end{eqnarray}
where the sum in \eqref{fo:Leibniz-1} is taken over
$(m_1,m_2,m_3)\in\mathbb N_0^3$ satisfying
$m_1+m_2+m_3=m-1$.

The cases $m=1,2$ of \eqref{fo:Leibniz-1} and
\eqref{fo:derivation-1} follow from Steps~2 and~3.
Fix $p\ge2$ and assume inductively that the asserted formulas
and convergence statements hold up to order $p$.
Then
\begin{eqnarray*}
\Delta K_{\varphi_t}^{(p)}(z,w)
&=&
-\sum \frac{(p-1)!}{m_1!m_2!m_3!}
\int_\Omega
B_{m_3+1}(-\varphi_t)
K_{\varphi_t}^{(m_1)}(\cdot,w)
\overline{K_{\varphi_t}^{(m_2)}(\cdot,z)}
e^{-\varphi_t}\\
&&+
\sum \frac{(p-1)!}{m_1!m_2!m_3!}
\int_\Omega
B_{m_3+1}(-\varphi_{t_0})
K_{\varphi_{t_0}}^{(m_1)}(\cdot,w)
\overline{K_{\varphi_{t_0}}^{(m_2)}(\cdot,z)}
e^{-\varphi_{t_0}}.
\end{eqnarray*}
Set
\[
g_{t,k}:=-\partial_t^k(e^{-\varphi_t})
=-B_k(-\varphi_t)e^{-\varphi_t},
\ \ \  k\ge1.
\]
Using
\[
K_{\varphi_t}^{(m)}(z,w)
=
K_{\varphi_{t_0}}^{(m)}(z,w)
+\Delta K_{\varphi_t}^{(m)}(z,w),
\ \ \  0\le m\le p,
\]
we obtain
\begin{eqnarray}\label{fo:difference}
\Delta K_{\varphi_t}^{(p)}(z,w)
&=&
\sum \frac{(p-1)!}{m_1!m_2!m_3!}
\int_\Omega
K_{\varphi_{t_0}}^{(m_1)}(\cdot,w)
\overline{\Delta K_{\varphi_t}^{(m_2)}(\cdot,z)}
g_{t_0,m_3+1}
\nonumber\\
&&+
\sum \frac{(p-1)!}{m_1!m_2!m_3!}
\int_\Omega
\Delta K_{\varphi_t}^{(m_1)}(\cdot,w)
\overline{K_{\varphi_{t_0}}^{(m_2)}(\cdot,z)}
g_{t_0,m_3+1}
\nonumber\\
&&+
\sum \frac{(p-1)!}{m_1!m_2!m_3!}
\int_\Omega
K_{\varphi_{t_0}}^{(m_1)}(\cdot,w)
\overline{K_{\varphi_{t_0}}^{(m_2)}(\cdot,z)}
\Delta g_{t,m_3+1}
\nonumber\\
&&+\int_\Omega \mathcal R(z,w,\cdot,t),
\end{eqnarray}
where the sums range over $m_1+m_2+m_3=p-1$ with
$m_1,m_2,m_3\ge0$.

The regularity of $\varphi_t$ gives
\[
\left\|
\frac{\Delta g_{t,m_3+1}}{\Delta t}
-g_{t_0,m_3+2}
\right\|_{L^\infty(\Omega)}
\longrightarrow0
\ \ \ (t\to t_0).
\]
For $1\le q\le p$, put
\[
\mathcal E(\zeta,w,t;q)
:=
\frac{\Delta K_{\varphi_t}^{(q-1)}(\zeta,w)}{\Delta t}
-K_{\varphi_{t_0}}^{(q)}(\zeta,w).
\]
To prove \eqref{fo:estimate-1} for $m=p+1$, it is enough
to show that
\[
\|\mathcal E(\cdot,w,t;p)\|_{\varphi_{t_0}}
\longrightarrow0,
\ \ \ 
\|\overline{\mathcal E(\cdot,z,t;p)}\|_{\varphi_{t_0}}
\longrightarrow0.
\]

To justify the convergence in the weighted norm, observe
that orthogonality gives
\[
(P_{\varphi_t}-P_{\varphi_{t_0}})f
=
P_{\varphi_t}\left(
(1-e^{\varphi_t-\varphi_{t_0}})
(f-P_{\varphi_{t_0}}f)
\right),
\ \ \  f\in L^2(\Omega,\varphi_{t_0}).
\]
The uniform equivalence of the weighted norms therefore
yields
\[
\|(P_{\varphi_t}-P_{\varphi_{t_0}})f\|_{\varphi_{t_0}}
\le C|\Delta t|\|f\|_{\varphi_{t_0}}.
\]
Using the induction hypothesis \eqref{fo:derivation-1},
the boundedness and $L^\infty$ smoothness of the Bell
multipliers, and \eqref{eq:integ-1}, we obtain
\begin{align*}
&\|K_{\varphi_t}^{(j)}(\cdot,w)
-K_{\varphi_{t_0}}^{(j)}(\cdot,w)\|_{\varphi_{t_0}}\\
&\quad\le
C_j\left(
\|K_{\varphi_t}(\cdot,w)
-K_{\varphi_{t_0}}(\cdot,w)\|_{\varphi_{t_0}}
+|\Delta t|
\right)
\longrightarrow0,
\ \ \  0\le j\le p.
\end{align*}
In particular, applying the same
estimate at each nearby base point and using the uniform equivalence
of the weighted norms shows that the map
\[
s\longmapsto K_{\varphi_s}^{(p)}(\cdot,w)
\]
is continuous from a neighborhood of $t_0$ into $L^2(\Omega,\varphi_{t_0})$. It is therefore
Bochner integrable on the segment joining $t_0$ to $t$. The induction
hypothesis gives
$\partial_sK_{\varphi_s}^{(p-1)}(\zeta,w)
=K_{\varphi_s}^{(p)}(\zeta,w)$ pointwise in $\zeta$.
The scalar fundamental theorem of calculus together with the
Bochner--Fubini theorem therefore gives the following identity in
$L^2(\Omega,\varphi_{t_0})$, and justifies Minkowski's integral inequality in this fixed
weighted space.
For $\Delta t=t-t_0$, the fundamental theorem of calculus gives
\[
\mathcal E(\zeta,w,t;p)
=
\int_0^1
\left(
K_{\varphi_{t_0+\theta \Delta t}}^{(p)}(\zeta,w)
-K_{\varphi_{t_0}}^{(p)}(\zeta,w)
\right)\,d\theta.
\]
Hence Minkowski's inequality implies
\[
\|\mathcal E(\cdot,w,t;p)\|_{\varphi_{t_0}}
\le
\sup_{|t_1-t_0|\le|\Delta t|}
\|K_{\varphi_{t_1}}^{(p)}(\cdot,w)
-K_{\varphi_{t_0}}^{(p)}(\cdot,w)\|_{\varphi_{t_0}}
\longrightarrow0.
\]
The same argument gives
\[
\|\Delta K_{\varphi_t}^{(j)}(\cdot,w)\|_{\varphi_{t_0}}
=O(|\Delta t|),
\ \ \  0\le j\le p-1.
\]
In each summand of \eqref{fo:difference} one has
$m_1+m_2+m_3=p-1$, and hence $m_1,m_2\leq p-1$.
Therefore every kernel increment occurring in the remainder
$\mathcal R$ is covered by the preceding
$O(|\Delta t|)$ estimate. Moreover,
$\|\Delta g_{t,k}\|_{L^\infty(\Omega)}=O(|\Delta t|)$.
By construction, each term in $\mathcal R$ contains at least two
of these increments; all its remaining kernel factors are uniformly
bounded in $L^2(\Omega,\varphi_{t_0})$, and all remaining multiplier
factors are uniformly bounded in $L^\infty(\Omega)$. Consequently,
Cauchy--Schwarz gives
\[
\left|\int_\Omega \mathcal R(z,w,\zeta,t)\,dV(\zeta)\right|
=O(|\Delta t|^2).
\]
The conjugate estimate is identical.
Hence \eqref{fo:estimate-1} holds for $m=p+1$.
Dividing \eqref{fo:difference} by $\Delta t$ and letting
$t\to t_0$ then proves \eqref{fo:Leibniz-1} for $m=p+1$.

It remains to prove \eqref{fo:derivation-1} at order $p+1$.
Choose $\Omega_k\Subset\Omega$ increasing to $\Omega$, and
let $\chi_k$ be its characteristic function. Define
\[
I(z,w;k)
:=
-\int_\Omega
\chi_k(\cdot)
K_{\varphi_t}(\cdot,w)
\overline{K_{\varphi_t}(\cdot,z)}
(-\varphi_t)'
e^{-\varphi_t}.
\]
We use the conventions
\[
\Lambda_0=\{\varnothing\},\ \ \ 
\varnothing!=1,\ \ \ 
P_{\varphi_t,\varnothing}=\mathrm{Id}.
\]
Thus the induction hypothesis also covers the case of zero
derivatives.

By Leibniz's formula and the induction hypothesis,
\begin{eqnarray*}
\partial_t^p I(z,w;k)
&=&
-\sum_{\substack{m_1+m_2+m_3=p\\m_1,m_2,m_3\ge0}}
\frac{p!}{m_1!m_2!m_3!}
\int_\Omega
\chi_k
K_{\varphi_t}^{(m_1)}(\cdot,w)
\overline{K_{\varphi_t}^{(m_2)}(\cdot,z)}
B_{m_3+1}(-\varphi_t)e^{-\varphi_t}\\
&=&
-\sum
\frac{p!\,m_3}{\alpha!\beta!m_3!}
\int_\Omega
\chi_k
P_{\varphi_t,\alpha}
\bigl(K_{\varphi_t}(\cdot,w)\bigr)
\overline{
P_{\varphi_t,\beta}
\bigl(K_{\varphi_t}(\cdot,z)\bigr)}
B_{m_3}(-\varphi_t)e^{-\varphi_t}\\
&=&
\sum
\frac{p!\,m_3}{\alpha!\beta!m_3!}
P_{\varphi_t,\beta^-}
\circ P_{\varphi_t,m_3}
\left(
\chi_k
P_{\varphi_t,\alpha}
\bigl(K_{\varphi_t}(\cdot,w)\bigr)
\right)(z),
\end{eqnarray*}
where in the last two sums
\[
m_1,m_2\ge0,\ \ \  m_3\ge1,\ \ \ 
m_1+m_2+m_3=p+1,
\]
and
\[
\alpha=(\alpha_1,\ldots,\alpha_{r_1})\in\Lambda_{m_1},
\ \ \ 
\beta=(\beta_1,\ldots,\beta_{r_2})\in\Lambda_{m_2}.
\]
Here
\[
\beta^-:=(\beta_{r_2},\ldots,\beta_1),
\]
and when $m_1=0$ or $m_2=0$, the corresponding empty
composition is omitted.
The last equality follows from
Lemma~\ref{le:commutation-1}.

Since $\varphi_t$ is smooth up to the boundary,
\eqref{inq:smooth-estimate} yields
\begin{eqnarray*}
&&
\biggl\|
P_{\varphi_t,\beta^-}\circ P_{\varphi_t,m_3}
\left(
\chi_k
P_{\varphi_t,\alpha}
(K_{\varphi_t}(\cdot,w))
\right)\\
&&\hspace{25mm}
-
P_{\varphi_t,\beta^-}\circ P_{\varphi_t,m_3}
\circ P_{\varphi_t,\alpha}
(K_{\varphi_t}(\cdot,w))
\biggr\|_{\varphi_t}\\
&\le&
C\left\|
(\chi_k-1)
P_{\varphi_t,\alpha}
(K_{\varphi_t}(\cdot,w))
\right\|_{\varphi_t}
\longrightarrow0
\end{eqnarray*}
as $k\to\infty$.
On the left, \eqref{fo:Leibniz-1}, the preceding norm
estimates, and boundedness of the Bell multipliers give
\[
\partial_t^p I(z,w;k)
\longrightarrow K_{\varphi_t}^{(p+1)}(z,w).
\]
Indeed, each omitted integral is bounded by a constant times
\[
\|(1-\chi_k)K_{\varphi_t}^{(m_1)}(\cdot,w)\|_{\varphi_t}
\,
\|K_{\varphi_t}^{(m_2)}(\cdot,z)\|_{\varphi_t}.
\]
Thus
\begin{equation}\label{eq:F-alpha}
K_{\varphi_t}^{(p+1)}(z,w)
=
\sum
\frac{p!\,m_3}{\alpha!\beta!m_3!}
P_{\varphi_t,\beta^-}
\circ P_{\varphi_t,m_3}
\circ P_{\varphi_t,\alpha}
\bigl(K_{\varphi_t}(\cdot,w)\bigr)(z).
\end{equation}

We now collect the terms in \eqref{eq:F-alpha}
corresponding to the same ordered composition. Fix
\[
\gamma=(\gamma_1,\ldots,\gamma_r)\in\Lambda_{p+1,r}.
\]
For $j=1,\ldots,r$, take
\[
m_3=\gamma_j,\ \ \ 
\alpha=(\gamma_1,\ldots,\gamma_{j-1}),\ \ \ 
\beta=(\gamma_r,\ldots,\gamma_{j+1}),
\]
with the convention that an empty ordered composition is omitted.
Then
\[
P_{\varphi_t,\beta^-}
\circ P_{\varphi_t,m_3}
\circ P_{\varphi_t,\alpha}
=
P_{\varphi_t,\gamma},
\]
and the corresponding coefficient is
\[
\frac{p!\,\gamma_j}{\gamma!}.
\]
Conversely, every indexed term in \eqref{eq:F-alpha}
is obtained by choosing such a distinguished position $j$.
Therefore the total coefficient of
$P_{\varphi_t,\gamma}(K_{\varphi_t}(\cdot,w))$ is
\[
\sum_{j=1}^r\frac{p!\,\gamma_j}{\gamma!}
=
\frac{p!|\gamma|}{\gamma!}
=
\frac{(p+1)!}{\gamma!},
\]
since $|\gamma|=p+1$.
Consequently,
\[
K_{\varphi_t}^{(p+1)}(z,w)
=
\sum_{\gamma\in\Lambda_{p+1}}
\frac{(p+1)!}{\gamma!}
P_{\varphi_t,\gamma}
\bigl(K_{\varphi_t}(\cdot,w)\bigr)(z).
\]
The right-hand side is continuous in the norm
$\|\cdot\|_{\varphi_{t_0}}$ by the same argument used above.
Thus $K_{\varphi_t}^{(p+1)}(\cdot,w)$ is also continuous in this norm.
This proves \eqref{fo:derivation-1} for $m=p+1$
and completes the induction.

\section{Real analyticity}\label{sec:analyticity}

Throughout this section, $\Omega$ is bounded and pseudoconvex with
$C^2$ boundary. We use Theorem~\ref{th:Projection} to prove the
analyticity assertion of Theorem~\ref{th:Gevrey-1}.
We first prove the following weighted-norm statement.
It gives both the local uniform analyticity estimates and
the norm control used in Section~\ref{sec:logarithmic}.
Here $\tau\in\mathbb C$ denotes the complexification of the real
parameter $t$. A holomorphic extension is denoted by $K_\tau$ and
agrees with the original family on the real axis:
\[
\left.K_\tau(\cdot,w)\right|_{\tau=t}=K_t(\cdot,w).
\]
The reference parameter $t_0$ and the exponents defining the weighted
Hilbert spaces remain real; $\tau_0$ denotes a complex expansion point.

\begin{proposition}\label{prop:norm-analyticity}
For every $t_0>-1$ there exist $\eta_1,R>0$, with
$t_0-\eta_1>-1$, such that, for every $w\in\Omega$,
the map $t\mapsto K_t(\cdot,w)$, initially defined for real $t>-1$,
has a holomorphic extension $\tau\mapsto K_\tau(\cdot,w)$ to
$\{\tau\in\mathbb C:|\tau-t_0|<R\}$ with values in
$A^2_{t_0-\eta_1}(\Omega)$.
For every $S\Subset\Omega$,
\begin{equation}\label{eq:norm-analyticity}
\sup_{\substack{|t-t_0|\le R/2\\t\in\mathbb R,\ w\in S}}
 \|\partial_t^mK_t(\cdot,w)\|_{t_0-\eta_1}
 \le C_S(2/R)^m m!,\ \ \  m\ge0.
\end{equation}
\end{proposition}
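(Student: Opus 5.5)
We construct the holomorphic extension through a holomorphic family of bounded operators on one fixed Hilbert space, and then identify the kernel section through Lemma~\ref{le:ra-projection-identity}. Fix $t_0>-1$ and let $\eta_0,C_0$ be as in Theorem~\ref{th:Projection}. Choose $0<\eta_1<\eta_0/2$ and work in $H:=L^2_{t_0-\eta_1}(\Omega)$. For complex $\tau$ near $t_0$ define the multiplication operator $M_\tau f:=\delta^{\tau-t_0}f$ and the conjugated operator
\[
A_\tau:=M_{-(\tau-t_0)}\,P_{t_0}\,M_{\tau-t_0}.
\]
For real $t$, $A_t$ is an idempotent whose range is $A^2$ (multiplication by $\delta^{\pm(t-t_0)}$ does not preserve holomorphy, so we must be careful). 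A more robust choice is $A_t:=P_{t_0}$ acting on $H$. By Theorem~\ref{th:Projection} it is bounded on $H$, idempotent, and has range $A^2_{t_0-\eta_1}(\Omega)$; it is oblique with respect to $\langle\cdot,\cdot\rangle_{t_0-\eta_1}$. The $H$-adjoint of $P_{t_0}$ is $\delta^{-(t_0-\eta_1)}P^{\rm formal}\delta^{t_0-\eta_1}$, and here the parameter enters: conjugating by $\delta^{s}$ with complex $s$ gives the operators $B_\tau:=\delta^{t_0-\tau}P_{t_0}\delta^{\tau-t_0}$ on $H$.

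The plan is to write $K_t(\cdot,w)=P_t(g_w)$, where $g_w:=\chi_0K_{\Omega_{\varepsilon_0},t}(\cdot,w)$ as in Theorem~\ref{th:Openness}. To avoid $t$ in $g_w$, we instead use a fixed test function. Take a radial bump $\phi_w\in C_c^\infty(B(w,r))$ with $\int\phi_w=1$. Then $K_t(z,w)=\langle \delta^{-t}\phi_w, K_t(\cdot,z)\rangle_t$ by the mean value property, that is, $K_t(\cdot,w)=P_t(\delta^{-t}\phi_w)$. Next express $P_t$ through a fixed operator. Let $Q_t$ denote the orthogonal projection of $H$ onto $A^2_{t_0-\eta_1}$ for the inner product weighted by $\delta^{t}$, which is equivalent to the $H$ norm only on $A^2$. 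The cleaner route is: $P_t$ restricted to $H$ is bounded by Theorem~\ref{th:Projection}, uniformly for $|t-t_0|\le\eta_0$, and is an idempotent with range $A^2_{t_0-\eta_1}$. Writing $P_t=P_{t_0}(P_{t_0}+P_{t_0}^{\dagger_t}-\mathrm{Id})^{-1}$, where $\dagger_t$ is the adjoint in $\langle\cdot,\cdot\rangle_t$, gives $P_{t_0}^{\dagger_t}=\delta^{t_0-t}P_{t_0}\delta^{t-t_0}$. This expression is $B_t$, and it extends to complex $\tau$. The key estimate is that $B_\tau$ is bounded on $H$ for $|\tau-t_0|<R$. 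Since $|\delta^{\tau-t_0}|=\delta^{\operatorname{Re}\tau-t_0}$, this reduces to Theorem~\ref{th:Projection} applied to $P_{t_0}$ on $L^2_{t_0-\eta}$ for $\eta$ ranging over $[\eta_1-R,\eta_1+R]\subset[0,\eta_0]$. Weak holomorphy on the dense subspace $C_c^\infty$ is clear, since there the integrands are entire in $\tau$. Lemma~\ref{le:ra-weak-holomorphy} then gives norm holomorphy of $\tau\mapsto B_\tau$.

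Set $X_\tau:=P_{t_0}+B_\tau-\mathrm{Id}$. For real $t$, Lemma~\ref{le:ra-projection-identity} applied in the Hilbert space $(L^2_t,\langle\cdot,\cdot\rangle_t)$ gives $X_t$ invertible there. I expect the \emph{main obstacle} to be invertibility of $X_t$ on $H$ rather than on $L^2_t$, with a bound uniform in $t$. I would first try the following. The inverse on $L^2_t$ restricts to $H\subset L^2_t$, where the inclusion holds for $t\ge t_0-\eta_1$. Boundedness on $H$ should follow from the identity $X_t^{-1}=\mathrm{Id}+\ldots$: one checks directly that $X_t^{-1}=(P_t-P_{t_0}^{\dagger_t}P_t\ldots)$. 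Concretely, $(P_{t_0}+P_{t_0}^{\dagger}-\mathrm{Id})^{-1}=P_tP_{t_0}^{\dagger}+(\mathrm{Id}-P_t)(\mathrm{Id}-P_{t_0})\ldots$, which is a polynomial in operators already known to be bounded on $H$ by Theorem~\ref{th:Projection}. Granting a uniform bound at $\tau=t_0$, where $X_{t_0}=P_{t_0}+P_{t_0}^{*}-\mathrm{Id}$ in $L^2_{t_0}$, Lemma~\ref{le:ra-inversion} combined with \eqref{ra:operator-coefficients} yields invertibility of $X_\tau$ on $H$ and holomorphy of $X_\tau^{-1}$ for $|\tau-t_0|<R$, with $R$ small.

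Define $K_\tau(\cdot,w):=P_{t_0}X_\tau^{-1}(\delta^{-\tau}\phi_w)$. The map $\tau\mapsto\delta^{-\tau}\phi_w\in H$ is holomorphic because $\phi_w$ has compact support, and its norm is uniform for $w\in S$. The extension takes values in $A^2_{t_0-\eta_1}$ and agrees with $K_t(\cdot,w)$ for real $t$, by the projection identity and $P_t=P_{t_0}X_t^{-1}$. Finally, $\sup_{|\tau-t_0|=R}\|K_\tau(\cdot,w)\|_H\le C_S$, and the Cauchy integral formula in $H$ on the circle of radius $R$ about real $t$ with $|t-t_0|\le R/2$ gives \eqref{eq:norm-analyticity}, after shrinking $R$ by a factor of two.
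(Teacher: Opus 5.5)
Your final construction is the paper's argument without the half-power conjugation. Undoing the conjugation by $\delta^{(t-t_0)/2}$ in \eqref{ra:projection-identity} gives exactly your identity $P_t=P_{t_0}(P_{t_0}+B_t-\mathrm{Id})^{-1}$ on $L^2_t(\Omega)$. The paper likewise continues $B_t=\delta^{t_0-t}P_{t_0}\delta^{t-t_0}$ holomorphically (in conjugated form), inverts by a Neumann series, feeds in a compactly supported mean-value test function, and finishes with Cauchy estimates.

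\emph{The gap.} It sits at the step you call the main obstacle, which you leave as ``granting'', and the route you sketch for it does not work. Relative to $A^2_t(\Omega)\oplus A^2_t(\Omega)^{\perp}$, let $T$ be the off-diagonal block of $P_{t_0}$. Then $(P_{t_0}+B_t-\mathrm{Id})^{-1}$ has diagonal blocks $(\mathrm{Id}+TT^*)^{-1}$ and $-(\mathrm{Id}+T^*T)^{-1}$, so it is not in general a polynomial in $P_t,P_{t_0},B_t$. Also, invertibility on $L^2_t$ says nothing a priori about $H$.

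\emph{The fix.} No uniformity in $t$ is needed. At $\tau=t_0$ you have $B_{t_0}=P_{t_0}$, and $P_{t_0}$ is idempotent on $H=L^2_{t_0-\eta_1}(\Omega)$ by Lemma~\ref{le:ra-extension} with $\sigma=-\eta_1$. Hence $X_{t_0}=2P_{t_0}-\mathrm{Id}$ satisfies $X_{t_0}^2=\mathrm{Id}$ on $H$ and is its own inverse, of norm at most $2C_0+1$. Lemma~\ref{le:ra-inversion} with \eqref{ra:operator-coefficients} then inverts $X_\tau$ on $H$ for all small $|\tau-t_0|$ at once, as in \eqref{ra:inverse-smallness}.

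\emph{Identification at real $t$.} This must run in the opposite direction from the one you indicate. Since $\delta$ is bounded, $H\hookrightarrow L^2_t(\Omega)$ continuously for $t\ge t_0-\eta_1$. Moreover, $P_{t_0}$ and $B_t$ on $H$ are restrictions of the corresponding operators on $L^2_t$, by the consistency in Lemma~\ref{le:ra-extension}. So $v=X_t^{-1}u$ computed in $H$ solves $X_tv=u$ in $L^2_t$, and injectivity of $X_t$ there identifies it with the $L^2_t$-solution. Only this yields $K_t(\cdot,w)=P_{t_0}X_t^{-1}(\delta^{-t}\phi_w)$.

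\emph{Two smaller corrections.} First, for $t>t_0$, applying Lemma~\ref{le:ra-projection-identity} in $L^2_t$ needs the duality extension of $P_{t_0}$ to $L^2_t$ with range $A^2_t$ from Lemma~\ref{le:ra-extension}, not only Theorem~\ref{th:Projection}. Second, because you conjugate by the full power $\delta^{\tau-t_0}$, the projection inside $B_\tau$ acts on $L^2_{t_0-\eta_1-2(\operatorname{Re}\tau-t_0)}$. You therefore need $2|\operatorname{Re}\tau-t_0|\le\eta_1\le\eta_0/2$, not the range $[\eta_1-R,\eta_1+R]$ you wrote.

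\emph{Comparison.} With these repairs your variant is correct and somewhat leaner than the paper's. Because the outer factor is $P_{t_0}$ acting on $H$, membership of $K_\tau(\cdot,w)$ in $A^2_{t_0-\eta_1}(\Omega)$ is immediate. You also avoid the logarithmic expansion of the outer multiplier $\delta^{(t_0-\tau)/2}$ and the identity-theorem step. The price is that your Kerzman--Stein identity lives in the varying spaces $L^2_t$. The paper instead keeps it in the single Hilbert space $L^2_{t_0}$, where \eqref{ra:adjoint} is a literal adjoint relation.
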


By a dilation, we may assume $0<\delta<1$ on $\Omega$.
Indeed, for $c>0$,
\[
\delta_{c\Omega}(cz)=c\delta_\Omega(z),\ \ \ 
K_{c\Omega,t}(cz,cw)=c^{-2n-t}K_{\Omega,t}(z,w).
\]
The corresponding change of variables identifies the fixed
weighted spaces, and the factor $c^{-2n-\tau}$ is entire in $\tau$.
Fix $t_0>-1$ and choose $\eta_0,C_0$ as in
Theorem~\ref{th:Projection}, with $t_0-2\eta_0>-1$ and $C_0\ge1$.

\subsection{Extension to nearby weighted spaces}

\begin{lemma}\label{le:ra-extension}
The operator $P_{t_0}$ has consistent bounded extensions to
$L^2_{t_0+\sigma}(\Omega)$, $|\sigma|\le\eta_0$, satisfying
\begin{equation}\label{ra:two-sided}
\|P_{t_0}f\|_{t_0+\sigma}\le C_0\|f\|_{t_0+\sigma}.
\end{equation}
On each such space,
\[
P_{t_0}^2=P_{t_0},\ \ \ 
\operatorname{Ran}P_{t_0}=A^2_{t_0+\sigma}(\Omega).
\]
\end{lemma}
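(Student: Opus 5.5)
The plan is to handle the two signs of $\sigma$ separately, using Theorem~\ref{th:Projection} for $\sigma\le0$ and duality for $\sigma\ge0$. Since $0<\delta<1$, we have continuous inclusions $L^2_{t_0-\eta_0}\hookrightarrow L^2_{t_0+\sigma}\hookrightarrow L^2_{t_0+\eta_0}$ for $|\sigma|\le\eta_0$. The space $C_c^\infty(\Omega)$, or equally $L^2_{t_0-\eta_0}(\Omega)$, is dense in each of them. Consistency of the extensions will follow once every extension agrees with $P_{t_0}$ on this common dense subspace.

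For $\sigma=-\eta\le0$, Theorem~\ref{th:Projection} with $t=t_0$ gives $\|P_{t_0}f\|_{t_0-\eta}\le C_0\|f\|_{t_0-\eta}$ directly. Here $P_{t_0}$ is already defined on $L^2_{t_0-\eta}\subset L^2_{t_0}$, so there is nothing to extend.

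For $\sigma=\eta>0$, I would use duality with respect to the pairing $\langle\cdot,\cdot\rangle_{t_0}$. The map $g\mapsto\delta^{-\eta}g$ is unitary from $L^2_{t_0+\eta}$ onto $L^2_{t_0-\eta}$, and therefore
\[
\|f\|_{t_0+\eta}=\sup\{|\langle f,g\rangle_{t_0}|:\|g\|_{t_0-\eta}\le1\},
\]
since $\langle f,g\rangle_{t_0}=\int f\bar g\,\delta^{t_0}$. For $f\in L^2_{t_0}\cap L^2_{t_0+\eta}$, in particular $f\in C_c^\infty$, self-adjointness of $P_{t_0}$ on $L^2_{t_0}$ gives
\[
|\langle P_{t_0}f,g\rangle_{t_0}|=|\langle f,P_{t_0}g\rangle_{t_0}|\le\|f\|_{t_0+\eta}\|P_{t_0}g\|_{t_0-\eta}\le C_0\|f\|_{t_0+\eta}\|g\|_{t_0-\eta}.
\]
Hence $\|P_{t_0}f\|_{t_0+\eta}\le C_0\|f\|_{t_0+\eta}$, and $P_{t_0}$ extends by density. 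The extension admits the explicit description $(P_{t_0}f)(z)=\langle f,K_{t_0}(\cdot,z)\rangle_{t_0}$. This integral converges because $K_{t_0}(\cdot,z)\in L^2_{t_0-\eta}$ for $\eta\le\eta_0$ by Theorem~\ref{th:Openness}, after shrinking $\eta_0$ if necessary. The description also shows that the value is independent of the space in which $f$ is regarded, which again gives consistency.

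It remains to show idempotence and identify the range. The identity $P_{t_0}^2=P_{t_0}$ holds on the dense subspace and both sides are bounded, so it extends. For the range, holomorphy of $P_{t_0}f$ for $f\in L^2_{t_0+\sigma}$ comes from locally uniform convergence: on compact sets, $L^2_{t_0+\sigma}$-convergence of holomorphic functions implies uniform convergence, by the submean inequality together with the lower bound for $\delta$ on compacts. Thus $\operatorname{Ran}P_{t_0}\subset A^2_{t_0+\sigma}$.

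The reverse inclusion is the main obstacle: I need $P_{t_0}h=h$ for every $h\in A^2_{t_0+\sigma}$.
\begin{itemize}
\item When $\sigma\le0$, this is immediate, since $h\in A^2_{t_0}$.
\item When $\sigma>0$, I would approximate $h$ by $h_j\in A^2_{t_0}$ in $L^2_{t_0+\sigma}$. The plan is to take $\chi_j h$ with $\chi_j$ an exhaustion cutoff, so that $\chi_jh\to h$ in $L^2_{t_0+\sigma}$. By continuity, $P_{t_0}(\chi_jh)\to P_{t_0}h$, and it suffices to show $P_{t_0}(\chi_jh)-\chi_jh\to0$ in $L^2_{t_0+\sigma}$.
\end{itemize}
An alternative for the case $\sigma>0$ is to verify the reproducing identity directly, i.e. $\langle h,K_{t_0}(\cdot,z)\rangle_{t_0}=h(z)$ for $h\in A^2_{t_0+\sigma}$. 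I would try to deduce this from the pairing: the annihilator of $(I-P_{t_0})$-images must be trivial. Concretely, for $g\in L^2_{t_0-\sigma}$ we have $\langle h-P_{t_0}h,g\rangle_{t_0}=\langle h,g-P_{t_0}g\rangle_{t_0}$. So it suffices to show that $h\in A^2_{t_0+\sigma}$ is $\langle\cdot,\cdot\rangle_{t_0}$-orthogonal to $(I-P_{t_0})L^2_{t_0-\sigma}$, which consists of $\bar\partial$-minimal solutions orthogonal to $A^2_{t_0}$. I expect this approximation or orthogonality step to be the hardest point. If direct cutoffs fail, I would fall back on Chen's density results for weighted Bergman spaces on $C^2$ domains to approximate $h$.
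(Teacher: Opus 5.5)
Your treatment of $\sigma\le0$, the duality bound for $\sigma>0$, consistency by density of $C_c^\infty(\Omega)$, closedness of the holomorphic subspaces, and the inclusion $\operatorname{Ran}P_{t_0}\subset A^2_{t_0+\sigma}(\Omega)$ all match the paper's proof. Your density argument for $P_{t_0}^2=P_{t_0}$ is also fine.

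The gap is exactly the step you flag: showing $P_{t_0}h=h$ for $h\in A^2_{t_0+\sigma}(\Omega)$ when $\sigma>0$.
\begin{itemize}
\item Your cutoff route is circular. By the bound you already have, $P_{t_0}(\chi_jh)-\chi_jh\to P_{t_0}h-h$ in $L^2_{t_0+\sigma}(\Omega)$, so requiring this difference to tend to $0$ is the claim itself. Since $\chi_jh$ is not holomorphic, nothing forces $P_{t_0}$ to fix it.
\item The orthogonality reformulation is also only a restatement. The range of the extended operator is the $L^2_{t_0+\sigma}$-closure of $A^2_{t_0}(\Omega)$. So what must be proved is precisely that $A^2_{t_0}(\Omega)$ is dense in $A^2_{t_0+\sigma}(\Omega)$.
\item An unspecified appeal to density results in the literature does not supply this.
\end{itemize}

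The missing idea is to build holomorphic approximants with Theorem~\ref{th:Projection} applied at the \emph{shifted} base point $t=t_0+\sigma$ with $\eta=\sigma$; this is admissible for $0<\sigma\le\eta_0$.
\begin{itemize}
\item Take $f_j\in C_c^\infty(\Omega)$ with $f_j\to h$ in $L^2_{t_0+\sigma}(\Omega)$; your $\chi_jh$ would also do.
\item Since $f_j\in L^2_{t_0}(\Omega)$, that theorem gives $P_{t_0+\sigma}f_j\in A^2_{t_0}(\Omega)$, hence $P_{t_0}P_{t_0+\sigma}f_j=P_{t_0+\sigma}f_j$.
\item Contractivity of $P_{t_0+\sigma}$ on $L^2_{t_0+\sigma}(\Omega)$, together with $P_{t_0+\sigma}h=h$, gives $\|P_{t_0+\sigma}f_j-h\|_{t_0+\sigma}\le\|f_j-h\|_{t_0+\sigma}\to0$.
\item Therefore $\|P_{t_0}h-h\|_{t_0+\sigma}\le\|P_{t_0}(h-P_{t_0+\sigma}f_j)\|_{t_0+\sigma}+\|P_{t_0+\sigma}f_j-h\|_{t_0+\sigma}\le(C_0+1)\|P_{t_0+\sigma}f_j-h\|_{t_0+\sigma}\to0$, which gives the range identity.
\end{itemize}

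Two minor points. First, $g\mapsto\delta^{-\eta}g$ is unitary from $L^2_{t_0-\eta}(\Omega)$ onto $L^2_{t_0+\eta}(\Omega)$, not the reverse; your dual-norm formula is nevertheless correct. Second, no shrinking of $\eta_0$ is needed for your optional kernel representation. The proof of Theorem~\ref{th:Openness} already works with the $\eta_0$ of Theorem~\ref{th:Projection}, and shrinking it would weaken the lemma as stated.
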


\begin{proof}
For real exponents $t_1,t_2$, the inclusions and the
duality pairing are
\begin{align*}
t_1\le t_2&\ \Longrightarrow\
 L^2_{t_1}(\Omega)\subset L^2_{t_2}(\Omega),\ \ \  \|f\|_{t_2}\le\|f\|_{t_1},
\\
|\langle f,g\rangle_{t_0}|
 &\le\|f\|_{t_0+\sigma}\|g\|_{t_0-\sigma}.
\end{align*}
Moreover,
\[
\|f\|_{t_0+\sigma}
 =\sup_{\substack{g\in C_c^\infty(\Omega)\\
                         \|g\|_{t_0-\sigma}\le1}}
       |\langle f,g\rangle_{t_0}|.
\]
Indeed, the maps
$f\mapsto\delta^{(t_0+\sigma)/2}f$ and
$g\mapsto\delta^{(t_0-\sigma)/2}g$ reduce
the preceding duality identities to $L^2(\Omega,dV)$
duality. The space $C_c^\infty(\Omega)$ is dense in each weighted
space: first truncate to compact subsets, and then mollify there,
where $\delta^{t_1}$ is bounded above and below by positive constants.

For $\sigma\le0$, \eqref{ra:two-sided} follows from
Theorem~\ref{th:Projection} with $t=t_0$ and $\eta=-\sigma$.
For $0<\sigma\le\eta_0$ and $f,g\in C_c^\infty(\Omega)$,
\begin{align*}
|\langle P_{t_0}f,g\rangle_{t_0}|
 =|\langle f,P_{t_0}g\rangle_{t_0}|
 \le\|f\|_{t_0+\sigma}\|P_{t_0}g\|_{t_0-\sigma}
 \le C_0\|f\|_{t_0+\sigma}\|g\|_{t_0-\sigma}.
\end{align*}
Taking the supremum over $g$ in the dual norm formula proves
\eqref{ra:two-sided} on $C_c^\infty(\Omega)$, hence on its completion.
If $f\in L^2_{t_1}(\Omega)\subset L^2_{t_2}(\Omega)$ and $f_j\in C_c^\infty(\Omega)$ converges to
$f$ in $L^2_{t_1}(\Omega)$, then
\[
\|f_j-f\|_{t_2}\le\|f_j-f\|_{t_1}\longrightarrow0.
\]
The two extensions are therefore the same limit in $L^2_{t_2}(\Omega)$.

For completeness, holomorphic subspaces are closed in these norms.
If $h_j\in\mathcal O(\Omega)$ and $h_j\to h$ in $L^2_{t_1}(\Omega)$, then,
for $E\Subset E'\Subset\Omega$, the mean-value inequality gives
\[
\sup_E|h_j-h_k|
 \le C_E\|h_j-h_k\|_{L^2(E')}
 \le C_{E,t_1}\|h_j-h_k\|_{t_1}\longrightarrow0.
\]
Hence $h$ has a holomorphic representative.
Approximating $f$ by compactly supported smooth functions now shows
\[
\operatorname{Ran}P_{t_0}\subset A^2_{t_0+\sigma}(\Omega).
\]

If $\sigma\le0$ and $h\in A^2_{t_0+\sigma}(\Omega)$, then $h\in A^2_{t_0}(\Omega)$ and
$P_{t_0}h=h$. If $0<\sigma\le\eta_0$, choose
$f_j\in C_c^\infty(\Omega)$ with $\|f_j-h\|_{t_0+\sigma}\to0$.
Theorem~\ref{th:Projection}, with $t=t_0+\sigma$ and $\eta=\sigma$,
implies
\[
P_{t_0+\sigma}f_j\in A^2_{t_0}(\Omega),\ \ \ 
\|P_{t_0+\sigma}f_j\|_{t_0}\le C_0\|f_j\|_{t_0}.
\]
Contractivity on $L^2_{t_0+\sigma}(\Omega)$ also gives
\[
\|P_{t_0+\sigma}f_j-h\|_{t_0+\sigma}
 \le\|f_j-h\|_{t_0+\sigma}\longrightarrow0.
\]
Consequently,
\begin{align*}
\|P_{t_0}h-h\|_{t_0+\sigma}
&\le
 \|P_{t_0}(h-P_{t_0+\sigma}f_j)\|_{t_0+\sigma}
 +\|P_{t_0+\sigma}f_j-h\|_{t_0+\sigma}\\
&\le(C_0+1)\|P_{t_0+\sigma}f_j-h\|_{t_0+\sigma}
 \longrightarrow0.
\end{align*}
Thus $P_{t_0}h=h$ on $A^2_{t_0+\sigma}(\Omega)$, which proves the range identity and idempotence.
\end{proof}

\begin{lemma}\label{le:ra-conjugation}
For every real $\sigma$ with $|\sigma|<\eta_0$, the two families
\[
\delta^{\pm(\tau-t_0)/2}P_{t_0}\delta^{\mp(\tau-t_0)/2}
\]
are bounded and holomorphic on $L^2_{t_0+\sigma}(\Omega)$ in the strip
$|\operatorname{Re}\tau-t_0|<\eta_0-|\sigma|$, and
\begin{equation}\label{ra:bound}
\left\|\delta^{\pm(\tau-t_0)/2}P_{t_0}
                    \delta^{\mp(\tau-t_0)/2}\right\|_{\mathcal L(L^2_{t_0+\sigma}(\Omega))}
 \le C_0.
\end{equation}
On $L^2_{t_0}(\Omega)$,
\begin{equation}\label{ra:adjoint}
\left(\delta^{(\tau-t_0)/2}P_{t_0}\delta^{(t_0-\tau)/2}\right)^*
 =\delta^{(t_0-\bar \tau)/2}P_{t_0}\delta^{(\bar \tau-t_0)/2}.
\end{equation}
\end{lemma}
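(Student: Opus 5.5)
The plan is to reduce the conjugated families to the extensions of $P_{t_0}$ from Lemma~\ref{le:ra-extension} by means of the isometries
\[
M_{\zeta}:f\mapsto\delta^{\zeta}f .
\]
Since $0<\delta<1$, write $\tau=t+is$. Then $|\delta^{\pm(\tau-t_0)/2}|=\delta^{\pm(t-t_0)/2}$, and the imaginary part only contributes the unimodular factor $\delta^{\pm is/2}$.

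Take the first family first. For $f\in L^2_{t_0+\sigma}$, the function $g=\delta^{\mp(\tau-t_0)/2}f$ satisfies
\[
\|g\|_{t_0+\sigma\mp(t-t_0)}=\|f\|_{t_0+\sigma}.
\]
In other words, multiplication by $\delta^{\mp(\tau-t_0)/2}$ is an isometric isomorphism
\[
L^2_{t_0+\sigma}\to L^2_{t_0+\sigma'},\qquad \sigma'=\sigma\mp(t-t_0).
\]
Under the hypothesis $|t-t_0|<\eta_0-|\sigma|$ we have $|\sigma'|<\eta_0$. Lemma~\ref{le:ra-extension} then gives
\[
\|P_{t_0}g\|_{t_0+\sigma'}\le C_0\|g\|_{t_0+\sigma'}.
\]
Multiplying back by $\delta^{\pm(\tau-t_0)/2}$ is an isometry $L^2_{t_0+\sigma'}\to L^2_{t_0+\sigma}$, which yields \eqref{ra:bound}. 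The consistency of the extensions in Lemma~\ref{le:ra-extension} ensures the operator is well defined, independently of which space is used.

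For holomorphy I would use Lemma~\ref{le:ra-weak-holomorphy} on the Hilbert space $H=L^2_{t_0+\sigma}$. Local boundedness is exactly \eqref{ra:bound}. For the dense subspace take $C_c^\infty(\Omega)$, and compute for $u,v\in C_c^\infty$:
\[
\bigl\langle \delta^{\pm(\tau-t_0)/2}P_{t_0}\delta^{\mp(\tau-t_0)/2}u,\,v\bigr\rangle_{t_0+\sigma}
=\bigl\langle P_{t_0}(\delta^{\mp(\tau-t_0)/2}u),\,\delta^{\sigma+\pm(\bar\tau-t_0)/2}v\bigr\rangle_{t_0}.
\]
I expect this step to be the main obstacle, since the $\tau$-dependence must be shown holomorphic rather than merely continuous. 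The point is that $\tau\mapsto\delta^{\mp(\tau-t_0)/2}u$ is holomorphic in $L^2_{t_0}$, because $u$ has compact support on which $\log\delta$ is bounded: the difference quotients converge uniformly there. Likewise $\delta^{(\bar\tau-t_0)/2}v$ is antiholomorphic in $\tau$, and it enters the inner product through conjugation. Because $P_{t_0}$ is bounded on $L^2_{t_0}$, the pairing is holomorphic in $\tau$. Lemma~\ref{le:ra-weak-holomorphy} then upgrades this to operator-norm holomorphy on the strip. The second family is handled identically by swapping signs.

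For the adjoint identity \eqref{ra:adjoint}, fix $\tau$ in the strip with $\sigma=0$, so that $|\operatorname{Re}\tau-t_0|<\eta_0$. Both sides are bounded on $L^2_{t_0}$, so it suffices to check the identity on $u,v\in C_c^\infty$. There multiplication by powers of $\delta$ is harmless, and the self-adjointness of $P_{t_0}$ gives
\[
\langle \delta^{(\tau-t_0)/2}P_{t_0}\delta^{(t_0-\tau)/2}u,v\rangle_{t_0}
=\langle u,\delta^{(t_0-\bar\tau)/2}P_{t_0}\delta^{(\bar\tau-t_0)/2}v\rangle_{t_0}.
\]
Here the real-valuedness of $\delta$ is used: the adjoint of multiplication by $\delta^{a}$ is multiplication by $\delta^{\bar a}$. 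Density then extends the identity to all of $L^2_{t_0}$.
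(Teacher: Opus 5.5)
Your proposal is correct and matches the paper's proof step for step. Both arguments obtain \eqref{ra:bound} from the isometric exponent shift combined with Lemma~\ref{le:ra-extension}. Both prove holomorphy via Lemma~\ref{le:ra-weak-holomorphy} on the dense subspace $C_c^\infty(\Omega)$; the paper writes the matrix element as an explicit double integral against $K_{t_0}$, while you use holomorphy of $\tau\mapsto\delta^{\mp(\tau-t_0)/2}u$ in $L^2_{t_0}(\Omega)$ and boundedness of $P_{t_0}$ there, which is an equivalent route. Both prove \eqref{ra:adjoint} from self-adjointness on $C_c^\infty(\Omega)$ plus density.

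There is one harmless sign slip. Multiplication by $\delta^{\mp(\tau-t_0)/2}$ maps $L^2_{t_0+\sigma}(\Omega)$ isometrically onto $L^2_{t_0+\sigma'}(\Omega)$ with $\sigma'=\sigma\pm(\operatorname{Re}\tau-t_0)$, not $\sigma\mp(\operatorname{Re}\tau-t_0)$. This does not affect the argument, since you only use $|\sigma'|<\eta_0$, which holds for either sign.
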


\begin{proof}
Complex powers mean $\delta^\tau=\exp(\tau\log\delta)$.
For each choice of sign,
\begin{align*}
\|\delta^{\mp(\tau-t_0)/2}u\|_{t_0+\sigma\pm(\operatorname{Re}\tau-t_0)}
 &=\|u\|_{t_0+\sigma},\\
\|\delta^{\pm(\tau-t_0)/2}v\|_{t_0+\sigma}
 &=\|v\|_{t_0+\sigma\pm(\operatorname{Re}\tau-t_0)}.
\end{align*}
The exponent shift satisfies
\[
|\sigma\pm(\operatorname{Re}\tau-t_0)|<\eta_0.
\]
Lemma~\ref{le:ra-extension} therefore gives
\begin{align*}
&\|\delta^{\pm(\tau-t_0)/2}P_{t_0}
                         (\delta^{\mp(\tau-t_0)/2}u)\|_{t_0+\sigma}\\
&\quad=
 \|P_{t_0}(\delta^{\mp(\tau-t_0)/2}u)\|
                         _{t_0+\sigma\pm(\operatorname{Re}\tau-t_0)}\\
&\quad\le C_0
 \|\delta^{\mp(\tau-t_0)/2}u\|
                         _{t_0+\sigma\pm(\operatorname{Re}\tau-t_0)}
 =C_0\|u\|_{t_0+\sigma}.
\end{align*}
This also specifies the spaces on which all factors are composed.

For $f,g\in C_c^\infty(\Omega)$,
\begin{eqnarray}\label{ra:matrix-elements}
\left\langle
 \delta^{\pm(\tau-t_0)/2}P_{t_0}\delta^{\mp(\tau-t_0)/2}f,g
 \right\rangle_{t_0+\sigma}
 &=&
 \iint_{\Omega\times\Omega}K_{t_0}(\zeta,\xi)
 e^{\pm\frac{\tau-t_0}{2}(\log\delta(\zeta)-\log\delta(\xi))}
 f(\xi)\overline{g(\zeta)}\notag\\
&&\times\delta(\xi)^{t_0}\delta(\zeta)^{t_0+\sigma}\,dV(\xi)\,dV(\zeta).
\end{eqnarray}
On $\operatorname{supp}g\times\operatorname{supp}f$,
$K_{t_0}$ and both logarithms are bounded. For each compact
parameter set and each $m\ge0$, the absolute value of the $m$th
parameter derivative of the integrand is bounded by
\[
2^{-m}\,C
 |\log\delta(\zeta)-\log\delta(\xi)|^m|f(\xi)g(\zeta)|.
\]
Thus \eqref{ra:matrix-elements} is entire.
The bound \eqref{ra:bound} and
Lemma~\ref{le:ra-weak-holomorphy} prove operator-norm holomorphy.

For the adjoint, first take $f,g\in C_c^\infty(\Omega)$ and use the
selfadjointness of $P_{t_0}$ on $L^2_{t_0}(\Omega)$:
\begin{align*}
\left\langle
 \delta^{(\tau-t_0)/2}P_{t_0}\delta^{(t_0-\tau)/2}f,g
 \right\rangle_{t_0}=\left\langle
 P_{t_0}(\delta^{(t_0-\tau)/2}f),
             \delta^{(\bar \tau-t_0)/2}g\right\rangle_{t_0}
=\left\langle
 f,\delta^{(t_0-\bar \tau)/2}P_{t_0}
                 (\delta^{(\bar \tau-t_0)/2}g)\right\rangle_{t_0}.
\end{align*}
Both sides extend continuously to all $f,g\in L^2_{t_0}(\Omega)$.
This proves \eqref{ra:adjoint}.
\end{proof}

\subsection{Recovery of the weighted projection}

For real $t$ with $|t-t_0|<\eta_0$, multiplication gives a
surjective isometry
\[
L^2_t(\Omega)\longrightarrow L^2_{t_0}(\Omega),\ \ \ 
f\longmapsto\delta^{(t-t_0)/2}f.
\]
Lemma~\ref{le:ra-extension} shows that
\[
\delta^{(t-t_0)/2}P_{t_0}\delta^{(t_0-t)/2}
\]
is a bounded idempotent with range $\delta^{(t-t_0)/2}A^2_t(\Omega)$.
The orthogonal projection onto the same range is
\[
\delta^{(t-t_0)/2}P_t\delta^{(t_0-t)/2}.
\]
Indeed, for $f\in L^2_{t_0}(\Omega)$ and $h\in A^2_t(\Omega)$,
\begin{align*}
\left\langle
 f-\delta^{(t-t_0)/2}P_t(\delta^{(t_0-t)/2}f),
             \delta^{(t-t_0)/2}h\right\rangle_{t_0}
=
 \left\langle
 \delta^{(t_0-t)/2}f-P_t(\delta^{(t_0-t)/2}f),h
 \right\rangle_t=0.
\end{align*}
Lemmas~\ref{le:ra-conjugation}
and~\ref{le:ra-projection-identity} now give
\begin{equation}\label{ra:projection-identity}
\begin{aligned}
\delta^{(t-t_0)/2}P_t\delta^{(t_0-t)/2}
=
 \delta^{(t-t_0)/2}P_{t_0}\delta^{(t_0-t)/2}
 \cdot
 \left[
 \delta^{(t-t_0)/2}P_{t_0}\delta^{(t_0-t)/2}
 +\delta^{(t_0-t)/2}P_{t_0}\delta^{(t-t_0)/2}-\mathrm{Id}
 \right]^{-1}.
\end{aligned}
\end{equation}

Replace $t$ by $\tau$ on the right-hand side of
\eqref{ra:projection-identity}. We construct the inverse
of the operator in square brackets simultaneously on
$L^2_{t_0}(\Omega)$ and $L^2_{t_0-\eta_0/2}(\Omega)$.
For $\sigma\in\{0,-\eta_0/2\}$,
Lemma~\ref{le:ra-conjugation} shows that
\[
\tau\longmapsto
\delta^{\pm(\tau-t_0)/2}P_{t_0}\delta^{\mp(\tau-t_0)/2}
\]
is holomorphic on a neighborhood of
$\{|\tau-t_0|\le\eta_0/4\}$, with operator norm at most $C_0$
on $L^2_{t_0+\sigma}(\Omega)$.
Its value at $\tau=t_0$ is $P_{t_0}$.
Applying \eqref{ra:operator-coefficients} on the circle
$|\tau-t_0|=\eta_0/4$, we obtain
\begin{equation}\label{ra:operator-difference}
\left\|
\delta^{\pm(\tau-t_0)/2}P_{t_0}\delta^{\mp(\tau-t_0)/2}
-P_{t_0}
\right\|_{\mathcal L(L^2_{t_0+\sigma}(\Omega))}
\le
C_0\frac{|\tau-t_0|}{\eta_0/4-|\tau-t_0|},
\ \ \  |\tau-t_0|<\eta_0/4.
\end{equation}
At $\tau=t_0$, the corresponding bracket is
$2P_{t_0}-\mathrm{Id}$, and
\[
(2P_{t_0}-\mathrm{Id})^2=\mathrm{Id},\ \ \ 
\|2P_{t_0}-\mathrm{Id}\|_{\mathcal L(L^2_{t_0+\sigma}(\Omega))}\le2C_0+1.
\]
Choose, for example,
\begin{equation}\label{ra:radius-choice}
R=\frac{\eta_0}{64C_0(2C_0+1)}.
\end{equation}
For $|\tau-t_0|\le2R$, \eqref{ra:operator-difference} gives
\begin{align}
&\left\|
 \left[
 \delta^{(\tau-t_0)/2}P_{t_0}\delta^{(t_0-\tau)/2}
 +\delta^{(t_0-\tau)/2}P_{t_0}\delta^{(\tau-t_0)/2}-2P_{t_0}
 \right](2P_{t_0}-\mathrm{Id})
 \right\|_{\mathcal L(L^2_{t_0+\sigma}(\Omega))}\notag\\
&\quad\le
 \frac{4C_0(2C_0+1)R}{\eta_0/4-2R}<\frac12.
\label{ra:inverse-smallness}
\end{align}
Lemma~\ref{le:ra-inversion} proves that the inverse in
\eqref{ra:projection-identity} is holomorphic on both spaces,
with norm at most $2(2C_0+1)$.
Thus its product with the first factor has norm at most
$2C_0(2C_0+1)$.

By Lemma~\ref{le:ra-extension}, the conjugated factors agree on
$C_c^\infty(\Omega)$ in both weighted spaces, hence on
$L^2_{t_0-\eta_0/2}(\Omega)$ by density and boundedness.
The operators in square brackets therefore agree on that space.
For $u\in L^2_{t_0-\eta_0/2}(\Omega)$, the inverse computed there solves
the same equation in $L^2_{t_0}(\Omega)$. Uniqueness of the inverse on
$L^2_{t_0}(\Omega)$ shows that the two solutions coincide.
Thus the right-hand side of \eqref{ra:projection-identity}
is consistent on the two spaces $L^2_{t_0}(\Omega)$ and $L^2_{t_0-\eta_0/2}(\Omega)$.

For $|\tau-t_0|<2R$, the whole expression
$\delta^{(\tau-t_0)/2}P_\tau\delta^{(t_0-\tau)/2}$
will denote the holomorphic right-hand side of
\eqref{ra:projection-identity}, with $t$ replaced by $\tau$.
For $\tau=t\in\mathbb R$, it agrees with the original expression.
This convention does not define an orthogonal Bergman projection
for a complex-valued weight.

\subsection{Holomorphy in a fixed weighted space}

\begin{proof}[Proof of Proposition~\ref{prop:norm-analyticity}]
Fix $S\Subset\Omega$. Choose
\[
0<2\varepsilon_0<\operatorname{dist}(S,\partial\Omega),\ \ \ 
f\in C_c^\infty(B(0,\varepsilon_0)),\ \ \ 
f\ge0,\ \ \  \int f\,dV=1,
\]
where $f$ is real and radial. The mean-value property gives
\[
\int_\Omega h(\zeta)f(\zeta-w)\,dV(\zeta)=h(w)
\]
for every holomorphic or antiholomorphic $h$ and for $w$ near $S$.
Consequently, for real $t$,
\begin{equation}\label{ra:kernel-test}
P_t\bigl(\delta^{-t}f(\cdot-w)\bigr)(z)
=\int_\Omega K_t(z,\zeta)f(\zeta-w)\,dV(\zeta)
=K_t(z,w).
\end{equation}
For $|\tau-t_0|<2R$, define
\begin{equation}\label{eq:strong-kernel-identity}
K_\tau(\cdot,w)
:=\delta^{(t_0-\tau)/2}
\bigl(\delta^{(\tau-t_0)/2}P_\tau\delta^{(t_0-\tau)/2}\bigr)
\bigl(\delta^{-(\tau+t_0)/2}f(\cdot-w)\bigr).
\end{equation}
For $\tau=t\in\mathbb R$, this agrees with $K_t(\cdot,w)$
by \eqref{ra:kernel-test}. We use the consistent continuation of
\eqref{ra:projection-identity} on $L^2_{t_0-\eta_0/2}$.
The middle factor in \eqref{eq:strong-kernel-identity} is
holomorphic for $|\tau-t_0|<2R$ and has norm at most
$2C_0(2C_0+1)$ there. The last vector is entire in this space,
since $\log\delta$ is bounded on its support.

Set $\eta_1=\eta_0/4$. For the first factor,
\begin{align*}
\|\delta^{(t_0-\tau)/2}u\|_{t_0-\eta_1}^2
=\int_\Omega |u|^2\delta^{t_0-\eta_0/2}
 \delta^{\eta_1-(\operatorname{Re}\tau-t_0)}\,dV
\le\|u\|_{t_0-\eta_0/2}^2,
\ \ \  |\operatorname{Re}\tau-t_0|<\eta_1.
\end{align*}
For $\eta>0$ and $k\in\mathbb N_0$, we shall use
\begin{equation}\label{eq:elementary}
|\log\delta|^k\le k!\eta^{-k}\delta^{-\eta}.
\end{equation}
Fix $\tau_0$ in the preceding strip and put
$\eta_2=(\eta_1-\operatorname{Re}\tau_0+t_0)/2>0$.
By \eqref{eq:elementary},
\[
\left\|
\frac{1}{j!}\left(-\frac{\log\delta}{2}\right)^j
\delta^{(t_0-\tau_0)/2}u
\right\|_{t_0-\eta_1}
\le (2\eta_2)^{-j}\|u\|_{t_0-\eta_0/2},
\ \ \  j\ge0.
\]
Therefore, for $\tau=\tau_0+\Delta\tau$,
\[
\delta^{(t_0-\tau_0-\Delta\tau)/2}u
=\sum_{j=0}^\infty\frac{(\Delta\tau)^j}{j!}
 \left(-\frac{\log\delta}{2}\right)^j
 \delta^{(t_0-\tau_0)/2}u,
\ \ \  |\Delta\tau|<2\eta_2,
\]
with convergence in the operator norm from
$L^2_{t_0-\eta_0/2}(\Omega)$ to $L^2_{t_0-\eta_1}(\Omega)$.
This proves the required multiplier holomorphy.

Since $2R<\eta_1$, the right-hand side of
\eqref{eq:strong-kernel-identity} is holomorphic for
$|\tau-t_0|<2R$ with values in $L^2_{t_0-\eta_1}(\Omega)$, and
\begin{equation}\label{ra:strong-kernel-bound}
\sup_{\substack{|\tau-t_0|\le R\\w\in S}}
\|K_\tau(\cdot,w)\|_{t_0-\eta_1}
\le
2C_0(2C_0+1)
\sup_{\substack{|\tau-t_0|\le R\\w\in S}}
\|\delta^{-(\tau+t_0)/2}f(\cdot-w)\|_{t_0-\eta_0/2}
\le C_S.
\end{equation}
For $v\in(A^2_{t_0-\eta_1}(\Omega))^\perp$, the function
$\langle K_\tau(\cdot,w),v\rangle_{t_0-\eta_1}$ is holomorphic
and vanishes when $\tau=t\in\mathbb R$. The identity theorem implies
\[
K_\tau(\cdot,w)\in
(A^2_{t_0-\eta_1}(\Omega))^{\perp\perp}
=A^2_{t_0-\eta_1}(\Omega),\ \ \  |\tau-t_0|<2R.
\]
Here closedness follows from Lemma~\ref{le:ra-extension}.
Different choices of $f$ give the same continuation by the
identity theorem, since they agree when $\tau=t\in\mathbb R$.

For $t\in\mathbb R$ with $|t-t_0|\le R/2$, the Cauchy formula in
$A^2_{t_0-\eta_1}(\Omega)$ gives
\[
K_t^{(m)}(\cdot,w)
=\left.\partial_\tau^m K_\tau(\cdot,w)\right|_{\tau=t}
=\frac{m!}{2\pi i}\int_{|\tau-t|=R/2}
 \frac{K_\tau(\cdot,w)}{(\tau-t)^{m+1}}\,d\tau,
\ \ \  K_t^{(m)}:=\partial_t^mK_t.
\]
Together with \eqref{ra:strong-kernel-bound}, this proves
\eqref{eq:norm-analyticity}.
\end{proof}

\subsection{The kernel and its spatial derivatives}

\begin{proof}[Proof of the analyticity assertion in
Theorem~\ref{th:Gevrey-1}]
We use the continuation in
\eqref{eq:strong-kernel-identity} and the same constants
$\eta_1,R$. Let $\partial_z^\mu,\partial_w^\nu$ denote
derivatives in the real spatial coordinates.
Differentiation with respect to the translation parameter $w$
gives, in $L^2_{t_0-\eta_1}(\Omega)$,
\[
\partial_w^\nu K_\tau(\cdot,w)
=\delta^{(t_0-\tau)/2}
\bigl(\delta^{(\tau-t_0)/2}P_\tau\delta^{(t_0-\tau)/2}\bigr)
\bigl(\delta^{-(\tau+t_0)/2}\partial_w^\nu f(\cdot-w)\bigr).
\]
These functions are holomorphic in $\tau$, take values in
$A^2_{t_0-\eta_1}(\Omega)$, and satisfy
\[
\sup_{\substack{|\tau-t_0|\le R\\w\in S}}
\|\partial_w^\nu K_\tau(\cdot,w)\|_{t_0-\eta_1}
\le C_{S,\nu}.
\]
No derivative of $\delta$ occurs. The local mean-value and
Cauchy estimates give
\[
\sup_{z\in S}|\partial_z^\mu h(z)|
\le C_{S,\mu}\|h\|_{t_0-\eta_1},
\ \ \  h\in A^2_{t_0-\eta_1}(\Omega).
\]
Consequently,
\[
\sup_{\substack{|\tau-t_0|\le R\\z,w\in S}}
|\partial_z^\mu\partial_w^\nu K_\tau(z,w)|
\le C_{S,\mu,\nu}.
\]
Applying the Cauchy formula on $|\tau-t|=R/2$, we obtain
\begin{equation}\label{ra:local-factorial}
\sup_{\substack{t\in\mathbb R\\|t-t_0|\le R/2}}
\|K_t^{(m)}\|_{C^\ell(S\times S)}
\le C_{S,\ell}(2/R)^m m!,\ \ \  m\ge0.
\end{equation}
The same estimates give convergence of the Taylor series in
every $C^\ell(S\times S)$ norm.

Finally, cover a compact interval $J\Subset(-1,\infty)$ by
finitely many intervals
\[
J\subset\bigcup_{j=1}^N(t_j-R_j/2,t_j+R_j/2),
\ \ \  R_J=\frac12\min_{1\le j\le N}R_j>0.
\]
By \eqref{ra:local-factorial},
\[
\sup_{t\in J}\|K_t^{(m)}\|_{C^\ell(S\times S)}
\le\max_{1\le j\le N}C_{j,S,\ell}\,R_J^{-m}m!.
\]
This proves \eqref{eq:analytic-main}. By
\eqref{ra:radius-choice}, $R_J$ is independent of $S$ and $\ell$.
\end{proof}

\section{Derivative formulas for logarithmic boundary-distance weights}
\label{sec:logarithmic}

Throughout this section, $\Omega$ is bounded and pseudoconvex
with $C^2$ boundary. By a dilation, we may assume $0<\delta<1$.
We use the weighted-norm holomorphy established in
Proposition~\ref{prop:norm-analyticity} and follow the
Leibniz and composition argument of Section~3.

\subsection{Logarithmic multipliers}

Since $\varphi_t=-t\log\delta$, we have
\[
B_k(-\varphi_t)=(\log\delta)^k,\ \ \  k\ge1.
\]
Thus, for $\alpha=(\alpha_1,\ldots,\alpha_r)\in\Lambda_m$,
\[
P_{t,k}f:=P_t\bigl(-f(\log\delta)^k\bigr),
\ \ \ 
P_{t,\alpha}
:=P_{t,\alpha_r}\circ\cdots\circ P_{t,\alpha_1}.
\]
We also use the conventions
\[
\Lambda_0=\{\varnothing\},\ \ \ 
\varnothing!=1,\ \ \ 
P_{t,\varnothing}=\mathrm{Id}.
\]

Fix $t_0>-1$ and $S\Subset\Omega$. We use the constants
$\eta_0,C_0,\eta_1,R$ chosen in Section~\ref{sec:analyticity}.
The proof of Theorem~\ref{th:Openness}, with this choice of
$\eta_0$ from Theorem~\ref{th:Projection}, gives
\[
\sup_{\substack{|t-t_0|\le\eta_0/2\\w\in S}}
\|K_t(\cdot,w)\|_{t-\eta_0}^2\le\widetilde C_0.
\]

\begin{lemma}\label{le:Openness}
For $|t-t_0|\le\eta_0/2$, $0\le\sigma'<\sigma\le\eta_0$,
$f\in L^2_{t-\sigma}(\Omega)$, and
$\alpha\in\Lambda_{m,r}$,
\begin{equation}\label{eq:word-mapping}
\|P_{t,\alpha}f\|_{t-\sigma'}
\le C_0^{r/2}\sqrt{(2\alpha)!}
 \left(\frac{r}{\sigma-\sigma'}\right)^m
 \|f\|_{t-\sigma},
\end{equation}
where $(2\alpha)!:=\prod_{j=1}^r(2\alpha_j)!$.
In particular, for $w\in S$,
\begin{equation}\label{inq:gevrey-estimate-1}
\|P_{t,\alpha}(K_t(\cdot,w))\|_{t-\eta_0/2}^2
\le\widetilde C_0(2\alpha)!(mR_1)^{2m},
\ \ \  R_1:=2\eta_0^{-1}\sqrt{C_0}.
\end{equation}
All these compositions are well defined.
\end{lemma}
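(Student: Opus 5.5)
The estimate \eqref{eq:word-mapping} will follow by factoring $P_{t,\alpha}$ into its $r$ single-letter factors $P_{t,\alpha_j}$ and giving each factor an equal share of the weight loss $\sigma-\sigma'$. Each logarithmic multiplier costs a power of $\delta$ through \eqref{eq:elementary}. Each projection is then controlled on the slightly heavier weighted space by Theorem~\ref{th:Projection}.

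\emph{Single-letter estimate.} Fix $0\le a<b\le\eta_0$, set $\varepsilon=b-a$, and let $g\in L^2_{t-b}(\Omega)$. Apply \eqref{eq:elementary} with exponent $2k$ and $\eta=\varepsilon$:
\[
\int_\Omega|g|^2|\log\delta|^{2k}\delta^{t-a}
\le(2k)!\,\varepsilon^{-2k}\int_\Omega|g|^2\delta^{t-b}.
\]
Hence
\[
\|g(\log\delta)^k\|_{t-a}\le\sqrt{(2k)!}\,\varepsilon^{-k}\|g\|_{t-b}.
\]
In particular $g(\log\delta)^k\in L^2_{t-a}(\Omega)\subset L^2_t(\Omega)$, so $P_t$ may be applied to it.

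Since $|t-t_0|\le\eta_0/2$ lies inside the parameter range of Theorem~\ref{th:Projection}, that theorem applies with $\eta=a\in[0,\eta_0]$. Its density extension to $L^2_{t-a}(\Omega)$, which coincides with the original $P_t$, gives
\[
\|P_t u\|_{t-a}\le\sqrt{C_0}\,\|u\|_{t-a}.
\]
The case $a=0$ is just contractivity. Combining the two bounds,
\[
\|P_{t,k}g\|_{t-a}\le\sqrt{C_0}\,\sqrt{(2k)!}\,(b-a)^{-k}\|g\|_{t-b}.
\]

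\emph{Iteration.} Put $\varepsilon=(\sigma-\sigma')/r$ and define the intermediate exponents $s_j=\sigma-j\varepsilon$ for $j=0,\dots,r$. Then $s_0=\sigma$, $s_r=\sigma'$, and every $s_j$ lies in $[0,\eta_0]$.

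Apply the single-letter estimate to $P_{t,\alpha_j}$ as a map $L^2_{t-s_{j-1}}\to L^2_{t-s_j}$, for $j=1,\dots,r$ in order. Each intermediate output lies in $L^2_{t-s_j}\subset L^2_t$, which shows that all the compositions are well defined. Multiplying the $r$ bounds gives
\[
\|P_{t,\alpha}f\|_{t-\sigma'}
\le C_0^{r/2}\prod_{j=1}^r\sqrt{(2\alpha_j)!}\,\varepsilon^{-\alpha_j}\|f\|_{t-\sigma}.
\]
Since $\sum_j\alpha_j=m$, the product of the $\varepsilon^{-\alpha_j}$ equals $\varepsilon^{-m}=(r/(\sigma-\sigma'))^m$, and the right-hand side is exactly \eqref{eq:word-mapping}.

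\emph{Kernel bound.} For \eqref{inq:gevrey-estimate-1}, take $f=K_t(\cdot,w)$, $\sigma=\eta_0$ and $\sigma'=\eta_0/2$. By the bound recalled before the lemma, $\|f\|^2_{t-\eta_0}\le\widetilde C_0$ for $w\in S$. Squaring \eqref{eq:word-mapping} gives the factor $C_0^{r}(2\alpha)!\,(2r/\eta_0)^{2m}$. Using $r\le m$ and $C_0\ge1$,
\[
C_0^{r}(2r/\eta_0)^{2m}\le C_0^{m}(2m/\eta_0)^{2m}=(mR_1)^{2m},
\]
with $R_1=2\eta_0^{-1}\sqrt{C_0}$.

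The only delicate point is to keep every intermediate exponent inside the window $[t-\eta_0,t]$ where Theorem~\ref{th:Projection} is available. Splitting the loss $\sigma-\sigma'$ evenly into $r$ steps achieves this. This splitting is also the source of the factor $r^m$, and hence of the non-sharp factor $m^{2m}$ in \eqref{inq:gevrey-estimate-1}.
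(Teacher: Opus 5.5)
Your proposal is correct and follows the paper's proof essentially step for step. Like the paper, you combine \eqref{eq:elementary} with Theorem~\ref{th:Projection} to get a single-letter bound, iterate it along the equally spaced exponents $t-\sigma,\,t-\sigma+\eta,\ldots,t-\sigma'$ with $\eta=(\sigma-\sigma')/r$, and then specialize to $\sigma=\eta_0$, $\sigma'=\eta_0/2$ using $r\le m$ and $C_0\ge1$. Note that your inclusion $L^2_{t-a}(\Omega)\subset L^2_t(\Omega)$ depends on the normalization $0<\delta<1$, which the paper fixes by dilation at the start of that section.
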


\begin{proof}
For $0<\eta\le\sigma\le\eta_0$, Theorem~\ref{th:Projection}
and \eqref{eq:elementary} give
\begin{align*}
\|P_{t,k}f\|_{t-\sigma+\eta}^2
&\le C_0\int_\Omega
 |f|^2|\log\delta|^{2k}\delta^{t-\sigma+\eta}\,dV\\
&\le C_0(2k)!\eta^{-2k}\|f\|_{t-\sigma}^2.
\end{align*}
The multiplier belongs to the weighted space on which the
projection is applied. Take $\eta=(\sigma-\sigma')/r$ and
apply this estimate successively with
$k=\alpha_1,\ldots,\alpha_r$, using the exponents
\[
t-\sigma,\ t-\sigma+\eta,\ \ldots,\ t-\sigma+r\eta=t-\sigma'.
\]
Multiplication of the $r$ bounds proves \eqref{eq:word-mapping}.
With $\sigma=\eta_0$ and $\sigma'=\eta_0/2$, it follows that
\begin{align*}
\|P_{t,\alpha}(K_t(\cdot,w))\|_{t-\eta_0/2}^2
&\le C_0^r(2\alpha)!
 \left(\frac{2r}{\eta_0}\right)^{2m}
 \|K_t(\cdot,w)\|_{t-\eta_0}^2\\
&\le\widetilde C_0(2\alpha)!(mR_1)^{2m},
\end{align*}
since $r\le m$ and $C_0\ge1$.
\end{proof}

We also use the following analogue of
Lemma~\ref{le:commutation-1}.
\begin{lemma}\label{le:commutation}
Let $\chi_V$ be the characteristic function of a measurable
set $V\Subset\Omega$, and let $f\in A^2_t(\Omega)$.
For $\alpha\in\Lambda_m$, $\gamma\in\mathbb N_+$, and
$|t-t_0|\le\eta_0/2$,
\[
\int_\Omega\chi_V f\,
\overline{P_{t,\alpha}(K_t(\cdot,z))}
(\log\delta)^\gamma\delta^t\,dV
=-P_{t,\alpha^-}P_{t,\gamma}(\chi_V f)(z),
\]
where $\alpha^-=(\alpha_r,\ldots,\alpha_1)$.
The identity also holds for the empty composition.
\end{lemma}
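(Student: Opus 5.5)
The plan is to repeat the proof of Lemma~\ref{le:commutation-1}, replacing the $L^\infty$ bound on the Bell multipliers by the weighted mapping bounds of Lemma~\ref{le:Openness}. The one new point is that $(\log\delta)^\gamma$ is unbounded. Every pairing must therefore be shown finite, and every transfer of $P_t$ from one side to the other must be justified in $\langle\cdot,\cdot\rangle_t$.

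\emph{Step 1: the first transfer.} Since $V\Subset\Omega$, the function $\chi_V f(\log\delta)^\gamma$ is bounded with compact support. It therefore lies in $L^2_{t-\sigma}(\Omega)$ for every $\sigma\le\eta_0$, and in particular in $L^2_t(\Omega)$. Put $h:=P_{t,\alpha}(K_t(\cdot,z))$. Lemma~\ref{le:Openness} shows that $h$ is well defined and lies in $A^2_{t-\eta_0/2}(\Omega)\subset A^2_t(\Omega)$; holomorphy holds because $h$ is in the range of $P_t$, and the inclusion holds because $\delta<1$. The left-hand side equals $\langle \chi_Vf(\log\delta)^\gamma,h\rangle_t$, which is finite. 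Since $P_th=h$ and $P_t$ is selfadjoint on $L^2_t$, this pairing equals
\[
\langle P_t(\chi_Vf(\log\delta)^\gamma),h\rangle_t=-\langle P_{t,\gamma}(\chi_Vf),h\rangle_t .
\]

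\emph{Step 2: symmetry of $P_{t,k}$ on suitable holomorphic functions.} For $u,v$ holomorphic, I claim
\[
\langle P_{t,k}u,v\rangle_t=-\langle u(\log\delta)^k,v\rangle_t=\langle u,P_{t,k}v\rangle_t ,
\]
provided $u,v\in A^2_{t-\sigma}(\Omega)$ for some $\sigma>0$. The hypothesis on $\sigma$ is what makes $u(\log\delta)^k\bar v\,\delta^t$ integrable. By \eqref{eq:elementary} with $\eta=\sigma$,
\[
|u(\log\delta)^kv|\delta^t\le k!\,\sigma^{-k}|u||v|\delta^{t-\sigma},
\]
and the right side is integrable by Cauchy--Schwarz in $L^2_{t-\sigma}$. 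Moreover $u(\log\delta)^k\in L^2_{t-\sigma/2}\subset L^2_t$, so $P_t$ applies to it and the first equality follows from selfadjointness.

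\emph{Step 3: iteration, where the exponents must be tracked.} We transfer the factors of $h=P_{t,\alpha_r}\cdots P_{t,\alpha_1}K_t(\cdot,z)$ one at a time onto the left argument. The left argument starts as $P_{t,\gamma}(\chi_Vf)$, which lies in $A^2_{t-\eta_0}$ by Theorem~\ref{th:Projection}, because its input has compact support. By the mapping bound \eqref{eq:word-mapping}, each subsequent composition loses only a fraction of the exponent. Concretely, split $[0,\eta_0]$ into $r+1$ equal pieces, and let the left chain consume them downward from $\eta_0$ while the right chain starts at $K_t(\cdot,z)\in A^2_{t-\eta_0}$. This bookkeeping is the main obstacle: at each transfer both arguments must lie in some $A^2_{t-\sigma}$ with $\sigma>0$. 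I would handle it by choosing the margins so that every intermediate function lies in $A^2_{t-\eta_0/(2r+2)}$. In the worst case the partial compositions on both sides involve at most $r+1$ operators in total, each losing $\eta_0/(2r+2)$ from a starting margin of $\eta_0$, so all intermediate functions keep a positive margin.

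\emph{Step 4: conclusion.} After all $r$ transfers the expression reads $-\langle P_{t,\alpha^-}P_{t,\gamma}(\chi_Vf),K_t(\cdot,z)\rangle_t$. The reproducing property in $A^2_t$ evaluates it at $z$, which gives the stated identity. For the empty composition ($r=0$), Step~1 followed directly by the reproducing property gives the result.
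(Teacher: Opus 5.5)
Your proposal is correct and follows essentially the same route as the paper. The first transfer uses $P_th=h$ and the selfadjointness of $P_t$ on $L^2_t$. Each later transfer uses the symmetry $\langle P_{t,j}u,v\rangle_t=-\langle(\log\delta)^ju,v\rangle_t=\langle u,P_{t,j}v\rangle_t$, which \eqref{eq:elementary} justifies once every partial composition is known, via Theorem~\ref{th:Projection}, Theorem~\ref{th:Openness} and \eqref{eq:word-mapping}, to lie in some $L^2_{t-\sigma}(\Omega)$ with $\sigma>0$. The only difference is that your Step~3 bookkeeping is more elaborate than needed: the two chains do not share a margin budget, and the paper simply applies \eqref{eq:word-mapping} with $\sigma=\eta_0$, $\sigma'=\eta_0/2$ to each partial word on each side, which places every intermediate function in $L^2_{t-\eta_0/2}(\Omega)$.
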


\begin{proof}
If $u,v\in A^2_t(\Omega)$ and
$(\log\delta)^ju,(\log\delta)^jv\in L^2_t(\Omega)$, then
\[
\langle P_{t,j}u,v\rangle_t
=-\langle(\log\delta)^ju,v\rangle_t
=\langle u,P_{t,j}v\rangle_t.
\]
The function $\chi_V f$ belongs to $L^2_{t-\eta_0}(\Omega)$.
By \eqref{eq:word-mapping} and Theorem~\ref{th:Openness},
all partial compositions below belong to $L^2_{t-\eta_0/2}(\Omega)$.
Moreover, \eqref{eq:elementary} gives
\[
\|(\log\delta)^j u\|_t^2
\le(2j)!(\eta_0/2)^{-2j}\|u\|_{t-\eta_0/2}^2.
\]
Thus the preceding pairing identity applies at each transfer,
and orthogonality gives
\begin{align*}
&\left\langle\chi_V f(\log\delta)^\gamma,
 P_{t,\alpha}(K_t(\cdot,z))\right\rangle_t\\
&\quad=-\left\langle P_{t,\gamma}(\chi_V f),
 P_{t,\alpha}(K_t(\cdot,z))\right\rangle_t\\
&\quad=-\left\langle P_{t,\alpha^-}P_{t,\gamma}(\chi_V f),
 K_t(\cdot,z)\right\rangle_t\\
&\quad=-P_{t,\alpha^-}P_{t,\gamma}(\chi_V f)(z).
\end{align*}
\end{proof}

\subsection{The higher derivative formula}

\begin{theorem}\label{th:formula}
Let $\Omega$ be a bounded pseudoconvex domain with $C^2$ boundary.
For $t>-1$, $z,w\in\Omega$, and $m\ge1$,
\begin{equation}\label{fo:derivation}
K_t^{(m)}(z,w)
=\sum_{\alpha\in\Lambda_m}\frac{m!}{\alpha!}
 P_{t,\alpha}\bigl(K_t(\cdot,w)\bigr)(z).
\end{equation}
\end{theorem}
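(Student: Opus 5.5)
The plan is to repeat the induction of Section~3 for the weights $\varphi_t=-t\log\delta$. In Section~3 the key analytic input was that the Bell multipliers are bounded; here that input is replaced by two facts already established. First, Proposition~\ref{prop:norm-analyticity} shows that $t\mapsto K_t(\cdot,w)$ is holomorphic, hence $C^\infty$ in norm, with values in the fixed space $A^2_{t_0-\eta_1}(\Omega)$. Second, Lemma~\ref{le:Openness} and \eqref{eq:elementary} allow the unbounded multipliers $(\log\delta)^k$ to be absorbed by losing a small power of $\delta$.

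Fix $t_0>-1$ and $z,w$. After enlarging $S$, we may assume $z,w\in S$.

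\textbf{Step 1 (Leibniz formula).} For real $t$ near $t_0$, write
\[
K_t(z,w)=\langle K_t(\cdot,w),K_t(\cdot,z)\rangle_t=\int_\Omega K_t(\cdot,w)\overline{K_t(\cdot,z)}\,\delta^t\,dV .
\]
I would differentiate this $m$ times under the integral sign, with the parameter appearing in three places:
\begin{itemize}
\item in $K_t(\cdot,w)$, where the $t$-derivatives exist in $L^2_{t_0-\eta_1}$;
\item in $\overline{K_t(\cdot,z)}$, handled the same way;
\item in the density, where $\partial_t^k\delta^t=(\log\delta)^k\delta^t$.
\end{itemize}
To justify this, pair the factors via Cauchy--Schwarz in $L^2_{t_0-\eta_1}$. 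Use $\delta^{t}\le \delta^{t_0-\eta_1}\delta^{\eta_1-(t-t_0)}$ and absorb $|\log\delta|^k\delta^{\eta_1/2}$ through \eqref{eq:elementary}. This requires $|t-t_0|<\eta_1/2$, which holds after shrinking $R$.

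Holomorphy of the integrand families in norm then lets the difference quotients converge. Note that here the paper's $\mathcal R$-remainder bookkeeping is unnecessary, since we already have norm differentiability from the holomorphic extension. The outcome is
\[
K_t^{(m)}(z,w)=\sum_{m_1+m_2+m_3=m}\frac{m!}{m_1!m_2!m_3!}\int_\Omega (\log\delta)^{m_3}K_t^{(m_1)}(\cdot,w)\overline{K_t^{(m_2)}(\cdot,z)}\,\delta^t .
\]
The term with $m_3=0$ and $m_1=m$ is $\langle K^{(m)}_t(\cdot,w),K_t(\cdot,z)\rangle_t=K_t^{(m)}(z,w)$, by the reproducing property in $A^2_t$. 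To make this legitimate I must show $K_t^{(m)}(\cdot,w)\in A^2_t$; this follows from $A^2_{t_0-\eta_1}\subset A^2_t$ for $t\ge t_0-\eta_1$. Moving that term to the other side gives the analogue of \eqref{fo:Leibniz-1}: a recursion that expresses $K^{(m)}$ through lower-order derivatives, with $m_3\ge1$.

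\textbf{Step 2 (induction on $m$).} I would prove \eqref{fo:derivation} by induction on $m$ as follows.
\begin{itemize}
\item Insert the induction hypothesis $K_t^{(m_i)}(\cdot,\cdot)=\sum_{\alpha}\frac{m_i!}{\alpha!}P_{t,\alpha}K_t$ for $m_1,m_2<m$. The identity holds in $L^2_{t-\eta_0/2}$, because both sides are holomorphic functions given by the pointwise formula, and Lemma~\ref{le:Openness} places the right-hand side in that space.
\item Truncate with $\chi_k$ exactly as in Section~3.
\item Apply Lemma~\ref{le:commutation} to turn each integral into $-P_{t,\beta^-}P_{t,m_3}(\chi_kP_{t,\alpha}K_t(\cdot,w))(z)$.
\item Let $k\to\infty$ using \eqref{eq:word-mapping}. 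Since $\chi_k P_{t,\alpha}K_t\to P_{t,\alpha}K_t$ in $L^2_{t-\eta_0/2}$, the images converge in $L^2_{t-\sigma'}$ for some $\sigma'<\eta_0/2$, and point evaluation at $z$ is continuous there.
\end{itemize}
The combinatorial regrouping over a distinguished position $j$ is then verbatim from Section~3. It yields the coefficient $\frac{(m-1)!|\gamma|}{\gamma!}=\frac{m!}{\gamma!}$, where the factor $m_3$ and the normalisation are arranged as in \eqref{eq:F-alpha}.

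\textbf{Main obstacle.} I expect the hardest part to be the justification in Step 1: differentiating the pairing $\langle\cdot,\cdot\rangle_t$ when the weight itself varies with an unbounded logarithmic derivative. One must check that every product $(\log\delta)^{m_3}K^{(m_1)}_t\overline{K^{(m_2)}_t}\delta^t$ is dominated, locally uniformly in $t$, by an integrable function. I would first try splitting $\delta^t=\delta^{(t_0-\eta_1)/2}\delta^{(t_0-\eta_1)/2}\delta^{t-t_0+\eta_1}$ and then using \eqref{eq:elementary} with $\eta=\eta_1/2$. The norm bounds \eqref{eq:norm-analyticity} give the uniformity. Since the identity holds for $t$ near each $t_0$ and $t_0>-1$ is arbitrary, this proves the theorem for all $t>-1$.
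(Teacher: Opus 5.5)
\textbf{There is a genuine gap in the recursion of Step~1, and Step~2 depends on it.} The analytic part of your plan matches the paper: dominated differentiation with $\delta^{t-t_0+\eta_1}|\log\delta|^q$ absorbed by \eqref{eq:elementary}, the cutoffs $\chi_k$, Lemma~\ref{le:commutation}, and the limit via \eqref{eq:word-mapping}. The trouble comes when you differentiate $K_t(z,w)=\langle K_t(\cdot,w),K_t(\cdot,z)\rangle_t$ $m$ times. Moving the $(m,0,0)$ term across cancels the left-hand side entirely. The $(0,m,0)$ term also equals $K_t^{(m)}(z,w)$: it is $\overline{K_t^{(m)}(w,z)}$, and $K_t(w,z)=\overline{K_t(z,w)}$ for real $t$.

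After removing both extreme terms, the remainder still contains the $m_3=0$ cross terms $\frac{m!}{m_1!m_2!}\langle K_t^{(m_1)}(\cdot,w),K_t^{(m_2)}(\cdot,z)\rangle_t$ with $1\le m_1\le m-1$. So the recursion does not have $m_3\ge1$. These cross terms are not covered by Lemma~\ref{le:commutation}, since there is no $P_{t,0}$. Moreover, all coefficients are $\frac{m!}{m_1!m_2!m_3!}$, so the factor $m_3$ and the $(m-1)!$ you import from \eqref{eq:F-alpha} are not present. If you regroup only the $m_3\ge1$ terms, each $\gamma\in\Lambda_{m,r}$ receives $r\,m!/\gamma!$. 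For $m=2$ the coefficient of $P_{t,(1,1)}(K_t(\cdot,w))(z)$ would be $4$ instead of $2$.

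\textbf{Two repairs.}

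\emph{The paper's route.} Differentiate once before expanding. From \eqref{eq:key} at $t=t_0$ (or your identity with $m=1$, once both extreme terms are recognized), obtain $K_t^{(1)}(z,w)=-\int_\Omega K_t(\cdot,w)\overline{K_t(\cdot,z)}\log\delta\,\delta^t\,dV$. Only then differentiate $m-1$ times. This yields \eqref{fo:Leibniz}, where every term carries $(\log\delta)^{m_3+1}$ with coefficient $\frac{(m-1)!}{m_1!m_2!m_3!}$. The shift $m_3+1\mapsto m_3$ produces the factor $m_3$, and $\sum_j\gamma_j=m$ then gives $m!/\gamma!$.

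\emph{Keeping your $m$-fold identity.} Evaluate the cross terms with the induction hypothesis and $\langle P_{t,\alpha}u,P_{t,\beta}v\rangle_t=\langle P_{t,\beta^-}P_{t,\alpha}u,v\rangle_t$. Each of the $r-1$ splittings of $\gamma$ into a nonempty prefix and suffix contributes $-m!/\gamma!$. Each of the $r$ distinguished positions contributes $+m!/\gamma!$. The total is again $m!/\gamma!$, an inclusion--exclusion count in place of the paper's $\sum_j\gamma_j=|\gamma|$.

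Either way, the regrouping step has to be redone; it is not verbatim from Section~3 for the recursion you wrote.
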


\begin{proof}
Write $K_t^{(j)}=\partial_t^jK_t$.
By Proposition~\ref{prop:norm-analyticity}, the holomorphic
extension $\tau\mapsto K_\tau(\cdot,w)$ gives
\[
K_t^{(j)}(\cdot,w)
=\left.\partial_\tau^jK_\tau(\cdot,w)\right|_{\tau=t}
\in A^2_{t_0-\eta_1}(\Omega),\ \ \  j\ge0,
\]
for $t\in\mathbb R$ with $|t-t_0|<R$. We restrict the real parameter to
$|t-t_0|<R/2$.
For $f,g\in L^2_{t_0-\eta_1}(\Omega)$ and $q\in\mathbb N_0$,
\begin{align}
\int_\Omega |fg|\,|\log\delta|^q\delta^t\,dV
&\le\|\delta^{t-t_0+\eta_1}|\log\delta|^q\|_{L^\infty}
 \|f\|_{t_0-\eta_1}\|g\|_{t_0-\eta_1}\notag\\
&\le q!(2/\eta_1)^q
 \|f\|_{t_0-\eta_1}\|g\|_{t_0-\eta_1}.
\label{eq:log-bilinear}
\end{align}
Here $R<\eta_1$ and \eqref{eq:elementary} are used.
The same estimate, with $q$ replaced by $q+j$, applies to
\[
\partial_t^j\bigl(\delta^{t-t_0+\eta_1}(\log\delta)^q\bigr)
=\delta^{t-t_0+\eta_1}(\log\delta)^{q+j}.
\]
Taylor's theorem and the preceding bounds show that
$t\mapsto\delta^{t-t_0+\eta_1}(\log\delta)^q=:M_q(t)$ is $C^\infty$
with values in $L^\infty(\Omega)$ for $|t-t_0|<R/2$.
Proposition~\ref{prop:norm-analyticity} gives smoothness of the
kernel sections in the fixed space $L^2_{t_0-\eta_1}(\Omega)$.
The estimate
\[
\left|\int_\Omega f\overline g M_q(t)\,
                   \delta^{t_0-\eta_1}\,dV\right|
\le\|M_q(t)\|_{L^\infty}\|f\|_{t_0-\eta_1}\|g\|_{t_0-\eta_1}
\]
therefore justifies repeated Leibniz differentiation in all
the integrals below.
The reproducing property gives
\begin{equation}\label{eq:key}
K_t(z,w)-K_{t_0}(z,w)
=\int_\Omega K_t(\zeta,w)\overline{K_{t_0}(\zeta,z)}
 \bigl(\delta(\zeta)^{t_0}-\delta(\zeta)^t\bigr)\,dV(\zeta).
\end{equation}
Indeed, both kernel sections belong to
$A^2_{t_0-\eta_1}(\Omega)\subset A^2_{t_0}(\Omega)\cap A^2_t(\Omega)$.
Differentiating \eqref{eq:key} at $t=t_0$, and then using
the arbitrariness of $t_0$, gives
\begin{align}
K_t^{(1)}(z,w)
&=-\int_\Omega K_t(\zeta,w)\overline{K_t(\zeta,z)}
 (\log\delta(\zeta))\delta(\zeta)^t\,dV(\zeta)\notag\\
&=P_{t,1}\bigl(K_t(\cdot,w)\bigr)(z).
\label{eq:C^1}
\end{align}
Differentiating this identity $m-1$ times yields
\begin{align}
K_t^{(m)}(z,w)
=-\!\!\sum_{\substack{m_1+m_2+m_3=m-1\\m_1,m_2,m_3\ge0}}
&\frac{(m-1)!}{m_1!m_2!m_3!}
\int_\Omega K_t^{(m_1)}(\zeta,w)
 \overline{K_t^{(m_2)}(\zeta,z)}\notag\\
&{}\cdot(\log\delta(\zeta))^{m_3+1}
 \delta(\zeta)^t\,dV(\zeta).
\label{fo:Leibniz}
\end{align}

It remains to prove \eqref{fo:derivation} by induction.
The case $m=1$ is \eqref{eq:C^1}.
Fix $p\ge1$ and assume the formula for all $m\le p$,
including $m=0$ with the empty composition.
Choose $\Omega_k\Subset\Omega$ increasing to $\Omega$,
and let $\chi_k$ be its characteristic function. Define
\[
I(z,w;k):=-\int_\Omega
\chi_k K_t(\cdot,w)\overline{K_t(\cdot,z)}
(\log\delta)\delta^t\,dV.
\]
By Leibniz's formula and the induction hypothesis,
\begin{align*}
\partial_t^p I(z,w;k)
=-\!\!\sum_{\substack{m_1+m_2+m_3=p\\m_1,m_2,m_3\ge0}}
&\frac{p!}{m_1!m_2!m_3!}
\int_\Omega\chi_k K_t^{(m_1)}(\cdot,w)
 \overline{K_t^{(m_2)}(\cdot,z)}
 (\log\delta)^{m_3+1}\delta^t\,dV\\
=-\sum&\frac{p!\,m_3}{\alpha!\beta!m_3!}
\int_\Omega\chi_k P_{t,\alpha}(K_t(\cdot,w))
 \overline{P_{t,\beta}(K_t(\cdot,z))}
 (\log\delta)^{m_3}\delta^t\,dV\\
=\sum&\frac{p!\,m_3}{\alpha!\beta!m_3!}
P_{t,\beta^-}P_{t,m_3}
 \bigl(\chi_kP_{t,\alpha}(K_t(\cdot,w))\bigr)(z).
\end{align*}
In the last two sums,
\[
m_1,m_2\ge0,\ \ \  m_3\ge1,\ \ \ 
m_1+m_2+m_3=p+1,\ \ \ 
\alpha\in\Lambda_{m_1},\quad\beta\in\Lambda_{m_2}.
\]
The last equality follows from Lemma~\ref{le:commutation}.
The coefficient is
\[
\frac{p!}{m_1!m_2!(m_3-1)!}
\frac{m_1!}{\alpha!}\frac{m_2!}{\beta!}
=\frac{p!\,m_3}{\alpha!\beta!m_3!}.
\]

We next remove the cutoffs. By \eqref{fo:Leibniz} and
\eqref{eq:log-bilinear},
\begin{align*}
&|\partial_t^p I(z,w;k)-K_t^{(p+1)}(z,w)|\\
&\quad\le C_p\!\!\sum_{\substack{m_1+m_2+m_3=p\\m_1,m_2,m_3\ge0}}
\|(1-\chi_k)K_t^{(m_1)}(\cdot,w)\|_{t_0-\eta_1}
\|K_t^{(m_2)}(\cdot,z)\|_{t_0-\eta_1}
\longrightarrow0.
\end{align*}
On the other hand, \eqref{eq:word-mapping}, with
$\sigma=\eta_0/2$ and $\sigma'=\eta_0/4$, gives
\begin{align*}
&\left\|P_{t,\beta^-}P_{t,m_3}
 \bigl((\chi_k-1)P_{t,\alpha}(K_t(\cdot,w))\bigr)
 \right\|_{t-\eta_0/4}\\
&\quad\le C_{p,\alpha,\beta,m_3}
 \|(\chi_k-1)P_{t,\alpha}(K_t(\cdot,w))\|_{t-\eta_0/2}
\longrightarrow0.
\end{align*}

For the first cutoff limit above, each
$K_t^{(m_1)}(\cdot,w)$ belongs to the fixed space $L^2_{t_0-\eta_1}(\Omega)$;
since $|1-\chi_k|\le1$ and $\chi_k\to1$ pointwise, dominated
convergence gives
$\|(1-\chi_k)K_t^{(m_1)}(\cdot,w)\|_{t_0-\eta_1}\to0$.
Equation~\eqref{eq:log-bilinear} then controls the corresponding
integrals. For the second limit, Lemma~\ref{le:Openness} gives
$P_{t,\alpha}(K_t(\cdot,w))\in L^2_{t-\eta_0/2}(\Omega)$ when
$\alpha\ne\varnothing$; for $\alpha=\varnothing$ the same
membership follows from Theorem~\ref{th:Openness}.
Dominated convergence again makes the right-hand norm above tend
to zero. The bounded operator estimate \eqref{eq:word-mapping}
places the resulting compositions in $A^2_{t-\eta_0/4}(\Omega)$, where
point evaluation at $z$ is continuous. We may therefore pass to
the limit in each term of the finite sum and obtain
\begin{equation}\label{eq:log-F-alpha}
K_t^{(p+1)}(z,w)
=\sum\frac{p!\,m_3}{\alpha!\beta!m_3!}
P_{t,\beta^-}P_{t,m_3}P_{t,\alpha}
 \bigl(K_t(\cdot,w)\bigr)(z).
\end{equation}

We collect the terms in \eqref{eq:log-F-alpha} corresponding
to the same ordered composition. Fix
$\gamma=(\gamma_1,\ldots,\gamma_r)\in\Lambda_{p+1,r}$.
For $j=1,\ldots,r$, take
\[
m_3=\gamma_j,\ \ \ 
\alpha=(\gamma_1,\ldots,\gamma_{j-1}),\ \ \ 
\beta=(\gamma_r,\ldots,\gamma_{j+1}),
\]
with the convention that an empty multi-index is omitted.
Then
\[
P_{t,\beta^-}P_{t,m_3}P_{t,\alpha}=P_{t,\gamma},
\ \ \  \alpha!\beta!m_3!=\gamma!,
\]
and the corresponding coefficient is $p!\gamma_j/\gamma!$.
Conversely, every indexed term in \eqref{eq:log-F-alpha}
is obtained by choosing such a distinguished position $j$.
The total coefficient is therefore
\[
\sum_{j=1}^r\frac{p!\gamma_j}{\gamma!}
=\frac{p!|\gamma|}{\gamma!}
=\frac{(p+1)!}{\gamma!}.
\]
Consequently,
\[
K_t^{(p+1)}(z,w)
=\sum_{\gamma\in\Lambda_{p+1}}
\frac{(p+1)!}{\gamma!}
P_{t,\gamma}\bigl(K_t(\cdot,w)\bigr)(z).
\]
This proves \eqref{fo:derivation} for $m=p+1$ and completes
the induction.
\end{proof}

\section*{Acknowledgements}
The second author thanks Professor Bo-Yong Chen for helpful
discussions and many valuable suggestions.

\section*{Declaration of generative AI and AI-assisted technologies
in the manuscript preparation process}

During the preparation of this work, the authors used ChatGPT (OpenAI) 
to edit the text and develop the argument of complexification, 
which establishes real analyticity in the parameter. 
This improves upon our previously obtained Gevrey-2 regularity result. 
The authors reviewed and edited all AI-generated content, 
and take full responsibility
for the content of this article.

\end{document}